\newtheorem{lemma}{Lemma}[section]
\newtheorem{theorem}[lemma]{Theorem}
\newtheorem{corollary}[lemma]{Corollary}
\newtheorem{proposition}[lemma]{Proposition}
\newcommand{\rev}{{\operatorname{rev}}}
\newcommand{\F}{\mathcal F}
\renewcommand{\th}{\operatorname{th}}
\newcommand{\pf}{\mathcal F}
\newcommand{\Hom}{\operatorname{Hom}}
\newcommand{\Homneg}{\operatorname{Hom}_{\leq 0}}
\newcommand{\Ext}{\operatorname{Ext}}
\newcommand{\End}{\operatorname{End}}
\renewcommand{\ker}{\operatorname{ker}}
\newcommand{\coker}{\operatorname{coker}}
\newcommand{\add}{\operatorname{add}}
\newcommand{\module}{\operatorname{mod}}
\newcommand{\pt}{\mathcal T}
\newcommand{\T}{\mathcal T}
\newcommand{\U}{\mathcal U}
\newcommand{\C}{\mathcal C}
\newcommand{\D}{\mathcal D}
\newcommand{\Y}{\mathcal Y}
\newcommand{\E}{\mathcal E}
\newcommand{\isom}{\simeq}
\newcommand{\dmone}{\D^{\geq 1}_{\leq m}}
\newcommand{\dmoneplus}{\D^{(\geq 1)+}_{\leq m}}
\newcommand{\dmzero}{\D^{\geq 0}_{\leq m}}
\newcommand{\dmzerominus}{\D^{(\geq 0)-}_{\leq m}}
\newcommand{\dzero}{\D^{\geq 0}}
\newcommand{\dm}{\D_{\leq m}}
\newcommand{\donezerominus}{\D^{(\geq 0)-}_{\leq 1}}
\renewcommand{\mod}{\operatorname{mod}}
\newcommand{\stab}{\operatorname{Stab}}
\renewcommand{\mod}{\operatorname{mod}}
\newcommand{\Sub}{\operatorname{Sub}}
\newcommand{\Fac}{\operatorname{Fac}}
\begin{document}

\title[Exceptional sequences]{From $m$-clusters to 
$m$-noncrossing partitions via exceptional sequences}
\author[Buan]{Aslak Bakke Buan}
\address{Institutt for matematiske fag\\
Norges teknisk-naturvitenskapelige universitet\\
N-7491 Trondheim\\
Norway}
\email{aslakb@math.ntnu.no}

\author[Reiten]{Idun Reiten}
\address{Institutt for matematiske fag\\
Norges teknisk-naturvitenskapelige universitet\\
N-7491 Trondheim\\
Norway}
\email{idunr@math.ntnu.no}

\author[Thomas]{Hugh Thomas}
\address{Department of Mathematics and Statistics\\University of 
New Brunswick\\Fredericton NB\\E3B 1J4 Canada
}
\email{hthomas@unb.ca}
\thanks{All three authors were supported by STOR-FORSK grant 167130
  from NFR.  A.B.B. and I.R. were supported by grant 196600 from NFR.  H.T. was supported by an NSERC Discovery Grant.}


\begin{abstract} Let $W$ be a finite crystallographic reflection group.  The
generalized Catalan number of $W$ coincides both with the number of 
clusters in the cluster algebra associated to $W$, and with the number
of noncrossing partitions for $W$.  Natural bijections between these two
sets are known.  For any positive integer $m$, both $m$-clusters and 
$m$-noncrossing partitions have been defined, and the cardinality
of both these sets is the Fuss-Catalan number $C_m(W)$.  We give a natural
bijection between these two sets by first establishing a bijection
between two particular sets of exceptional sequences in the 
bounded derived category $D^b(H)$ for any finite-dimensional hereditary
algebra $H$.\end{abstract}

\keywords{exceptional objects, noncrossing partitions, clusters, derived
categories, generalized Catalan numbers, hereditary algebras}

\maketitle

\section*{Introduction}

This paper is motivated by the following problem in combinatorics.  
Let $W$ be a finite crystallographic reflection group.  Associated to $W$ is a positive integer called the generalized
Catalan number, which on the one hand equals the number of clusters in 
the associated cluster algebra \cite{fz}, and on the other hand equals the
number of noncrossing partitions for $W$, see \cite{bessis}.  Natural bijections
between the sets of clusters and noncrossing partitions associated with
$W$ have been found in \cite{read,abmw}.  More generally, for any integer
$m\geq 1$, there is associated with $W$ a set of $m$-clusters introduced
in \cite{fr} and a set of $m$-noncrossing partitions defined in \cite{arm}.  Each
of these sets has cardinality the Fuss-Catalan number $C_m(W)$, see
\cite{fr,arm}.  
The formula for $C_m(W)$ is as follows:

$$C_m(W)=\frac{\prod_{i=1}^n mh+e_i+1}{\prod_{i=1}^n e_i+1},$$
where $n$ is the rank of $W$, $h$ is its Coxeter number, and $e_1,\dots,e_n$ are its
exponents.  

One of our main results is to establish a natural bijection between the 
$m$-clusters and the $m$-noncrossing partitions for any $m\geq 1$.  We 
accomplish this by first solving a related, more general problem about
bijections between classes of exceptional sequences in bounded derived 
categories of finite dimensional hereditary algebras, which is also of
independent interest.  

Let $H$ be a connected hereditary artin algebra.
Then $H$ is a finite dimensional algebra over its centre, which is known
to be a field $k$.   
Examples of such algebras are path algebras over a field of finite quivers with no oriented cycles.
Let $\module H$ be the category of finite
dimensional left $H$-modules and let $\D = D^b(H)$ be the bounded derived category.
An $H$-module $M$ is called {\em rigid} if  $\Ext^1(M,M) = 0$, and 
an indecomposable rigid $H$-module is called {\em exceptional}. The set of
isomorphism classes of exceptional modules is countable, and it has interesting combinatorial structures,
which have been much studied
in the representation theory of algebras,
and in various combinatorial applications of this theory.

We study exceptional objects and sequences in the derived category $\D$.  
With a slight modification of the definition in \cite{kv}, we say that
an object $T$
in $\D$ is {\it silting} if $\Ext^i(T,T)=0$ for $i>0$ and $T$ is
maximal with respect to this property.  
We say that a basic object
$X=X_1\oplus \dots \oplus X_n$ in $\D$ is a $\Homneg$-configuration if all
$X_i$ are exceptional, $\Hom(X_i,X_j)=0$ for $i\ne j$, $\Ext^t(X,X)=0$ for
$t<0$, and there is no subset $\{Y_1,\dots,Y_r\}$ of the indecomposable
summands of $X$ such that $\Ext^1(Y_i,Y_{i+1})\ne 0$ for $ 1 \leq i
< r$ and $\Ext^1(Y_r,Y_1)
\ne 0$.  (Here $n$ denotes the number of isomorphism classes of simple 
$H$-modules.)
It follows from the definition of $\Homneg$-configuration that 
 $\{X_1,\dots,X_n\}$ can be ordered into a complete
exceptional sequence.  For any $m\geq 1$, we say that $X$ is an 
$m$-$\Homneg$-configuration if the $X_i$ lie in $\module H[t]$ for $0\leq t\leq m$.

Given the representation-theoretic interpretation of noncrossing
partitions
provided by \cite{it}, it was reasonable to expect a
representation-theoretic
manifestation of $m$-noncrossing partitions. One approach to developing
such a definition would have been to follow \cite{it} closely, and consider
sequences of finitely generated exact abelian, extension closed
subcategories
with suitable orthogonality conditions.  $\Homneg$-configurations
seemed to provide a more convenient viewpoint. When we have a Dynkin
quiver, the vanishing of $\Hom$ and of $\Ext$, which can be reduced to
$\Hom$, is easy to compute on the AR-quiver.
Hence it is not hard to compute $\Homneg$-configurations in this case.

Our main result is to obtain a natural bijection between silting objects and $\Homneg$-configurations via a certain sequence of mutations of exceptional sequences.  This induces
a bijection between $m$-cluster tilting objects and $m$-$\Homneg$-configurations,
for any $H$.  We also give a bijection between $m$-$\Homneg$-configurations
and $m$-noncrossing partitions for arbitrary $H$.  Specializing to $H$ being
of Dynkin type, we get as an application a bijection betwen $m$-clusters and 
$m$-noncrossing partitions.  

The paper is organized as follows. We first review preliminaries concerning exceptional sequences in module categories
as well as in derived categories. In Section 2, we recall 
the definition of silting objects and $m$-cluster tilting objects, 
and define $\Homneg$-configurations and $m$-$\Homneg$-configurations. We also state the precise version of our main result.
In the next section we give some basic results about mutations of exceptional sequences in the derived category. In
Section 4 we show how to construct $\Homneg$-configurations from silting objects. In the next two sections we finish
the proof of our main result. In Section 7 we give 
the combinatorial interpretation of our main result, 
including a version for the 
``positive'' Fuss-Catalan combinatorics.  
In Section 8, we discuss the relationship 
between our $\Homneg$-configurations and 
Riedtmann's combinatorial configurations from her work on 
selfinjective algebras \cite{Rie1,Rie2}. 
In Section 9 we show how the bijection we have constructed interacts with
torsion classes in $\D$.

We remark that the results in Section 8 have also been
obtained by Simoes \cite{sim}, in the Dynkin case, with an 
approach which is different than ours, and 
and independent from it.

\section{Preliminaries on exceptional sequences}

As before, let $H$ be a finite dimensional connected 
hereditary algebra over a field $k$ which is the centre of $H$, 
and let $\module H$ denote 
the category of finite dimensional left $H$-modules. We assume that $H$ has $n$ simple modules up to isomorphism.  In this section we recall some basic
results about exceptional sequences.  

\subsection{Exceptional sequences in the module category}

A sequence of exceptional objects 
$\E = (E_1,\dots,E_r)$ in $\module H$ is called an {\em exceptional sequence}
if $\Hom(E_j,E_i)=0=\Ext^1(E_j,E_i)$ for $j>i$.  

There are right and left mutation 
operations, denoted respectively $\mu_i$ and $\mu_i^{-1}$, which take exceptional sequences to exceptional 
sequences. 
Given an exceptional sequence $\E = (E_1,\dots,E_r)$, right mutation replaces the subsequence $(E_i,E_{i+1})$ by
$(E_{i+1},E_i^{\ast})$, while left mutation 
replaces the subsequence $(E_i,E_{i+1})$ by
$(E_{i+1}^{!}, E_i)$, for some exceptional
  objects $E_{i+1}^{!}$ and $E_i^{\ast}$.

We need the following facts about exceptional sequences in $\module H$. These are proved in 
\cite{c} (if the field $k$ is algebraically closed) and in \cite{r} in general.

\begin{proposition}\label{oldstuff}
Let $\E = (E_1,\dots,E_r)$ in $\module H$ be an exceptional sequence. Then the following hold:
\begin{itemize}
\item[(a)] $r \leq n$
\item[(b)] if $r <n$, then there is an exceptional sequence
$(E_1,\dots,E_r, E_{r+1}, \dots, E_n)$
\item[(c)] if $r= n-1$, then for a fixed index $j \in \{1, \dots n\}$, 
there is a unique indecomposable $M$, such that $$(E_1, \dots, E_{j-1},M, E_j, \dots E_{n-1})$$ is an  
exceptional sequence 
\item[(d)] for any $i \in \{1, \dots, r-1\}$, we have  $\mu_i^{-1} (\mu_i(\E)) = \E = \mu_i (\mu_i^{-1}(\E))$ 
\item[(e)] the set of $\mu_i$ satisfies the braid relations, i.e. $\mu_i \mu_{i+1} \mu_i =  \mu_{i+1} \mu_i \mu_{i+1}$ for
$i \in \{1, \dots, r-2 \}$, and $\mu_i \mu_j = \mu_j \mu_i$ for $|i-j| >1$
\item[(f)] the action of the set of $\mu_i$ on the set of complete exceptional sequences is transitive.
\end{itemize}
\end{proposition}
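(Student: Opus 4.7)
Throughout, the backbone of the argument is the \emph{perpendicular category} technique of Geigle--Lenzing and Schofield. For an exceptional module $E$, set
\[E^\perp = \{X \in \module H : \Hom(E,X) = 0 = \Ext^1(E,X)\},\]
and dually ${}^\perp E$. The key input is that for any exceptional sequence $(E_1, \ldots, E_r)$, the perpendicular subcategory $\{E_1, \ldots, E_r\}^\perp$ (and dually ${}^\perp\{E_1, \ldots, E_r\}$) is equivalent to $\module H'$ for a connected hereditary algebra $H'$ having exactly $n - r$ simple modules up to isomorphism, and the equivalence preserves the notions of exceptional object and exceptional sequence. Granted this, all six items follow by induction on $n$.

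For (a), the sequence $(E_1, \ldots, E_{r-1})$ lies in ${}^\perp E_r \simeq \module H''$ with $n - 1$ simples, so induction yields $r - 1 \leq n - 1$. For (b), if $r < n$ the category $\{E_1, \ldots, E_r\}^\perp$ has $n - r \geq 1$ simples, any of which provides a valid $E_{r+1}$; iterate. For (c), first use the mutation operations (whose existence is part of the setup preceding the proposition) to reduce to the case $j = n$, i.e.\ to extending an exceptional sequence of length $n - 1$ on the right; the candidates for $M$ are then the indecomposable exceptional objects of $\{E_1, \ldots, E_{n-1}\}^\perp$, which is a hereditary module category of rank one and hence has a unique such object, namely its unique simple.

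Items (d)--(f) concern mutation itself. One defines $\mu_i$ and $\mu_i^{-1}$ via universal triangles inside the two-term perpendicular subcategory generated by $(E_i, E_{i+1})$; (d) then amounts to a direct check that these triangles are mutually inverse, and the commutation for $|i - j| > 1$ in (e) is immediate from locality. The braid relation $\mu_i \mu_{i+1} \mu_i = \mu_{i+1} \mu_i \mu_{i+1}$ is verified inside the three-term perpendicular subcategory spanned by $(E_i, E_{i+1}, E_{i+2})$: both sides are shown to produce the unique complete reversal of the triple. Finally, (f) follows by induction on $n$: using (c) together with mutations, any complete exceptional sequence can be transformed so that its last entry equals a prescribed exceptional object $X$, after which the induction hypothesis applied inside ${}^\perp X$ completes the proof. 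The main obstacle is the perpendicular-category black box on which everything rests: establishing that these subcategories are module categories of connected hereditary algebras of the expected rank, compatibly with exceptionality, is the serious content extracted by Schofield and Geigle--Lenzing, and once it is granted the six assertions are straightforward inductive consequences.
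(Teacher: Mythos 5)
The paper does not prove this proposition: it is recalled as standard background, with the proofs attributed to \cite{c} and \cite{r}. Your strategy via perpendicular categories is indeed the one used in those sources, so the overall framing is appropriate; the comments below concern the execution.

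First, some small but systematic slips. Your perpendicular-category identifications are all stated on the wrong side relative to the paper's convention. Since the paper takes $\Hom(E_j,E_i)=0=\Ext^1(E_j,E_i)$ for $j>i$, the subsequence $(E_1,\dots,E_{r-1})$ lies in $E_r^\perp$ (not ${}^\perp E_r$), and to append an object on the right one should take a simple of ${}^\perp\{E_1,\dots,E_r\}$ (not $\{E_1,\dots,E_r\}^\perp$); likewise in (c). Your choices are internally consistent with the opposite ordering convention for exceptional sequences, so this is cosmetic, but it must be aligned with the paper. A more substantive inaccuracy is the claim that the hereditary algebra $H'$ realizing the perpendicular category is connected: this is false already for the linear $A_3$ quiver, where the perpendicular of the middle simple decomposes into two rank-one blocks. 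The induction therefore has to be carried out over not-necessarily-connected hereditary artin algebras, and the induction hypothesis must be stated accordingly.

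Second, and more seriously, the proof of (f) has a genuine gap. You assert that ``using (c) together with mutations, any complete exceptional sequence can be transformed so that its last entry equals a prescribed exceptional object $X$.'' That assertion is precisely the hard content of the transitivity theorem, and it does not follow from (c) plus the mere existence of mutations: (c) only says the missing term in a length-$(n-1)$ sequence is uniquely determined once a slot is fixed, and gives no mechanism for steering a sequence of mutations so that a given $X$ ever appears. In \cite{c} and \cite{r} this step is the core of the argument, carried out by a careful descent, with case analysis on the $\Hom$ and $\Ext^1$ spaces between $X$ and the terms of the current sequence, before the perpendicular-category induction you invoke can kick in. As written, the crux of (f) is missing; the rest of your sketch is correct in outline once the perpendicular sides and the connectedness hypothesis are fixed.
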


An exceptional sequence $\E= (E_1,\dots,E_r)$ is called {\em complete} if $r=n$.

\subsection{Exceptional sequences in derived categories}

Let $\D=D^b(H)$ denote the bounded derived category with translation functor $[1]$, the shift functor. 
This is a triangulated category, see \cite{hap} for general properties of such categories.
It is well known that since $H$ is hereditary,
the indecomposable objects of $\D$ are stalk complexes, i.e. they are up to isomorphism
of the form $M[i]$ for some indecomposable $H$-module $M$ and some
integer $i$. If $X =M[i]$ is an indecomposable object in $\D$, we will write $\overline{X} = M$ for the 
corresponding object in $\module H$.  

It is well-known that the derived category $\D$ has almost split triangles \cite{hap}, and hence an AR-translation $\tau$,
or equivalently a Serre-functor $\nu$, where we have $\nu = \tau [1]$.
We have the AR-formula $\Hom_{\D}(X,Y) \simeq D\Hom(Y, \tau X)$, for all objects $X,Y$ in $\D$.

It is convenient to consider also exceptional sequences in
the derived category $\D$. 
Let $\E = (E_1, E_2, \dots, E_r)$ be a sequence of indecomposable objects in $\D$. It is called an {\em exceptional
sequence} in $\D$ if 
$\overline{\E} = (\overline{E_1}, \overline{E_2}, \dots, \overline{E_r})$ 
is an exceptional sequence in $\module H$, and {\em complete} if $\overline{\E}$ is complete.

In Section \ref{S:Mut} we will describe a mutation operation on exceptional sequences in $\D$. For this we need the following
preliminary results.

\begin{lemma}\label{atmostone} Let $(E,F)$ be an exceptional sequence in $\D$. Then $\Ext^i(E,F)= \Hom_{\D}(E,F[i])$ is
nonzero for at most one integer $i$, and $\Ext^i(F,E) = \Hom_{\D}(F,E[i]) = 0$ for
all $i\in \mathbb Z$.  
\end{lemma}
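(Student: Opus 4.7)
My plan is to reduce both assertions to computations in $\module H$. By Happel's theorem every indecomposable object of $\D = D^b(H)$ is a stalk complex, so I can write $E = A[a]$ and $F = B[b]$ with $A, B$ indecomposable in $\module H$. The standard identification $\Hom_\D(X[i], Y[j]) \cong \Ext^{j-i}(X, Y)$ for modules $X, Y$ then gives
\[
\Hom_\D(E, F[i]) \cong \Ext^{b-a+i}(A, B), \qquad \Hom_\D(F, E[i]) \cong \Ext^{a-b+i}(B, A),
\]
and heredity of $H$ kills $\Ext^j$ for $j \notin \{0, 1\}$, so each of these is nonzero for at most two consecutive values of $i$. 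The exceptional pair condition on $(\overline{E}, \overline{F}) = (A, B)$ in $\module H$ is exactly $\Hom(B, A) = 0 = \Ext^1(B, A)$, which immediately gives the second assertion.

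For the first assertion I need to rule out $\Hom(A, B)$ and $\Ext^1(A, B)$ both being nonzero. (Note $A \not\cong B$, since otherwise $\Hom(B, A) = \End A \neq 0$.) Assuming $\Hom(A, B) \neq 0$, I pick a nonzero $f \colon A \to B$. Because $\Ext^1(B, A) = 0$ and $A, B$ are indecomposable, the Happel--Ringel lemma forces $f$ to be either a monomorphism or an epimorphism. In the epimorphism case I apply $\Hom(A, -)$ to $0 \to \ker f \to A \to B \to 0$; together with $\Ext^2 = 0$ this produces a surjection $\Ext^1(A, A) \twoheadrightarrow \Ext^1(A, B)$, and exceptionality of $A$ forces $\Ext^1(A, B) = 0$. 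In the monomorphism case, dually, applying $\Hom(-, B)$ to $0 \to A \to B \to \coker f \to 0$ produces a surjection $\Ext^1(B, B) \twoheadrightarrow \Ext^1(A, B)$, and exceptionality of $B$ again forces $\Ext^1(A, B) = 0$. Either outcome contradicts the hypothesis that both are nonzero.

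The only non-routine ingredient is the Happel--Ringel dichotomy, which is a standard fact for hereditary algebras; the rest is a direct application of the Ext long exact sequences. The subtlety worth flagging is that the epi and mono cases use $\Ext^1(A, A) = 0$ and $\Ext^1(B, B) = 0$ respectively, so both halves of the exceptional pair $(A, B)$ are genuinely needed, along with $\Ext^2 = 0$ from heredity.
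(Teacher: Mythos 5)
Your proof is correct, and it proceeds by a genuinely different route from the paper's. The paper reduces the pair $(\overline{E},\overline{F})$ to an exceptional sequence in a rank-two hereditary module category $\module H'$ (using Crawley-Boevey and Ringel), then observes that every exceptional indecomposable over a rank-two hereditary algebra is preprojective or preinjective, and finishes by a case analysis of the possible exceptional sequences over such algebras. You instead argue directly in $\module H$: after the routine reduction to stalk complexes and the observation that heredity kills $\Ext^j$ for $j\notin\{0,1\}$, you invoke the Happel--Ringel dichotomy (a nonzero $f\colon A\to B$ between indecomposables with $\Ext^1(B,A)=0$ is a mono or an epi) and then use the $\Ext$ long exact sequence on $0\to\ker f\to A\to B\to 0$ (resp.\ $0\to A\to B\to\coker f\to 0$), together with $\Ext^2=0$ and the rigidity of $A$ (resp.\ $B$), to kill $\Ext^1(A,B)$. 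Both arguments are clean; yours is more self-contained and elementary in that it does not route through the rank-two reduction and the structure theory of rank-two hereditary algebras, while the paper's approach illustrates a more broadly useful technique (perpendicular-category reduction to low rank) that recurs elsewhere in the theory of exceptional sequences. Your careful flagging of which half of exceptionality is used in which branch, and of the role of $\Ext^2=0$, is exactly right.
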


\begin{proof} We provide the short proof from \cite{brt} for the convenience
of the reader.  

It suffices to check the statements for $(\overline{E}, \overline{F})$.
By results from \cite{c,r}, we can consider $(\overline{E}, \overline{F})$ as an exceptional sequence
in a hereditary module category, say $\module H'$, with $H'$ of rank 2, and such that $\module H'$ has a full 
and exact embedding into $\module H$.
For a hereditary algebra $H'$ of rank 2, the only exceptional indecomposable modules are
preprojective or preinjective. Hence, a case analysis of the possible exceptional sequences in
$\module H'$ for such algebras, gives the first statement.
The second statement is immediate from the definition of exceptional sequence.
\end{proof}

There is a general notion of exceptional sequences in triangulated categories, see \cite{bond,gk}.
Note that in our setting, this definition is equivalent to our definition. 
This follows from combining the fact that indecomposables in $\D$ are stalk complexes
with the second part of Lemma \ref{atmostone}.

\section{Silting objects and $\Homneg$-configurations}

In this section
we recall some basic properties of silting objects, and introduce the
notion of $\Homneg$-configurations.

\subsection{Silting objects}

A basic object $Y$ in $\D$ is called a {\em partial silting object} if 
$\Ext^i_{\D}(Y,Y)=0$ for $i\geq 1$, and {\em silting} if it is maximal with respect to this property.
Note that this differs slightly from the original definition in \cite{kv}.
It is known (see \cite{brt}) that a partial silting object $Y$ is silting if and only if it
has $n$ indecomposable direct summands. If a silting object $Y$
  is in $\mod H$, it is called a {\em tilting module}. 

The following connection with exceptional sequences is a special case of \cite[Theorem 2.3]{ast}. 
We include the sketch of a proof for convenience.

\begin{lemma}\label{l:silt_exc}
If $Y$ is partial silting in $\D$, there is a way to order its indecomposable direct summands
to obtain an exceptional sequence $(Y_1,\dots,Y_r)$ in $\D$.  
\end{lemma}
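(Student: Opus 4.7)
The plan is to reduce the claim to the classical fact that the indecomposable summands of a rigid $H$-module can be ordered into an exceptional sequence in $\module H$, using the shift grading of $\D$ to handle summands at different shifts automatically.

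First I would extract what partial silting forces on the underlying modules. Writing $Y_i = M_i[n_i]$ with $M_i$ indecomposable, the identity
$$\Ext^s_{\D}(Y_j,Y_i) = \Ext^{s+n_i-n_j}(M_j, M_i),$$
together with the hereditariness of $H$, gives two things. Setting $i=j$ and $s=1$ shows $\Ext^1(M_i,M_i)=0$, so each $M_i$ is exceptional. If $M_i \cong M_j$ as $H$-modules with $n_i < n_j$ for $i \neq j$, then $s = n_j - n_i \geq 1$ would produce $\End(M_i)=0$, a contradiction; hence the $M_i$ are pairwise non-isomorphic exceptional modules.

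Next I would reorder the summands so that $n_1 \leq n_2 \leq \cdots \leq n_r$. For $j > i$ with $n_j > n_i$, taking $s = n_j - n_i$ and $s = 1 + n_j - n_i$ in the displayed identity yields $\Hom(M_j, M_i) = 0$ and $\Ext^1(M_j, M_i) = 0$ respectively, which are precisely the vanishings required across such a pair to form an exceptional sequence. So the only remaining task is to order the summands within each constant-shift block.

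At a fixed shift, the underlying modules $M_{i_1}, \ldots, M_{i_t}$ are pairwise non-isomorphic exceptional modules with pairwise vanishing $\Ext^1$ (from the silting condition with $s=1$), so their direct sum is a rigid $H$-module. I would then invoke the classical fact that the indecomposable summands of a rigid module in $\module H$ admit an ordering into an exceptional sequence in $\module H$ (as in \cite{c,r}), and concatenate the resulting block orderings to obtain the global exceptional sequence in $\D$. The only non-formal ingredient is this last classical statement about rigid modules; everything else is routine bookkeeping with the shift grading, which is why the obstacle lies purely in the within-a-block ordering step.
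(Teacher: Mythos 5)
Your proof is correct and follows the same strategy as the paper's: order the summands by shift degree (lower shift first), observe that the partial-silting condition supplies the required vanishings between summands of distinct degrees, and then order within each fixed degree using the fact that the indecomposable summands of a rigid $H$-module can be arranged into an exceptional sequence. The paper obtains this last ingredient from the Happel--Ringel result that the quiver of the endomorphism algebra of a rigid module has no oriented cycles, so the precise reference for the within-block step is \cite{hr} rather than \cite{c,r}.
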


\begin{proof} 
Assume that $A[u]$ and $B[v]$ are indecomposable direct summands of $Y$ where
$A,B$ are $H$-modules. If $v>u$, then $\Ext^i_{\D}(B, A) = 0$ for all $i$, so we 
put $A[u]$ before $B[v]$ in the exceptional sequence.
For a fixed degree $d$, assume there are $t \geq 1$ direct summands of $Y$ of degree $d$. The direct sum of these $t$ summands 
is the shift of a rigid module in $\module H$. By \cite{hr}, there
are no oriented cycles in the quiver of the endomorphism ring of a
rigid module. Hence, 
there is an ordering on these $t$ summands, say $A_1, \dots, A_t$, such that $\Hom(A_j,A_k) = 0$ for $j>k$.  
\end{proof}

An exceptional sequence is called {\em silting} if it is induced by a silting object as in
Lemma \ref{l:silt_exc}.

An object $M$ in $\D$ is called a {\em generator} if $\Hom_{\D}(M,X[i]) = 0$ for all $i$ only if $X=0$. 
For a hereditary algebra, the indecomposable projectives can be ordered to form an exceptional sequence.
Hence, by transitivity of the action of mutation on exceptional sequences (Proposition \ref{oldstuff}) (e)), the direct sum of 
the objects in an exceptional sequence is a 
generator. Thus we obtain the following consequence of Lemma \ref{l:silt_exc}.

\begin{lemma}\label{silting_generates}
Any silting object in $\D$ is a generator.
\end{lemma}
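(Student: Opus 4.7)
The plan is to reduce to the stronger claim that the direct sum of the terms of any complete exceptional sequence in $\D$ generates $\D$ as a triangulated category. Being a generator in the sense of the lemma then follows formally: the full subcategory $\mathcal{S} := \{Y \in \D \mid \Hom_{\D}(Y,X[i]) = 0 \text{ for all } i\}$ is closed under shifts, triangles, and direct summands, so if it contains a triangulated generator it equals $\D$, forcing $\operatorname{id}_X = 0$ and hence $X=0$. Granting this reduction, I would apply Lemma \ref{l:silt_exc} to a silting object $T$ to obtain a complete exceptional sequence $(Y_1,\dots,Y_n)$ in $\D$ whose direct sum is $T$, and observe that, since triangulated subcategories are stable under shifts, $T$ generates $\D$ if and only if the underlying modules $\overline{Y_1} \oplus \dots \oplus \overline{Y_n}$ do.

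For the base case, I take the exceptional sequence $(P_1,\dots,P_n)$ of indecomposable projectives of $H$, suitably ordered. Since $H$ is hereditary, every object of $\module H$ fits into a two-term projective resolution, so every stalk complex lies in the triangulated subcategory generated by the $P_i$; iterating cones extends this to all of $\D$, and thus $P_1 \oplus \dots \oplus P_n$ is a triangulated generator. By transitivity of the braid group action on complete exceptional sequences in $\module H$ (Proposition \ref{oldstuff}(f)), any complete exceptional sequence $(\overline{Y_1},\dots,\overline{Y_n})$ is obtained from $(P_1,\dots,P_n)$ by a finite sequence of mutations $\mu_i^{\pm 1}$.

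The remaining step is to verify that a single mutation preserves the triangulated subcategory of $\D$ generated by the terms of the sequence: this is because the defining triangle of $E_i^{\ast}$ (respectively $E_{i+1}^!$) exhibits it as built from $E_i$ and $E_{i+1}$ by cones and shifts, and part (d) of Proposition \ref{oldstuff} supplies the symmetric inclusion. Propagating this invariance along the mutation path from $(P_1,\dots,P_n)$ to $(\overline{Y_1},\dots,\overline{Y_n})$ then forces $\bigoplus \overline{Y_i}$ to generate $\D$ triangulatedly, completing the argument. The main technical point to watch is that Lemma \ref{l:silt_exc} produces exceptional sequences in $\D$ rather than in $\module H$, so one must be careful that feeding the underlying modules into Proposition \ref{oldstuff} and then invoking shift-invariance really yields a triangulated generator upstairs in $\D$; once this bookkeeping is settled, the mutation invariance itself is a routine consequence of the mutation triangles.
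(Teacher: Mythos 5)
Your proposal is correct and follows exactly the paper's route: order the silting object into a complete exceptional sequence via Lemma \ref{l:silt_exc}, observe that the projectives form a generating complete exceptional sequence, and transport generation across the braid-group orbit using transitivity (Proposition \ref{oldstuff}(f)) together with the fact that each mutation triangle keeps the generated triangulated subcategory unchanged. You fill in details (the formal closure properties of the orthogonal subcategory, the two-term resolution argument for the base case, the shift-invariance bookkeeping) that the paper leaves implicit in its one-line derivation, but the underlying argument is the same.
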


For a positive integer $m$, the $m$-cluster category is the orbit category
$\C_m = \D / \tau^{-1}[m]$, see \cite{bmrrt,keller,t,z,w}. It is canonically triangulated by \cite{keller}.
An object $T$ in $\C_m$ is called {\em maximal rigid} if 
$\Ext_{\C_m}^i(T,T)= 0$ for $i= 1, \dots , m$, and $T$ is maximal with respect to this property.
An object $T$ in $\C_m$ is called {\em $m$-cluster tilting} if for any object $X$, we have that
$X$ is in $\add T$ if and only if $\Ext_{\C_m}^i(T,X)= 0$ for $i= 1, \dots , m$.
It is known by \cite{w,zz} that $T$ is maximal rigid if and only if it is an $m$-cluster tilting object.
Let $\dmoneplus$ denote the full subcategory of $\D$ additively generated by: 
the injectives in $\module H$,
together with $\module H[i]$ for $1\leq i \leq m$. 
Every object in $\C_m$ is induced by an object contained in $\dmoneplus$.

In \cite{brt} it is shown that silting objects contained in $\dmoneplus$ are in 1-1 
correspondence with $m$-cluster tilting objects. 
We consider this an identification, and from now on we will refer to silting objects contained in $\dmoneplus$
as $m$-cluster tilting objects.

\subsection{$\Hom_{\leq 0}$-configurations and $m$-$\Hom_{\leq 0}$-configurations}
In this subsection we introduce new types of objects in $\D$.  They
are related to the combinatorial configurations investigated in
\cite{Rie1,Rie2}, and will turn out to be closely related to 
noncrossing partitions.   

A basic object $X$ in $\D$ is a {\em $\Hom_{\leq 0}$-configuration} if
\begin{enumerate}
\item [(H1)] $X$ is the direct sum of $n$ exceptional indecomposable
summands $X_1,\dots,X_n$, where $n$ is the number of simple modules of $H$.
\item [(H2)] $\Hom(X_i,X_j)=0$ for $i \neq j$.
\item [(H3)] $\Ext^t(X,X)=0$ for $t<0$.  
\item [(H4)] there is no subset $\{Y_1,\dots,Y_r\}$ of the indecomposable 
summands
of $X$ such that $\Ext^1(Y_i,Y_{i+1})\ne 0$ and $\Ext^1(Y_r,Y_1)\ne 0$.  
\end{enumerate}

\begin{lemma} \label{hfour}
The indecomposable summands of a $\Homneg$-configuration can
be ordered into a complete exceptional sequence. 
\end{lemma}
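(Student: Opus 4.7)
The plan is to write each indecomposable summand as $X_i = M_i[d_i]$ with $M_i$ an exceptional object of $\module H$ and $d_i \in \mathbb Z$, and to produce a permutation $\sigma$ such that $(M_{\sigma(1)},\dots,M_{\sigma(n)})$ is an exceptional sequence in $\module H$. By Lemma~\ref{atmostone} this automatically promotes $(X_{\sigma(1)},\dots,X_{\sigma(n)})$ to an exceptional sequence in $\D$, and by (H1) it will be complete.

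First I would translate (H2)--(H4) into module-level information using the identity $\Hom_\D(X_i,X_j[t]) = \Ext^{d_j-d_i+t}(M_i,M_j)$, together with the fact that $H$ is hereditary, so $\Ext^k(-,-)$ vanishes outside $k \in \{0,1\}$. Two consequences are crucial. Whenever $d_j > d_i$, both $\Hom(M_i,M_j)$ and $\Ext^1(M_i,M_j)$ vanish: this follows from (H3) if $d_j-d_i \geq 2$, and from (H3) together with (H2) if $d_j-d_i = 1$ (since then $\Hom_\D(X_i,X_j)=\Ext^1(M_i,M_j)$). When $d_i = d_j$ with $i \ne j$, (H2) alone gives $\Hom(M_i,M_j) = 0$. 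Separately, (H4) says exactly that the digraph $\Gamma$ on $\{X_1,\dots,X_n\}$ with an arrow $X_i \to X_j$ whenever $\Ext^1_\D(X_i,X_j) \ne 0$ is acyclic.

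The ordering I would propose is: first by strictly decreasing $d_i$, breaking ties within each fixed degree by a topological sort of $\Gamma$ restricted to that degree, which exists because $\Gamma$ is acyclic by (H4). To verify the resulting sequence is exceptional, suppose $X_a$ precedes $X_b$. If $d_a > d_b$, the first consequence above, applied with the roles $i = b$, $j = a$, yields $\Hom(M_b,M_a) = 0 = \Ext^1(M_b,M_a)$. If $d_a = d_b$, then $\Hom(M_b,M_a) = 0$ by (H2), and $\Ext^1(M_b,M_a)$ coincides with $\Ext^1_\D(X_b,X_a)$; a nonzero value would furnish an edge $X_b \to X_a$ within this single degree, contradicting the chosen topological sort.

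The main obstacle is just the shift bookkeeping: namely, verifying that (H2) and (H3), combined with hereditarity, cover all cross-degree vanishings (the boundary case $d_j - d_i = 1$ being the only delicate one), while (H4) alone handles the same-degree case through acyclicity of $\Gamma$. No input beyond Lemma~\ref{atmostone} and elementary topological-sort arguments should be required.
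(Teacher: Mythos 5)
Your proposal is correct and follows essentially the same route as the paper's proof: order the summands by strictly decreasing shift degree, use the acyclicity guaranteed by (H4) to order within each degree, and check the cross-degree vanishing using (H2), (H3), and hereditarity. (One cosmetic remark: the appeal to Lemma~\ref{atmostone} at the outset is unnecessary, since the paper's definition of an exceptional sequence in $\D$ already reduces directly to the module-level statement.)
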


\begin{proof}  Let $X$ be a $\Homneg$-configuration, and let
$X=\bigoplus_i A_i[i]$, with $A_i\in \mod H$, and 
where all but finitely many of the $A_i$ are
zero.   Each $A_i$ is the direct sum of finitely many indecomposables in
$\mod H$ with no morphisms between them, so (H4) suffices to conclude
that they can be ordered into an exceptional sequence.  Now concatenate
the sequences in {\it decreasing} order with respect to $i$.  This 
implies that if $E,F$ are indecomposable summands of $X$ lying in
$\mod H[e]$ and $\mod H[f]$ respectively with $e<f$, 
then $F$ will precede $E$ in
the sequence.  We must therefore show that $\Ext^j(E,F)=0$ for all $j$.  
This is 
true for $j\leq 0$ by (H2) and (H3), and for $j>0$ because $e<f$. 
\end{proof}

Note that it is also the case that, for $X$ in $\D$,  
if the summands of $X$ can be ordered
into a complete exceptional sequence, then (H4) is necessarily satisfied.

If $X$ is a $\Homneg$-configuration, we refer to an exceptional  
sequence on the indecomposable summands of $X$ as a $\Hom_{\leq 0}$-configuration exceptional sequence. 
A $\Hom_{\leq 0}$-configuration will be called an 
{\em $m$-$\Hom_{\leq 0}$-configuration} if it is contained in the full subcategory 
$\dmzero$, whose indecomposables are in $\module H[i]$ for $0\leq i \leq m$.


\subsection{Main results}

One aim of this paper is to use mutation of exceptional sequences to 
establish the following result. 

\begin{theorem} \label{maintheorem}
There are bijections between
\begin{itemize}
\item[(a)] exceptional sequences which are silting and  
exceptional sequences which are $\Hom_{\leq 0}$-configurations.
\item[(b)] silting objects and $\Hom_{\leq 0}$-configurations.
\item[(c)] $m$-cluster tilting objects and $m$-$\Hom_{\leq 0}$-configurations (for any $m \geq 1)$. 

\end{itemize}
\end{theorem}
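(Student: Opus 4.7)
The plan is to produce the bijection at the level of exceptional sequences first, then descend to objects, and finally restrict to the $m$-case by tracking shift degrees.

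First, I would define the correspondence on exceptional sequences. A silting exceptional sequence (from Lemma~\ref{l:silt_exc}) has shifts arranged in a non-decreasing order, and for summands in the same shift, Hom can only go ``forward'' in the sequence. A $\Homneg$-configuration exceptional sequence (from the proof of Lemma~\ref{hfour}) has shifts arranged in a non-increasing order, with no Hom between distinct summands and no cyclic chain of Ext$^1$'s. Comparing the two descriptions, the natural attempt is to relate them by a specific composition of mutations whose combinatorial shape mimics the longest element of $S_n$, i.e., a ``half-twist'' in the braid group generated by the $\mu_i$'s. Concretely, for a silting sequence $\E=(Y_1,\dots,Y_n)$, I would process pairs $(Y_i,Y_{i+1})$ in a fixed order dictated by a reduced expression of $\w$, applying right mutations whenever the pair fails to meet the $\Homneg$-orthogonality condition. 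The mutation outputs of Section~\ref{S:Mut} control exactly what happens to shifts and Hom-/Ext-groups, so I can check inductively that after the full half-twist the resulting sequence has vanishing Hom between all pairs and no negative Ext: this is Section~4 of the paper.

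Next I would construct the inverse. Starting from a $\Homneg$-configuration exceptional sequence, apply the opposite sequence of left mutations (a ``half-twist in reverse''). Using the braid relations (Proposition~\ref{oldstuff}(e)) and the inverse-of-mutation identity (Proposition~\ref{oldstuff}(d)), show that this composite is inverse to the one built in the previous paragraph. The upshot is that each individual mutation step is invertible, and the specific reduced-word pattern built from $\w$ has the feature that the forward pass turns ``forward Homs'' into ``Ext$^1$'s between summands of the appropriate shifts'' while the backward pass does the reverse; verifying this is the main technical content of Sections~5--6.

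For part (b), I would show the bijection on exceptional sequences descends to a bijection on isomorphism classes of objects. Two exceptional sequences induced by the same silting object $Y$ differ only in the internal ordering of summands in each shift, and these orderings are linked by mutations $\mu_i$ which, in the silting case, do not change the underlying object (they permute summands using the fact that $\Hom(Y_{i+1},Y_i)=0$ on that level forces the mutation to be a swap). A parallel statement holds on the $\Homneg$-configuration side: orderings of the same configuration differ by swaps that don't change the object. One then checks the bijection from part (a) intertwines these two equivalence relations, giving (b).

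For part (c), the key is to track shifts through the mutations. Using that each mutation of an exceptional pair $(E,F)$ in $\D$ can only produce an object whose shift is one of the shifts of $E,F$ (or adjacent to them), and being careful about the boundary cases where injectives in degree $0$ are involved, I would show that a silting object in $\dmoneplus$ is sent to a $\Homneg$-configuration in $\dmzero$ and conversely. Combined with the identification from \cite{brt} of silting objects in $\dmoneplus$ with $m$-cluster tilting objects, this yields (c). The main obstacle I expect is in part (a): proving that the specific half-twist mutation procedure produces a sequence satisfying all four axioms (H1)--(H4) simultaneously, in particular that (H4) (no Ext$^1$-cycle among summands) is automatic once Hom vanishes and the sequence remains a complete exceptional sequence. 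The verification that the inverse procedure really recovers the original silting sequence will also require careful bookkeeping via the braid relations.
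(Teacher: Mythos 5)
Your approach to part (a) is essentially the paper's: apply the half-twist $\mu_\rev$ in the order given by a reduced word for $\w$, argue inductively starting from the rank-$2$ case, and use the fact (via Lemma~\ref{mutationlemma}(e)) that the truncated sequence stays silting (resp.\ stays a $\Homneg$-configuration). Your observation that (H4) is automatic once the output is a complete exceptional sequence is correct. One minor caveat: you also need the sign-control statements (Lemmas~\ref{goodmut} and~\ref{othermut}, that the mutations occurring are all negative when you start from a silting sequence, all non-negative when you start from a $\Homneg$-configuration); this is what makes the inductive Hom/Ext bookkeeping go through.

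Parts (b) and (c) have genuine gaps. For (b), the reason you give for why reorderings of a silting exceptional sequence are linked by \emph{swap} mutations is wrong: $\Hom(Y_{i+1},Y_i)=0$ is just exceptionality and does not make $\mu_i$ a swap. What you need is $\Ext^\bullet(Y_i,Y_{i+1})=0$, which does hold exactly when both orderings are exceptional. More seriously, the statement ``one then checks the bijection intertwines the two equivalence relations'' is precisely the nontrivial content, and you offer no mechanism for it. The paper does this by passing to the braid group $B_n$: it encodes the swap relation via the set $R_\E=\{(i,j)\mid\Ext^\bullet(E_i,E_j)=0\}$ and the stabilizer $\stab_\E$, proves $\sigma_\rev\sigma_i\sigma_\rev^{-1}=\sigma_{n-i}$, and deduces $(n-j,n-i)\in R_{\mu_\rev(\E)}\iff(i,j)\in R_\E$. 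Without this identity (or an equivalent argument), your descent from exceptional sequences to objects is unproven.

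For (c), tracking shift degrees mutation-by-mutation is not a workable plan as stated: $\mu_\rev$ is a long composite, and your ``shift is one of those of $E,F$, or adjacent'' bound would allow unbounded drift of degrees in the intermediate sequences. The paper sidesteps this by using torsion classes in $\D$. The key mechanism is that a \emph{non-negative} mutation of an exceptional sequence lying in a torsion class stays in that torsion class (Lemma~\ref{splittor}), while a \emph{negative} inverse mutation preserves a torsion-free class (Lemma~\ref{splittf}); combining these with the sign statements from (a) and with $\mu_\rev^2=\nu^{-1}$ (Proposition~\ref{p:double_mu}) controls exactly which subcategory $\dmzero$ or $\dmoneplus$ the output lands in. You should replace the ad hoc degree bookkeeping with this argument.
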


We prove (a) in Section \ref{s:silt_hom} and (b) in Section \ref{s:bijection}, while (c) is proved in Section 
\ref{s:cluster_config}.  In Section 7 we apply (c) in finite type to 
obtain a bijection between $m$-noncrossing partitions in the sense of
\cite{arm} and $m$-clusters in the sense of \cite{fr}.

\section{Mutations in the derived category}\label{S:Mut}

In this section we give some basic results on mutations of exceptional sequences in the bounded
derived category. This is the main tool used in the proof of 
Theorem \ref{maintheorem}.
We also compare mutations in $\D$ with mutations in $\module H$.
The results in this section can also be found in e.g. \cite{bond, gk}.
We include proofs, for completeness and for the convenience of the reader. 

We start with the following observation.
\begin{lemma}\label{movetoend} Let $(E_1,\dots,E_n)$ be a complete exceptional
sequence in $\D$.  
For any complete exceptional sequence $(E_2,\dots,E_n,X)$, we have
$X=\nu^{-1} E_1[k]$ for some $k$.  
\end{lemma}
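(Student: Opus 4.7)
\medskip\noindent
\textbf{Proof plan.} The plan is to identify $\nu^{-1} E_1$ as a canonical completion of $(E_2,\dots,E_n)$ to a complete exceptional sequence in $\D$, and then to invoke the uniqueness statement of Proposition \ref{oldstuff}(c) applied in $\module H$ to force $\overline{X}\simeq\overline{\nu^{-1}E_1}$. Since indecomposables in $\D$ are stalk complexes, this forces $X$ and $\nu^{-1}E_1$ to differ only by a shift, as desired.

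First I would verify that $(E_2,\dots,E_n,\nu^{-1}E_1)$ is itself a complete exceptional sequence in $\D$. The object $\nu^{-1}E_1$ is indecomposable and exceptional since $\nu$ is an autoequivalence, and $(E_2,\dots,E_n)$ is already exceptional by hypothesis, so the only conditions left to check are the vanishings $\Hom(\overline{\nu^{-1}E_1},\overline{E_i})=0$ and $\Ext^1(\overline{\nu^{-1}E_1},\overline{E_i})=0$ in $\module H$ for each $i=2,\dots,n$. Writing each of these Ext groups in $\module H$ as a $\Hom_\D$ between the appropriate shifts of $\nu^{-1}E_1$ and $E_i$, and then applying the Serre duality isomorphism $\Hom_\D(\nu^{-1}Y,Z)\simeq D\Hom_\D(Z,Y)$, I reduce every such vanishing to a statement of the form $\Hom_\D(E_i,E_1[s])=0$ for some $s\in\mathbb Z$. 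All such vanishings hold by the second part of Lemma \ref{atmostone} applied to the exceptional pair $(E_1,E_i)$ in $\D$.

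Having established that, I would then apply Proposition \ref{oldstuff}(c) in $\module H$ with $r=n-1$ and distinguished index equal to $n$: there is a unique indecomposable $H$-module completing $(\overline{E_2},\dots,\overline{E_n})$ to a complete exceptional sequence in $\module H$. Both $\overline{X}$ (from the hypothesis on $X$) and $\overline{\nu^{-1}E_1}$ (from the previous paragraph) are such completions, so they must coincide, whence $X\simeq\nu^{-1}E_1[k]$ for some integer $k$. The main subtlety in this argument is the shift bookkeeping in the Serre duality step, since $\nu^{-1}E_1$ and $E_i$ may live in different stalk degrees and one has to track this difference when converting $\Ext^{\ell}_{\module H}(\overline{\nu^{-1}E_1},\overline{E_i})$ into a $\Hom_\D$; this is, however, routine.
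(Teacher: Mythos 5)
Your proof is correct and follows essentially the same route as the paper's: show that $(E_2,\dots,E_n,\nu^{-1}E_1)$ is a complete exceptional sequence via Serre duality combined with the vanishing $\Hom_{\D}(E_i,E_1[j])=0$ (the second part of Lemma~\ref{atmostone}), then invoke the uniqueness statement in Proposition~\ref{oldstuff}(c). The only difference is that you spell out the shift bookkeeping and cite Lemma~\ref{atmostone} explicitly, whereas the paper states the vanishing $\Ext^i_{\D}(E_j,E_1)=0$ directly; the underlying argument is the same.
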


\begin{proof}
Since the sequence $(E_1,\dots,E_n)$ is exceptional, we know that 
$\Ext^i_{\D}(E_j,E_1)=0$ for $j>1$ and all $i$.  By Serre duality for $\D$, this
implies that $\Ext^i_{\D}(\nu^{-1} E_1, E_j)$=0 for all $i$ and all $j>1$.  
From this it follows that $(\overline{E_2},\dots,\overline{E_n}, \nu^{-1} \overline{E_1})$
is exceptional in $\module H$. The claim now follows from
Proposition \ref{oldstuff} (c).
\end{proof}

We now describe mutation of exceptional sequences in $\D$, and show that 
it is compatible with mutation in the module category.

For an object $Y$ in $\D$, we
write $\th(Y)$ for the thick additive full subcategory of $\D$ generated by $Y$.  
Note that if $Y$ is exceptional, the objects of $\th(Y)$ are direct sums
of objects of the form $Y[i]$.

Define an operation $\hat \mu_i$ on exceptional sequences in $\D$
by replacing the  pair $(E_i,E_{i+1})$ by the pair $(E_{i+1},E_i^{\ast})$, where $E_i^{\ast}$ is defined by taking 
$E_i \rightarrow Z$ to be the minimal 
left $\th(E_{i+1})$-approximation of $E_i$, and completing to a triangle:

$$E_i^{\ast} \to E_i \to Z  \to E_i^{\ast} [1]$$

\noindent Note that $Z$ is of the form $E_{i+1}^r[p]$, by Lemma \ref{atmostone}
(in other words, $Z$ is concentrated in one degree).

Similarly, we define $\hat \mu_i^{-1}$ of $(E_1,\dots,E_r)$ by taking 
$Z\rightarrow E_{i+1}$ to be the minimal right 
$\th(E_i)$ approximation of $E_{i+1}$,
completing to a triangle 
$$ Z \rightarrow E_{i+1}\rightarrow  E_{i+1}^{!} \rightarrow Z[1] $$  
and replacing the pair $(E_i,E_{i+1})$ with $(E_{i+1}^{!},E_i)$.

We recall the following well-known properties of exceptional objects
in $\module H$. The proofs of these
are contained in \cite{bong}, \cite{hr} and \cite{rs}, see also \cite{hub}.

\begin{lemma}\label{modulestuff}
Let $E,F$ be exceptional modules, and assume $\Hom(F,E) = 0 = \Ext^1(F,E)$.
\begin{itemize} 
\item[(a)] Let $f \colon E \to F^r$ be a minimal left $\add F$-approximation.
If $\Hom(E,F) \neq 0$, then $f$ is either an epimorphism or a
monomorphism.  
\item[(b)] If $\Ext^1(E,F) \neq 0$, there is an 
extension 
\[
0 \to F^r \to U \to E \to 0 \]
with the property that $\Hom(F^r,F') \to \Ext^1(E,F')$ is a surjection for any $F' \in \add F$.
\end{itemize}
\end{lemma}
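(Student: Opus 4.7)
The plan for part (a) is to factor the minimal approximation $f\colon E\to F^{r}$ through its image, writing it as $E\twoheadrightarrow I\hookrightarrow F^{r}$, and setting $K=\ker f$ and $C=\coker f$. The goal is to show that either $K=0$ (so $f$ is mono) or $C=0$ (so $f$ is epi). I would first apply $\Hom(-,F)$ to the two short exact sequences
\[
0\to K\to E\to I\to 0 \quad\text{and}\quad 0\to I\to F^{r}\to C\to 0.
\]
Using that $F$ is exceptional, so $\Ext^{1}(F^{r},F)=0$, and that $f$ is an $\add F$-approximation, so $\Hom(F^{r},F)\to\Hom(E,F)$ is surjective, one derives $\Ext^{1}(I,F)=0$, $\Hom(I,F)\isom\Hom(E,F)$, $\Hom(K,F)=0$, and $\Ext^{1}(C,F)=0$. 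Dually, applying $\Hom(F,-)$ and using the hypothesis $\Hom(F,E)=0=\Ext^{1}(F,E)$ yields $\Hom(F,K)=0$, $\Ext^{1}(F,I)=0$, $\Hom(F,I)\isom\Ext^{1}(F,K)$, and $\Ext^{1}(F,C)=0$.

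To close part (a), I would use that $F$ is indecomposable with $\End F=k$ (as $F$ is exceptional), so minimality of $f$ forces $\Hom(F^{r},F)\to\Hom(E,F)$ to be a bijection, with both sides of $k$-dimension $r$. Combined with the vanishings above, this upgrades $\Ext^{1}(C,F)=0$ to $\Hom(C,F)=0$ as well, so $C\in {}^{\perp}F$, while $K$ satisfies $\Hom(F,K)=0=\Hom(K,F)$. The remaining task is to rule out the simultaneous nonvanishing of $K$ and $C$: the idea is that such a configuration would contradict either the minimality of $r$ or the indecomposability of $F$, by producing a direct summand of $f$ that factors nontrivially through a proper subobject of $F^{r}$.

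For part (b), the plan is a direct universal (Bongartz-type) construction: let $r=\dim_{k}\Ext^{1}(E,F)$, choose a $k$-basis $\eta_{1},\dots,\eta_{r}$, and assemble them into a class in $\Ext^{1}(E,F^{r})$ represented by a short exact sequence $0\to F^{r}\to U\to E\to 0$. For $F'\in\add F$, surjectivity of the connecting map $\Hom(F^{r},F')\to\Ext^{1}(E,F')$ then reduces by additivity in the second argument to the defining case $F'=F$, where it holds tautologically because the $\eta_{i}$ span $\Ext^{1}(E,F)$.

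The main obstacle is the closing step in part (a): the vanishings derived from the long exact sequences are symmetric between $K$ and $C$ and do not, on their own, single out which of the two must vanish. The actual argument requires using the minimality of $f$ and the indecomposability of $F$ in a subtler way, typically by showing that a ``mixed'' factorization (with both $K\ne 0$ and $C\ne 0$) can be refined so as to contradict the minimality of $r$. Part (b), by contrast, is a standard universal construction, and the only mild care needed is to track additivity when passing from $F$ to general $F'\in\add F$.
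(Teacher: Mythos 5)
The paper itself does not prove this lemma: it is stated with the remark that the proofs are contained in \cite{bong}, \cite{hr} and \cite{rs}, see also \cite{hub}. So there is no ``paper's approach'' to compare against; I can only evaluate your proposal on its own merits.

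Your outline of part (b) is essentially right and is the standard Bongartz construction. One correction: $H$ is a hereditary \emph{artin} algebra, so $\End(F)$ is a division ring but need not equal $k$; you should take $r=\dim_{\End(F)}\Ext^1(E,F)$ and an $\End(F)$-basis $\eta_1,\dots,\eta_r$, not a $k$-basis. With that change the reduction of surjectivity from general $F'\in\add F$ to $F'=F$ goes through by additivity, exactly as you say.

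For part (a) your proposal has a genuine gap, and it is more serious than you indicate. First, several of the vanishings you list are not actually forced by the long exact sequences you write: from $0\to K\to E\to I\to 0$ and the approximation property one obtains only that $\Hom(I,F)\to\Hom(E,F)$ is an isomorphism, whence $\Hom(K,F)\cong\Ext^1(I,F)$; this does not give $\Hom(K,F)=0$ or $\Ext^1(I,F)=0$ (those are equivalent to the conclusion you are trying to prove, not inputs to it). Similarly $\Ext^1(C,F)=0$ is not immediate. Second, and more fundamentally, you correctly observe that the derived constraints are symmetric in $K$ and $C$ and do not force one of them to vanish, but the fix you gesture at (``refine a mixed factorization to contradict minimality of $r$'') is not an argument. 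The missing ingredient is the Happel--Ringel lemma: since $\Ext^1(F,E)=0$ and $E,F$ are indecomposable, every nonzero component $f_i\colon E\to F$ of $f$ is already mono or epi. If some $f_i$ is mono then $f$ is mono. The substantive case is when every $f_i$ is epi, and showing that $f$ itself is then epi requires an additional argument — for instance, passing to the rank-two hereditary perpendicular category generated by the exceptional pair $(E,F)$ (as is done in Lemma~\ref{atmostone}) and using the classification of exceptional modules there, or using Bongartz's original argument. Without one of these ingredients the proof does not close.
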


\begin{lemma}\label{mutationlemma}
\begin{enumerate}
\item[(a)]
The operations $\hat \mu_i$ and $\hat \mu_i^{-1}$ are mutual inverses.

\item[(b)] Let $\E$ be an exceptional sequence in $\D$, then $\mu_i (\overline{\E})
= \overline{\hat \mu_i (\E)}$ and 
$\mu_i^{-1} (\overline{\E})
= \overline{{\hat \mu_i^{-1}} (\E)}$.

\item[(c)]\label{three}
If $(E_1,\dots,E_n)$ is a complete exceptional sequence in $\D$, then
$$\mu_{n-1}\dots\mu_1(E_1,\dots,E_n)=(E_2,\dots,E_n,\nu^{-1}E_1).$$

\item[(d)] The operators $\hat \mu_i$ and $\hat \mu_j$ satisfy the braid 
relations.  

\item[(e)] Let $(A,B,C)$ be an exceptional sequence in $\D$.  Let $\hat \mu_1 \hat \mu_2 
(A,B,C)=(C,A^{\ast},B^{\ast})$.  Then $\Ext^\bullet(A,B)\isom\Ext^\bullet(A^{\ast},B^{\ast})$.

\end{enumerate}
\end{lemma}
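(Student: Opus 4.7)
The plan is to prove the five parts in order, using the defining triangle $E_i^* \to E_i \to Z \to E_i^*[1]$ (with $Z = E_{i+1}^r[p]$ by Lemma~\ref{atmostone}) as the central tool, supplemented by comparison with module-level mutation.

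For (a), I would rotate this triangle to $Z[-1] \to E_i^* \to E_i \to Z$ and verify that $Z[-1] \to E_i^*$ is the minimal right $\th(E_{i+1})$-approximation of $E_i^*$, so that the rotated triangle is precisely the one defining $\hat\mu_i^{-1}(E_{i+1}, E_i^*)$. The approximation property, via the long exact sequence from applying $\Hom(W,-)$ for $W \in \th(E_{i+1})$, reduces to the vanishing $\Hom(W, E_i) = 0$, which holds because exceptionality of $(E_i, E_{i+1})$ together with Lemma~\ref{atmostone} gives $\Ext^k(E_{i+1}, E_i) = 0$ for all $k$. Minimality transfers because $Z$ is concentrated in a single degree. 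For (b), the triangle reduces at the level of underlying modules to one of the two sequences in Lemma~\ref{modulestuff}: when $\Hom(\overline{E_i}, \overline{E_{i+1}}) \neq 0$, the approximation is epi or mono by Lemma~\ref{modulestuff}(a) and $p = 0$; when $\Ext^1(\overline{E_i}, \overline{E_{i+1}}) \neq 0$, the extension of Lemma~\ref{modulestuff}(b) supplies the triangle with $p = 1$. In either case $\overline{E_i^*}$ coincides with the module-theoretic mutation, and a parallel analysis handles $\hat\mu_i^{-1}$.

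For (c), iteratively applying $\hat\mu_i$ for $i = 1, \ldots, n-1$ moves $E_1$ rightward through each successive position, producing a complete exceptional sequence of the form $(E_2, \ldots, E_n, X)$. Lemma~\ref{movetoend} forces $X = \nu^{-1} E_1[m]$ for some integer $m$, and the main technical obstacle is pinning down $m = 0$. I would proceed by induction on $n$. The base case $n = 2$ is a direct computation: from the triangle $E_1^* \to E_1 \to Z \to E_1^*[1]$, using $\Ext^{\bullet}(E_2, E_1) = 0$ to kill the boundary terms, one obtains $\Hom(E_1^*, E_1) \simeq \Hom(E_1, E_1) = k$, which together with Serre duality $\Hom(\nu^{-1} E_1[m], E_1) \simeq D\Ext^m(E_1, E_1)$ forces $m = 0$. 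The inductive step would use the braid relations of (d) and compatibility of the Serre functor with the admissible subcategory $E_2^{\perp}$ to reduce to the rank $n-1$ case.

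For (d), I would lift the braid relations of Proposition~\ref{oldstuff}(e) from the module case using part (b): the underlying module sequences of $\hat\mu_i\hat\mu_{i+1}\hat\mu_i(\E)$ and $\hat\mu_{i+1}\hat\mu_i\hat\mu_{i+1}(\E)$ coincide by (b) and Proposition~\ref{oldstuff}(e), so it suffices to check agreement of shifts, which is a direct verification tracing how the shift parameter $p$ propagates through the three successive defining triangles on each side. Commutation $\hat\mu_i\hat\mu_j = \hat\mu_j\hat\mu_i$ for $|i-j| > 1$ is immediate because the operations act on disjoint consecutive pairs. Finally, for (e), observe that $A, B \in C^{\perp} := \{X \in \D : \Hom(C, X[k]) = 0 \text{ for all } k\}$ because $(A, B, C)$ is exceptional. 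The operation $X \mapsto X^*$, defined by the triangle $X^* \to X \to Z_X \to X^*[1]$ with $Z_X$ a minimal left $\th(C)$-approximation, is (by the standard theory of mutation past an exceptional object in a triangulated category with Serre functor) a triangulated equivalence $C^{\perp} \to {}^{\perp} C$. Since $A^*$ and $B^*$ are the images of $A$ and $B$ under this equivalence, and triangulated equivalences preserve $\Ext$ groups, $\Ext^{\bullet}(A, B) \simeq \Ext^{\bullet}(A^*, B^*)$.
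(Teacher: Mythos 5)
Parts (a) and (b) of your proposal track the paper's argument closely and are correct. Part (e) is correct but takes a genuinely higher-level route: you invoke the fact that mutation past an exceptional object $C$ gives a triangulated equivalence $C^{\perp}\to {}^{\perp}C$ and then note that equivalences preserve $\Ext$; the paper instead argues directly from the two exchange triangles via two long exact sequences, which is more elementary and self-contained. Either works.

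The serious problem is in part (c), and it propagates to (d). Your base case $n=2$ is a nice clean Serre-duality argument and is correct (though $\Hom(E_1,E_1)$ is a division ring, not necessarily $k$ — you only need that it is nonzero). But your inductive step does not close, for two reasons. First, you invoke (d) to perform the reduction, while your own argument for (d) is not actually carried out: you claim that after matching the underlying module sequences via (b) and Proposition~\ref{oldstuff}(e), ``it suffices to check agreement of shifts, which is a direct verification,'' but this verification is precisely the content that needs proving, and it is not obviously direct. Notice that the paper proves (d) \emph{from} (c) (restricting to the rank-3 subcategory generated by $\bar X,\bar Y,\bar Z$); if you instead want to deduce (c) from (d), you must give an independent proof of (d), and you have not. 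Second, even granting (d), the reduction to rank $n-1$ runs into the issue that the natural $(n-1)$-term sequence you would apply the inductive hypothesis to lives in a proper admissible subcategory (e.g.\ ${}^{\perp}\langle E_2\rangle$), whose Serre functor $\nu'$ is \emph{not} the restriction of the ambient Serre functor $\nu$. You would then need to show $\nu'^{-1}$ of the mutated object agrees with $\nu^{-1}E_1$, and ``compatibility of the Serre functor with the admissible subcategory'' is exactly the statement in question, not a tool you can appeal to. The paper avoids all of this by directly exhibiting a nonzero composite morphism $f_1\cdots f_{n-1}\colon X_n\to X_1$ via an iterated application of the octahedral axiom, which pins down the shift to zero without any inductive reduction of the ambient category. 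That construction is the essential missing idea in your write-up of (c), and without it (and without a completed proof of (d)) the argument has a genuine gap.
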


\begin{proof} 
(a) Let $$ E_{i+1}^r [p-1] \to E_i^{\ast} \to E_i \to E_{i+1}^r[p]  $$
be the approximation triangle defining $E_i^{\ast}$. Since $\Hom(E_{i+1}, E_i[j])= 0$ for all $j$ 
by Lemma \ref{atmostone},
it is clear that the map $E_{i+1}^r [p-1] \to E_i^{\ast}$ is a right $\th(E_{i+1})$-approximation of $E_i^{\ast}$.
The assertion follows from this and the dual argument.

For (b), let us recall how right mutation $\mu_i$ is defined in
$\module H$. For $(E,F)$ an exceptional pair in $\module H$ we have that 
$\mu_1(E,F) = (F,E^{\ast})$. Let $f:E\rightarrow F^r$ be the minimal  left $\add
  F$-approximation. Then the module 
$E^{\ast}$ is defined as follows: 

$$E^{\ast} = \begin{cases} 
E & \text{if $\Hom(E,F) = 0 = \Ext^1(E,F)$} \\
\ker f & \text{if $\Hom(E,F) \neq 0$ and $f$ is an epimorphism} \\
\coker f & \text{if $\Hom(E,F) \neq 0$ and $f$ is a monomorphism} \\
U & \text{if $\Ext^1(E,F) \neq 0$ with $U$ as in (\ref{modulestuff} (b) )} 
\end{cases}$$

Note that at most one of $\Hom(E,F)$ and $\Ext^1(E,F)$ is non-zero, by Lemma \ref{atmostone}.
It is now straightforward to check that in all cases we have
$\mu_i (\overline{\E})
= \overline{\hat \mu_i (\E)}$. This proves part (b) of Lemma \ref{mutationlemma}.

For (c) consider the
approximation triangles
$$X_j \overset{f_{j-1}}{\rightarrow} X_{j-1} \rightarrow
E_{j}^{r_{j}}[k_j] \to X_j[1]$$
where we let
$X_i$ be the object in the $i$-th place of 
$\mu_{i-1}\dots \mu_1 (E_1,\dots,E_n)$; that is to say, the object obtained
by $i-1$ successive mutations of $E_1$.  
We want to show that $\Hom(X_n, X_1) \neq 0$.
We have a sequence of morphisms 

$$\xymatrix{ X_n \ar[r]^{f_{n-1}}& X_{n-1} \ar[r]^{f_{n-2}}&\cdots \ar[r]^{f_2}
& X_2 \ar[r]^{f_1}&
X_1}$$
We claim that the composition of the morphisms is nonzero.  
Without loss of generality we can replace $E_j^{r_j}[k_j]$ with $E_j^{r_j}$,
and assume that all approximations are non-zero. 
We first consider the composition 
$f_{1} f_{2}$.

Apply the octahedral axiom:

$$\xymatrix{
& X_1 \ar@{=}[r] & X_1 \\
E_3^{r_3}[-1] \ar[r] & X_3 \ar[r]^{f_2}\ar[u] & X_2 \ar[r]\ar[u]^{f_1} & E_3^{r_3}\\
E_3^{r_3}[-1] \ar[r]\ar@{=}[u] & Y \ar[r]\ar[u] & E_2^{r_2}[-1] \ar[r]\ar[u] &
E_3^{r_3}\ar@{=}[u]\\
& X_1[-1] \ar[u]\ar@{=}[r] & X_1[-1]\ar[u] }$$

If the composition $f_1f_2$ is zero then the left column splits, and 
$Y\isom X_3\oplus X_1[-1]$.  
By Lemma \ref{atmostone}, since 
$(X_1,E_3)$ is exceptional, then $\Hom(E_3,X_1)=0$.  

Thus we have a pair of triangles and a commutative diagram, where the
second vertical arrow is projection onto the second summand

$$ \xymatrix{
E_3^{r_3}[-1]\ar[d]\ar[r]& X_3\oplus X_1[-1]\ar[r]\ar[d]&  E_2^{r_2}[-1] \ar[r]\ar@{.>}[d]
 & E_3^{r_3} \ar[d]\\
0\ar[r]&X_1[-1]\ar[r]&X_1[-1]\ar[r]&0}$$
which implies the existence of the dotted arrow.  This forces the right
column in the previous diagram to split, which is a contradiction.  Hence $f_1 f_2 \neq 0$.

The same argument can be iterated, taking the left column from the previous
diagram and using it as the right column for another octahedron.  One then
uses the fact that $\Hom(E_4,X_1)=0$ in a similar way to the above, and obtains $(f_1 f_2) f_3 \neq 0$.
By further iterations one obtains $f_1 f_2 \dots f_{n-1} \neq 0$.  

By Lemma \ref{movetoend},
we know that $X_n=\nu^{-1}X_1[j]$ for some $j$, so the fact that there
is a nonzero morphism from $X_n$ to $X_1$ implies that $X_n=\nu^{-1} X_1$
as desired. This completes the proof of (c).

The nontrivial case of (d) is to show

$$ (\hat\mu_1\hat\mu_{2}\hat\mu_1)(X,Y,Z) = 
(\hat\mu_{2}\hat\mu_1\hat\mu_{2})(X,Y,Z) $$
(and similarly for left mutation).  
The first terms of the sequences on the left and right hand sides are 
both $Z$, and the second terms agree by definition.  The third terms
agree by part (c), after passing to the derived category of 
the rank 3 abelian category containing $\bar X, \bar Y, \bar Z$.  

For (e) consider the exchange triangles
$$B^{\ast} \to B \to C^u \to B^{\ast}[1]$$ and
$$A^{\ast} \to A \to C^v \to A^{\ast}[1]$$

Applying $\Hom(A^{\ast},\ )$ to the first and $\Hom(\ , B)$ to the second triangle
one obtains
the long exact sequences
$$\Ext^{i-1}(A^{\ast}, C^u) \to \Ext^i(A^{\ast},B^{\ast}) \to \Ext^i(A^{\ast},B) \to \Ext^i(A^{\ast},C^u)$$
and
$$\Ext^i(C^v,B) \to \Ext^i(A,B) \to \Ext^i(A^{\ast},B) \to \Ext^{i+1}(C^v,B)$$
The first and last term of both sequences vanish. Hence we obtain the isomorphisms
$\Ext^i(A,B) \simeq \Ext^i(A^{\ast},B) \simeq \Ext^i(A^{\ast},B^{\ast})$ for each $i$.
\end{proof}

From now on, we shall omit the carets from $\hat\mu_i$, $\hat\mu_i^{-1}$.  

\section{From silting objects to $\Hom_{\leq 0}$-configurations via exceptional sequences}\label{s:silt_hom}

In this section we consider exceptional sequences induced by 
silting objects and by $\Hom_{\leq 0}$-configurations in $\D$.
Recall that an exceptional sequence $\Y = (Y_1, Y_2, \dots, Y_n)$ is called silting if
$Y_1 \oplus Y_2 \oplus \dots \oplus Y_n$ is a silting object.
Note that different exceptional sequences can in this way give rise to the same silting object, and
recall that by Lemma \ref{l:silt_exc} any silting object can be obtained from
an exceptional sequence in this way.

Recall also that any $\Hom_{\leq 0}$-configuration gives rise to a (not necessarily unique) exceptional sequence,
and that such exceptional sequences are called $\Hom_{\leq 0}$-configuration
exceptional sequences.

We will prove part (a) of Theorem \ref{maintheorem}: that silting exceptional sequences are in 1--1 correspondence with $\Hom_{\leq 0}$-configuration 
exceptional sequences.
This will be proved by 
considering the following product of mutations:

\begin{equation}\label{order}
\mu^{(n)}_{\rev}=\mu_{n-1}(\mu_{n-2}\mu_{n-1})\dots (\mu_1\dots \mu_{n-1}).  
\end{equation} 
where we sometimes omit the superscript $(n)$ from $\mu^{(n)}_{\rev}$. 
The same sequence of mutations has been considered in \cite{bond} in
a related context.  

Using that the $\mu_i$ satisfy the braid relations, $\mu_{\rev}$ can be expressed
in various ways, in particular as
\begin{equation}\label{altorder}
\mu_{\rev}=\mu_1(\mu_2\mu_1)(\mu_3\mu_2\mu_1)\dots(\mu_{n-1}\dots \mu_1).
\end{equation}

We say that a mutation $\mu_i$ of an exceptional sequence 
$Y=(Y_1,\dots,Y_n)$ is {\em negative} if the left approximation
is of the form:

$$ Y_{i+1}^r[j] \to Y_i^{\ast}\to  Y_i \to 
Y_{i+1}^r[j+1] $$
where $j$ is negative and
{\em non-negative} if $j\geq 0$.  

Similarly, we say that $\mu_i^{-1}$ is {\em negative} if $j$ is negative in the approximation

$$Y_{i-1}^r[j] \rightarrow Y_i \rightarrow Y_i^{\ast} \rightarrow 
Y_{i-1}^r[j+1]$$
and {\em non-negative} if $j\geq 0$.
  
It is immediate from the definitions that if $\mu_i$ is negative, 
then $\mu_i^{-1}$ applied to $\mu_i(Y)$ will also be negative.  

\begin{lemma} \label{goodmut}
Assume that the exceptional sequence $(Y_1,\dots,Y_n)$ is silting.  
Consider the process of applying $\mu_{\rev}$ to it in the order given by (\ref{order}).  
Then each mutation will be negative.
\end{lemma}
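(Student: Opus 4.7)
The plan is to apply $\mu_{\rev}$ chunk by chunk, maintaining an inductive $\Ext$-vanishing invariant on the ``not yet moved'' objects from which negativity of each individual mutation falls out immediately.

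Write $\mu_{\rev}=C_{n-1}C_{n-2}\cdots C_1$ with $C_{k+1}:=\mu_{k+1}\mu_{k+2}\cdots\mu_{n-1}$, so that $C_1$ is applied first.  Let $\E^{(k)}$ denote the exceptional sequence obtained from $(Y_1,\dots,Y_n)$ after applying $C_1,\dots,C_k$.  By an induction parallel to the proof of Lemma \ref{mutationlemma}(c), the chunk $C_{k+1}$ has the effect of transporting the object currently in position $n$ leftward to position $k+1$ (unchanged), replacing each intermediate object by its right-mutation past that transported object.  Setting $Y_i^{(0)}:=Y_i$, $Z_{k+1}:=Y_{n-k}^{(k)}$, and letting $Y_i^{(k+1)}$ be the right-mutation of $Y_i^{(k)}$ past $Z_{k+1}$, we obtain
\[
\E^{(k)}=(Z_1,Z_2,\dots,Z_k,Y_1^{(k)},Y_2^{(k)},\dots,Y_{n-k}^{(k)}).
\]

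I will then prove by induction on $k$ the invariant
\[
(\star_k)\qquad \Ext^s(Y_i^{(k)},Y_{i'}^{(k)})=0 \text{ for all } s>0 \text{ and all } 1\leq i,i'\leq n-k.
\]
The base case $(\star_0)$ is precisely the silting hypothesis on $Y_1\oplus\cdots\oplus Y_n$.  For the inductive step, fix $1\leq i<i'\leq n-k-1$.  Since $(Y_i^{(k)},Y_{i'}^{(k)},Y_{n-k}^{(k)})$ is a subsequence of $\E^{(k)}$, it is exceptional, and tracing the definitions shows
\[
\mu_1\mu_2(Y_i^{(k)},Y_{i'}^{(k)},Y_{n-k}^{(k)})=(Y_{n-k}^{(k)},Y_i^{(k+1)},Y_{i'}^{(k+1)}).
\]
Lemma \ref{mutationlemma}(e) then yields $\Ext^\bullet(Y_i^{(k+1)},Y_{i'}^{(k+1)})\isom\Ext^\bullet(Y_i^{(k)},Y_{i'}^{(k)})$, vanishing in positive degrees by $(\star_k)$; together with exceptionality of each $Y_i^{(k+1)}$ (the diagonal case), this gives $(\star_{k+1})$.

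Given the invariant, negativity of each individual mutation is immediate.  Within $C_{k+1}$, the mutation $\mu_p$ (for $p=n-1,n-2,\dots,k+1$, applied in this order) acts on the pair at positions $p,p+1$, which is $(Y_{p-k}^{(k)},Y_{n-k}^{(k)})$ throughout the chunk, since $Z_{k+1}$ travels leftward undisturbed.  Its defining approximation has the form $Y_{p-k}^{(k)}\to (Y_{n-k}^{(k)})^r[\ell+1]$, nonzero only where $\Ext^{\ell+1}(Y_{p-k}^{(k)},Y_{n-k}^{(k)})\neq 0$ (or $r=0$, which is vacuously negative).  By $(\star_k)$ this forces $\ell+1\leq 0$, hence $\ell<0$, i.e.\ $\mu_p$ is negative.

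The main obstacle is the bookkeeping in the structural description of $\E^{(k)}$ together with the verification that the triple fed into Lemma \ref{mutationlemma}(e) produces, in its second and third slots, precisely the objects $Y_i^{(k+1)},Y_{i'}^{(k+1)}$ defined inductively as right-mutations past $Z_{k+1}$.  Once these identifications are explicit, the rest of the argument is a direct application of the lemma and a single implication from the vanishing $\Ext^s=0$ ($s>0$) to ``approximation lives in non-positive degree''.
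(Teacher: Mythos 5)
Your proof is correct and follows essentially the same strategy as the paper's: apply the first chunk $\mu_1\cdots\mu_{n-1}$, use Lemma \ref{mutationlemma}(e) to deduce that the mutated objects $(Y_1^\ast,\dots,Y_{n-1}^\ast)$ again have vanishing positive $\Ext$ among themselves, and induct. You have simply spelled out the inductive invariant $(\star_k)$ and the chunk-by-chunk bookkeeping explicitly, where the paper's proof is more terse.
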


\begin{proof} 
Note that $\mu_1 \dots \mu_{n-1}(Y_1,\dots,Y_n) = (Y_n, Y_1^{\ast}, \dots, Y_{n-1}^{\ast})$.   
Since $Y$ is a silting object, each of the mutations
$Y_i\rightarrow Y_n^r[j]$ is negative. The claim can now be proved by induction,
after using Lemma \ref{mutationlemma} (e), which
guarantees that $(Y_1^{\ast}, \dots, Y_{n-1}^{\ast})$ form a silting object in the subcategory
of $\D$ which they generate. 
\end{proof}

\begin{lemma}\label{l:from_silting}
If the exceptional sequence $\Y$ is silting, then
the exceptional sequence $\mu_{\rev}(\Y)$ is a $\Hom_{\leq 0}$-configuration.
\end{lemma}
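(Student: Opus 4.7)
The plan is to proceed by induction on $n$; the base case $n=1$ is immediate, since a single exceptional object is simultaneously silting and a $\Homneg$-configuration. For the inductive step, I would use the decomposition
$$\mu_{\rev}^{(n)} = \mu_{\rev}^{(n-1)} \circ (\mu_{n-1}\mu_{n-2}\cdots\mu_1),$$
obtained from (\ref{altorder}) by peeling off the innermost factor; here $\mu_{\rev}^{(n-1)}$ involves only $\mu_1,\ldots,\mu_{n-2}$ and so acts on the first $n-1$ positions, leaving the last one unchanged. By Lemma~\ref{mutationlemma}(c), the inner factor sends $(Y_1,\dots,Y_n)$ to $(Y_2,\dots,Y_n,\nu^{-1}Y_1)$, and the truncation $(Y_2,\dots,Y_n)$ is a silting exceptional sequence inside the thick subcategory $\C := \th(Y_2,\dots,Y_n)$, which is equivalent to $D^b(H')$ for a connected hereditary algebra $H'$ of rank $n-1$ by the standard result on thick subcategories generated by exceptional sequences.

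Applying the inductive hypothesis within $\C$ to $(Y_2,\dots,Y_n)$ gives $\mu_{\rev}^{(n-1)}(Y_2,\dots,Y_n) = (X_2,\dots,X_n)$, a $\Homneg$-configuration exceptional sequence in $\C$; since $\C$ is a full triangulated subcategory of $\D$, all the relevant $\Hom$- and $\Ext$-groups coincide, so $(X_2,\dots,X_n)$ is a $\Homneg$-configuration in $\D$ as well. Combined with the untouched last coordinate, this yields
$$\mu_{\rev}^{(n)}(Y_1,\dots,Y_n) = (X_2,\dots,X_n,\nu^{-1}Y_1).$$
It remains to verify the $\Homneg$-configuration conditions for this full sequence. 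The internal conditions among $(X_2,\dots,X_n)$ come from the induction; the full sequence is exceptional in $\D$ since mutations preserve exceptional sequences, and Lemma~\ref{atmostone} then gives $\Ext^t(\nu^{-1}Y_1,X_i)=0$ for all $t$ and all $i\geq 2$. The non-trivial remaining obligation is the Hom-vanishing
$$\Hom(X_i,\nu^{-1}Y_1[t]) = 0 \qquad \text{for all } t\leq 0 \text{ and all } i=2,\dots,n,$$
which simultaneously accounts for the orthogonality condition (H2) at $t=0$ and the negative-$\Ext$ vanishing (H3) at $t<0$.

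Establishing this last vanishing is the main obstacle. My plan is to track it through the mutations comprising $\mu_{\rev}^{(n-1)}$, each of which is negative by Lemma~\ref{goodmut} applied inductively to the silting sequence $(Y_2,\dots,Y_n)$ inside $\C$. First, for the starting sequence $(Y_2,\dots,Y_n,\nu^{-1}Y_1)$, I would use that each pair $(Y_j,\nu^{-1}Y_1)$ is exceptional, so by Lemma~\ref{atmostone} there is at most one degree $t$ with $\Hom(Y_j,\nu^{-1}Y_1[t])\neq 0$; Serre duality rewrites this as $D\Hom(Y_1,\nu^2 Y_j[-t])$, and combining with the silting vanishing of $\Ext^s(Y_1,Y_j)$ for $s>0$ should pin down the unique nonzero degree, when it exists, to be strictly positive. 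For each subsequent negative mutation $\mu_i$, defined by a triangle $Z_i^{\ast}\to Z_i\to Z_{i+1}^r[p]\to Z_i^{\ast}[1]$ with $p\leq 0$, applying $\Hom(-,\nu^{-1}Y_1[t])$ produces a long exact sequence in which the non-positivity of $p$ shifts degrees in a controlled way and allows the invariant to propagate from $Z_i$ and $Z_{i+1}$ to $Z_i^{\ast}$. The hardest point, which I expect to require the strengthened form of the invariant (that the unique nonzero degree is strictly positive, not merely that $\Hom$ vanishes at $t\leq 0$), is closing the propagation at $t=0$, where the naive LES estimate just falls short and must be supplied by the additional information on where the contributing degree actually sits.
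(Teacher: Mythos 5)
Your overall plan — induction via peeling off the innermost factor of $\mu_{\rev}$, transferring to a rank-$(n-1)$ thick subcategory, and then tracking the remaining orthogonality through negative mutations — mirrors the paper's strategy, but you have chosen the ``wrong'' decomposition, and this creates a gap that you correctly identify but do not close.

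You peel off $(\mu_{n-1}\cdots\mu_1)$ from (\ref{altorder}), which sends $(Y_1,\dots,Y_n)$ to $(Y_2,\dots,Y_n,\nu^{-1}Y_1)$: the peeled-off object $Y_1$ is \emph{twisted by $\nu^{-1}$} and sits on the right, so the obligation you must propagate is contravariant, of the form $\Hom(-,\nu^{-1}Y_1[t])=0$ for $t\leq 0$. Applying $\Hom(-,\nu^{-1}Y_1[t])$ to a negative approximation triangle $Z_i^{\ast}\to Z_i\to Z_{i+1}^r[p]\to Z_i^{\ast}[1]$ with $p\leq 0$ produces the third term $\Hom\bigl(Z_{i+1}^r[p-1],\,\nu^{-1}Y_1[t]\bigr)=\Hom\bigl(Z_{i+1}^r,\,\nu^{-1}Y_1[t-p+1]\bigr)$, and $t-p+1\geq 1$, which is exactly outside the range that the invariant controls. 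You see this and propose a strengthened invariant; but checking that strengthened invariant at the base already meets resistance: Serre duality turns $\Hom(Y_j,\nu^{-1}Y_1[t])$ into $D\Hom(Y_1,\nu^2Y_j[-t])$ with the unwanted $\nu^2$, so it does not reduce to the silting hypothesis $\Ext^{>0}(Y_1,Y_j)=0$. One is forced into a delicate case analysis (e.g.\ when $Y_1$ is a shifted injective and $Y_j$ a shifted projective in the same degree) that the paper simply never has to confront.

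The paper instead peels off the innermost factor $(\mu_1\cdots\mu_{n-1})$ from (\ref{order}). This moves $Y_n$ to the \emph{front, unchanged}, giving $(Y_n,Y_1^{\ast},\dots,Y_{n-1}^{\ast})$; the remaining obligation is the \emph{covariant} condition $\Hom(Y_n,-[t])=0$ for $t\leq 0$. Its base case follows immediately from the silting ordering of Lemma~\ref{l:silt_exc} together with one application of $\Hom(Y_n,-)$ to the negative approximation triangle defining $Y_i^{\ast}$. And the propagation is automatic: applying $\Hom(Y_n,-)$ to a negative triangle $E_{i-1}^r[j]\to E_i^{\ast}\to E_i\to E_{i-1}^r[j+1]$ with $j<0$ gives outer terms $\Hom(Y_n,E_{i-1}^r[j+t])$ and $\Hom(Y_n,E_i[t])$, and for $t\leq 0$ both indices are $\leq 0$ (indeed $j+t<0$), so the invariant closes without any sharpening. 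In short, the asymmetry of the two decompositions of $\mu_{\rev}$ matters: one keeps a silting summand intact and makes the $\Hom$-vanishing a one-line long-exact-sequence check, while the other introduces a $\nu^{-1}$-twisted object whose orthogonality is genuinely harder to control. Switch to (\ref{order}) and the inductive step goes through.
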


\begin{proof}
The proof is by induction. First consider the case $n= 2$. Let $(E,F)$ be an exceptional sequence,
and apply $\Hom(F,\ )$ to the approximation triangle 
$$E^{\ast} \to E \to F^r \to E^{\ast}[1]$$                                         
It follows that $(F,E^{\ast})$ is a $\Hom_{\leq 0}$-configuration.

Now, let $n>2$.
We use the presentation of $\mu_{\rev}$ defined
by (\ref{order}).  
After applying $\mu_1 \dots \mu_{n-1}$, we
obtain the exceptional sequence $(Y_n,Y_1^{\ast},\dots,Y_{n-1}^{\ast})$.  
Then $\Hom(Y_n,Y_i^{\ast}[j])=0$ for $i<n$, $j\leq 0$.  By Lemma \ref{mutationlemma}
(e), we know that $(Y_1^{\ast},\dots,Y_{n-1}^{\ast})$ is silting.  
By induction, applying $\mu_{\rev}^{(n-1)}$ to this silting exceptional
sequence
will yield a $\Homneg$-configuration. We know that the
mutations which are used are negative, that is to say, of the form

$$E_{i-1}[j]\rightarrow E_{i}^{\ast} \rightarrow E_i \rightarrow E_{i-1}[j+1]$$
with $j<0$. It follows that if we know that $\Hom(Y_n,E_i[t])$
and $\Hom(Y_n,E_{i-1}[t])$ vanish for $t \leq 0$, then also
 $\Hom(Y_n,E_i^{\ast}[t])$
and $\Hom(Y_n,E_{i-1}^{\ast}[t])$ vanish for $t \leq 0$. 
This shows that the mutations $\mu^{(n-1)}_{\rev}$ which we apply to reverse
$(Y_1^{\ast},\dots,Y_{n-1}^{\ast})$ preserve the property that
$\Ext^t(Y_n, \ )= 0$ for $t \leq 0$, and thus we are done.  
\end{proof}

\begin{lemma}\label{othermut}
Let $(Y_1,\dots,Y_n)$ be a $\Hom_{\leq 0}$-configuration exceptional sequence.  Consider the process 
of applying $\mu_{\rev}$ in the order given by (\ref{order}).  
Each mutation will be non-negative.
\end{lemma}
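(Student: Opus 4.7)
The plan is to mimic the structure of Lemma \ref{goodmut}, proceeding by induction on $n$ via the presentation
\[
\mu_{\rev}^{(n)} = \mu_{n-1}(\mu_{n-2}\mu_{n-1}) \cdots (\mu_1 \mu_2 \cdots \mu_{n-1}).
\]
Read right-to-left, the inner group $\mu_1 \mu_2 \cdots \mu_{n-1}$ is applied first and converts $(Y_1, \dots, Y_n)$ into $(Y_n, Y_1^{\ast}, \dots, Y_{n-1}^{\ast})$, exactly as observed in the proof of Lemma \ref{l:from_silting}. For the base case $n = 2$, $\mu_{\rev}^{(2)} = \mu_1$, and the approximation $Y_1 \to Y_2^r[p]$ has $p \geq 1$ because $\Hom(Y_1, Y_2[q]) = 0$ for $q \leq 0$ by (H2) and (H3).

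For the inductive step, I first check that each mutation in the first round $\mu_1 \cdots \mu_{n-1}$ is non-negative. At each stage the mutation acts on a pair of the form $(Y_i, Y_n)$, since successive mutations slide $Y_n$ leftward without disturbing the yet-unmutated $Y_j$'s; the corresponding approximation $Y_i \to Y_n^{r_i}[p_i]$ is forced to have $p_i \geq 1$ because $\Hom(Y_i, Y_n[q]) = 0$ for $q \leq 0$ by (H2) and (H3) of the original sequence. This also shows that each $Y_i^{\ast}$ is determined purely by the pair $(Y_i, Y_n)$, a fact I rely on below.

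Next, I verify that $(Y_1^{\ast}, \dots, Y_{n-1}^{\ast})$ is itself a $\Hom_{\leq 0}$-configuration exceptional sequence in the thick subcategory it generates, which is a bounded derived category of a hereditary algebra of rank $n - 1$, so that the inductive hypothesis applies. For indices $i < j < n$, Lemma \ref{mutationlemma}(e) applied to the three-term subsequence $(Y_i, Y_j, Y_n)$ gives $\Ext^{\bullet}(Y_i^{\ast}, Y_j^{\ast}) \cong \Ext^{\bullet}(Y_i, Y_j)$, so the vanishings in (H2), (H3) transfer from the original. For $i > j$, Lemma \ref{atmostone} applied to the exceptional pair $(Y_j^{\ast}, Y_i^{\ast})$ kills all Ext. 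Condition (H1) holds by construction, and (H4) follows from $(Y_1^{\ast}, \dots, Y_{n-1}^{\ast})$ being an exceptional sequence (remark after Lemma \ref{hfour}). Induction then yields that $\mu_{\rev}^{(n-1)}$ acting on $(Y_1^{\ast}, \dots, Y_{n-1}^{\ast})$ consists of non-negative mutations, and a harmless index shift identifies these with the remaining mutations of $\mu_{\rev}^{(n)}$ on $(Y_n, Y_1^{\ast}, \dots, Y_{n-1}^{\ast})$. Since non-negativity is intrinsic to the approximation triangle, independent of whether one works in $\D$ or in the thick subcategory, this completes the proof.

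The main obstacle is the middle step: establishing that $(Y_1^{\ast}, \dots, Y_{n-1}^{\ast})$ is again a $\Hom_{\leq 0}$-configuration. This hinges on Lemma \ref{mutationlemma}(e) together with the observation that each $Y_i^{\ast}$ computed inside the full sequence matches the one obtained by restricting to the three-element subsequence $(Y_i, Y_j, Y_n)$, which is valid precisely because each mutation in the first round touches only the pair $(Y_i, Y_n)$.
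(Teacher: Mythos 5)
Your proof is correct and follows essentially the same route as the paper: the paper's proof of Lemma~\ref{othermut} explicitly defers to the argument of Lemma~\ref{goodmut}, which inducts via the decomposition~(\ref{order}), observes that the first round $\mu_1\cdots\mu_{n-1}$ of mutations is negative (here, non-negative) because of the defining property of the sequence, and invokes Lemma~\ref{mutationlemma}(e) to see that $(Y_1^{\ast},\dots,Y_{n-1}^{\ast})$ inherits the relevant property in the subcategory it generates. You have simply filled in the details that the paper leaves implicit, including the base case and the verification that each first-round mutation involves only the pair $(Y_i,Y_n)$.
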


\begin{proof} The mutations which move $Y_n$ are all non-negative
since we begin with a $\Hom_{\leq 0}$-configuration.  The result holds by
induction, as in the proof of Lemma \ref{goodmut}.
\end{proof}

\begin{lemma}\label{l:from_config} 
If $\Y$ is a $\Hom_{\leq 0}$-configuration exceptional sequence, then 
the exceptional sequence $\mu_{\rev}(\Y)$ is 
silting.  \end{lemma}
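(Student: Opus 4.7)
The plan is to mirror the proof of Lemma \ref{l:from_silting}, proceeding by induction on $n$ and using the presentation (\ref{order}) of $\mu_{\rev}$.  For the base case $n=2$, given an exceptional pair $(E,F)$ satisfying (H2) and (H3), Lemma \ref{othermut} forces the defining triangle of $\mu_1(E,F)=(F,E^{\ast})$ to have the non-negative form $E^{\ast}\to E\to F^r[p]\to E^{\ast}[1]$ with $p\geq 1$; applying $\Hom(F,-)$ and using $\Ext^\bullet(F,E)=0$ (Lemma \ref{atmostone}) together with $\Ext^s(F,F)=0$ for $s\neq 0$ yields $\Ext^i(F,E^{\ast})=0$ for $i\geq 1$, so $(F,E^{\ast})$ is silting.

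For the inductive step I would first apply the initial block $\mu_1\cdots\mu_{n-1}$, producing the exceptional sequence $(Y_n,Y_1^{\ast},\dots,Y_{n-1}^{\ast})$.  The main internal claim is that $(Y_1^{\ast},\dots,Y_{n-1}^{\ast})$ is again a $\Hom_{\leq 0}$-configuration exceptional sequence in the thick triangulated subcategory it generates, which is equivalent to the bounded derived category of a hereditary algebra with $n-1$ simples.  This is the counterpart, for $\Hom_{\leq 0}$-configurations, of the silting-preservation invoked in Lemma \ref{l:from_silting}, and it follows from iterated application of Lemma \ref{mutationlemma}(e): for $a<b\leq n-1$ one obtains $\Ext^\bullet(Y_a^{\ast},Y_b^{\ast})\cong\Ext^\bullet(Y_a,Y_b)$, so (H2) and (H3) transfer directly from $\Y$, and (H4) is automatic since any $\Ext^1$-cycle among the summands of an exceptional sequence would require an $\Ext^1$ against the exceptional order, contradicting Lemma \ref{atmostone}.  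The induction hypothesis then supplies a silting sequence $(Z_1,\dots,Z_{n-1}):=\mu_{\rev}^{(n-1)}(Y_1^{\ast},\dots,Y_{n-1}^{\ast})$.

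Finally I would confirm that $(Y_n,Z_1,\dots,Z_{n-1})$ is silting.  The exceptionality of $Y_n$, the silting of $(Z_1,\dots,Z_{n-1})$, and Lemma \ref{atmostone} applied to the exceptional sequence $(Y_n,Z_1,\dots,Z_{n-1})$, which gives $\Ext^s(Z_j,Y_n)=0$ for all $s$, reduce the problem to showing $\Ext^s(Y_n,Z_j)=0$ for $s\geq 1$.  Immediately after the initial block this holds: the triangle $Y_j^{\ast}\to Y_j\to Y_n^{r_j}[p_j]\to Y_j^{\ast}[1]$ combined with $\Ext^\bullet(Y_n,Y_j)=0$ from the original exceptional sequence forces $\Ext^s(Y_n,Y_j^{\ast})=0$ for $s\geq 1$.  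By Lemma \ref{othermut} the subsequent mutations of $\mu_{\rev}^{(n-1)}$ are non-negative, so for any exchange triangle $E_i^{\ast}\to E_i\to E_{i+1}^{r}[p]\to E_i^{\ast}[1]$ with $p\geq 1$, applying $\Hom(Y_n,-)$ and noting that the relevant shifts $t+p-1$ and $t+p$ remain $\geq 1$ for $t\geq 1$ carries the vanishing through.  I expect the delicate step to be the internal claim of paragraph two, since (H4) must be reverified after mutation; the Ext-preservation afforded by Lemma \ref{mutationlemma}(e) renders the corresponding check for (H2) and (H3) essentially formal.
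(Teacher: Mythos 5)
Your proof is correct and takes essentially the same route as the paper: the same induction on $n$, the same initial block $\mu_1\cdots\mu_{n-1}$ producing $(Y_n,Y_1^{\ast},\dots,Y_{n-1}^{\ast})$, the same appeal to Lemma \ref{mutationlemma}(e) to see that $(Y_1^{\ast},\dots,Y_{n-1}^{\ast})$ is again a $\Hom_{\leq 0}$-configuration, and the same final observation that the non-negative mutations making up $\mu_{\rev}^{(n-1)}$ (Lemma \ref{othermut}) preserve the vanishing of $\Ext^{s}(Y_n,-)$ for $s\geq 1$. You spell out a few steps the paper leaves implicit, such as the passage to the rank-$(n-1)$ subcategory and the automatic validity of (H4) for exceptional sequences, but the underlying argument is the same.
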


\begin{proof} The proof is by induction, and the statement is easily verified in the case $n=2$.  
Assume $n>2$. We prove that $\mu_{\rev}(\Y)$ is silting using the order 
$(\ref{order})$. We apply $\mu_1 \mu_2 \dots \mu_{n-1}$ to
obtain the exceptional sequence $(Y_n, Y_1^{\ast}, \dots, Y_{n-1}^{\ast})$, and hence $\Ext^j(Y_n,Y_i^{\ast})= 0$  
for $j\geq 1$ and $1\leq i \leq n-1$.  
The sequence $(Y_1^{\ast}, \dots, Y_{n-1}^{\ast})$ is a $\Hom_{\leq 0}$-configuration,  by Lemma \ref{mutationlemma}, and hence applying
$\mu^{(n-1)}_{\rev}$ to this it will give a silting object by induction.

We then have to check that the mutations $\mu^{(n-1)}_{\rev}$ used in reversing the
$Y_i^{\ast}$'s preserve the property of $\Ext^j(Y_n,\ )$ vanishing for $j>1$.  

By Lemma \ref{othermut}, the approximations are of the form 
$$E_{i-1}[j]\rightarrow E_{i}^{\ast} \rightarrow E_i \rightarrow E_{i-1}[j+1]$$
with $j\geq 0$. The desired result is immediate.  
\end{proof}

\begin{proposition}\label{p:double_mu}
Let $\Y$ be an exceptional sequence. Then $\mu_{\rev}(\mu_{\rev}(\Y))=\nu^{-1}(\Y)$.
\end{proposition}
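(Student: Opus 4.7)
The plan is to deduce the identity from two ingredients: the iteration of Lemma \ref{mutationlemma}(c), together with a classical braid-theoretic identity. Concretely, I will show that $\mu_{\rev}^2 = (\mu_{n-1}\cdots\mu_1)^n$ as operations on complete exceptional sequences, and that the right-hand side acts as $\nu^{-1}$.

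For the iterated application of Lemma \ref{mutationlemma}(c): that lemma gives $(\mu_{n-1}\cdots\mu_1)(E_1,\dots,E_n)=(E_2,\dots,E_n,\nu^{-1}E_1)$, so one application moves the leftmost entry to the right end with $\nu^{-1}$ applied. Since the result is again a complete exceptional sequence, the same lemma applies, and an easy induction shows that after $k$ applications one has $(E_{k+1},\dots,E_n,\nu^{-1}E_1,\dots,\nu^{-1}E_k)$. Taking $k=n$ yields $(\mu_{n-1}\cdots\mu_1)^n(\Y)=\nu^{-1}(\Y)$.

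For the braid identity $\mu_{\rev}^2=(\mu_{n-1}\cdots\mu_1)^n$: the expression (\ref{order}) for $\mu_{\rev}$ is the standard reduced expression for the Garside fundamental element $\Delta$ of the braid group $B_n$, and its square is the full twist, which classically equals $(\sigma_{n-1}\cdots\sigma_1)^n$ in $B_n$. By Lemma \ref{mutationlemma}(d), the operators $\mu_i$ on exceptional sequences satisfy the braid relations, so this word identity descends to an equality of operations on exceptional sequences. Combining this with the previous paragraph gives $\mu_{\rev}^2(\Y)=(\mu_{n-1}\cdots\mu_1)^n(\Y)=\nu^{-1}(\Y)$, as desired.

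The main obstacle is justifying the braid identity $\Delta^2=(\sigma_{n-1}\cdots\sigma_1)^n$. Although classical, a self-contained verification can be given by induction on $n$: the case $n=2$ is immediate since $\mu_{\rev}=\mu_1$ and both sides equal $\mu_1^2$, while for $n=3$ one rewrites $(\mu_2\mu_1\mu_2)^2=(\mu_2\mu_1\mu_2)(\mu_1\mu_2\mu_1)=(\mu_2\mu_1)^3$ using the single braid relation $\mu_2\mu_1\mu_2=\mu_1\mu_2\mu_1$. The general step uses only braid relations, but the combinatorics of the reduction is the most delicate point; alternatively, one may just quote the fact that $\Delta^2$ is the full twist in $B_n$ and note that both $\mu_{\rev}^2$ and $(\mu_{n-1}\cdots\mu_1)^n$ represent this element.
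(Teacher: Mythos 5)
Your proof is correct and takes essentially the same route as the paper: both iterate Lemma~\ref{mutationlemma}(c) to get that $(\mu_{n-1}\cdots\mu_1)^n$ acts as $\nu^{-1}$, then reduce to the braid-word identity $\mu_{\rev}^2=(\mu_{n-1}\cdots\mu_1)^n$, which by Lemma~\ref{mutationlemma}(d) descends to an equality of operations. Where you differ is only in how that braid-word identity is justified. You appeal to the classical Garside-theoretic fact that $\Delta^2$ equals the full twist $(\sigma_{n-1}\cdots\sigma_1)^n$, and you flag the inductive braid-relation reduction as the delicate point. The paper sidesteps that concern by giving an explicit factorization of $(\mu_{n-1}\cdots\mu_1)^n$ as $\mu'\mu''$ with
$\mu'=(\mu_{n-1}\cdots\mu_1)(\mu_{n-1}\cdots\mu_2)\cdots(\mu_{n-1}\mu_{n-2})(\mu_{n-1})$
and
$\mu''=(\mu_1)(\mu_2\mu_1)\cdots(\mu_{n-1}\cdots\mu_1)$,
and observing that the rearrangement of $(\mu_{n-1}\cdots\mu_1)^n$ into this concatenation uses only \emph{commutation} relations (move the rightmost letter of the second block left, the two rightmost of the third block, and so on), while $\mu'$ and $\mu''$ are, respectively, expression (\ref{order}) and expression (\ref{altorder}) for $\mu_{\rev}$. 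That keeps the argument self-contained and avoids the general $\Delta^2$-is-the-full-twist citation you rely on; your version is fine, but if you want to stay elementary, the paper's commutation-only bookkeeping is the cleaner way to close the gap you were worried about.
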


\begin{proof} 
We know that the effect of $(\mu_{n-1}\dots\mu_{1})$ is to
remove the left end term $Y_1$ from the exceptional sequence and replace it
with $\nu^{-1}(Y_1)$ at the right end. Thus, the effect of 
$(\mu_{n-1}\dots\mu_1)^n$ is to apply $\nu^{-1}$ 
to every element of the exceptional
sequence, maintaining the same order.  

Consider the operation
\begin{equation}\label{mess}
(\mu_{n-1}\dots\mu_{1})(\mu_{n-1}\dots \mu_{1})
\dots(\mu_{n-1}\dots\mu_{1})
\end{equation}
with $n$ repetitions of the product $(\mu_{n-1}\dots \mu_{1})$.  
This operation can be written as the composition of the following two operations
$$\mu' = (\mu_{n-1}\dots \mu_{1})(\mu_{n-1}\dots
\mu_{2})\dots (\mu_{n-1}\mu_{n-2})(\mu_{n-1})$$
and
$$\mu'' = (\mu_{1})(\mu_{2}\mu_{1})\dots
(\mu_{n-1}\dots \mu_{1}).$$
This can be done using only commutation relations, by taking the expression
(\ref{mess}) and 
moving to the left
the rightmost generator in the second parenthesis, the two rightmost in the third parenthesis,
etc. (counting from the left).  Both $\mu'$ and $\mu''$ are expressions for $\mu_{\rev}$.

\end{proof}

Summarizing, we have proved part (a) of Theorem \ref{maintheorem}, i.e. we have the following.

\begin{theorem}\label{parta}
The operation $\mu_{\rev}$ gives a bijection between 
silting exceptional sequences and $\Hom_{\leq 0}$-configuration exceptional sequences.
\end{theorem}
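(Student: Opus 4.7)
The plan is to deduce the theorem directly from the three results immediately preceding it: Lemmas \ref{l:from_silting} and \ref{l:from_config} and Proposition \ref{p:double_mu}. These three statements already do the substantive work, and what remains is a formal packaging argument.

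First, I would invoke Lemma \ref{l:from_silting} to see that $\mu_{\rev}$ restricts to a map $f$ from silting exceptional sequences into $\Hom_{\leq 0}$-configuration exceptional sequences, and invoke Lemma \ref{l:from_config} to see that $\mu_{\rev}$ also restricts to a map $g$ in the opposite direction. Both compositions $g\circ f$ and $f \circ g$ are thus well-defined self-maps of the set of silting, resp.\ $\Hom_{\leq 0}$-configuration, exceptional sequences.

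Next, by Proposition \ref{p:double_mu}, both $g\circ f$ and $f\circ g$ equal the entrywise application of the autoequivalence $\nu^{-1}$. I would then verify that $\nu^{-1}$ acts as a bijection on each of the two sets separately: since $\nu$ is an autoequivalence of $\D$, it preserves all $\Hom$ and $\Ext$ groups, so it carries silting objects to silting objects, and it preserves each of the conditions (H1)--(H4) defining a $\Hom_{\leq 0}$-configuration. This verification is routine but should not be skipped.

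Finally, since $g\circ f$ is a bijection on silting exceptional sequences, $f$ is injective, and since $f\circ g$ is a bijection on $\Hom_{\leq 0}$-configuration exceptional sequences, $f$ is surjective; hence $f = \mu_{\rev}$ is a bijection between the two sets. There is no serious obstacle here: the only potentially non-trivial point is the preservation of (H1)--(H4) and of silting-ness under the Serre functor, and both follow from $\nu$ being an equivalence. All the real work has already been carried out in Lemmas \ref{l:from_silting}, \ref{l:from_config} and Proposition \ref{p:double_mu}.
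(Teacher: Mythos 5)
Your proposal is correct and follows the paper's own proof step for step: both deduce the theorem from Lemmas \ref{l:from_silting} and \ref{l:from_config} together with Proposition \ref{p:double_mu}, and both close the argument by noting that $\nu^{-1}$, being an autoequivalence, is a bijection preserving the silting and $\Hom_{\leq 0}$-configuration properties. You simply spell out the formal bookkeeping (injectivity from $g\circ f$, surjectivity from $f\circ g$) that the paper leaves implicit.
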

\begin{proof}
This is a direct consequence of Lemmas  \ref{l:from_silting} and \ref{l:from_config} and Proposition \ref{p:double_mu}, since
obviously $\nu$ gives a bijection on the set of all exceptional sequences.
\end{proof}

\section{The bijection between silting objects and $\Hom_{\leq 0}$-configurations}\label{s:bijection}

We have given a bijection from exceptional sequences coming from 
silting objects to exceptional sequences coming from $\Hom_{\leq 0}$-configurations.
We would like to show that this also determines a bijection from
silting objects to $\Hom_{\leq 0}$-configurations. This is not immediate from Theorem \ref{parta}, because
there can be more than one way to order a silting object or a 
$\Hom_{\leq 0}$-configuration into an exceptional sequence.  

We proceed as follows. Suppose we have a silting object $T$, and consider
some exceptional sequence $\E = (E_1, \dots, E_n)$ obtained from it.  
Consider the braid group $B_{n}=\langle \sigma_1,\dots,\sigma_{n-1}\rangle$,
where the action of $B_{n}$ on exceptional sequences is defined by having 
$\sigma_i$ act like $\mu_i$.  

Let $R_{\E}=\{(i,j)\mid \Ext^\bullet(E_i,E_j)=0\}$.  
Clearly, if we know $R_{\E}$, we know exactly which reorderings of $\E$ will be
exceptional sequences. 
Let $\stab_{\E} = \{\sigma \in B_{n} \mid \sigma \E = \E\} $ be the stabilizer.  

\begin{lemma}
$(i,j)\in R_{\E}$ if and only if $\mu^{-1}_{j-1} \dots \mu^{-1}_{i+1}(\mu_i)^2\mu_{i+1}\dots 
\mu_{j-1}\in \stab_{\E}$.  
\end{lemma}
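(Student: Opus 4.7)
The plan is to track the effect of the composite mutation on the sequence $\E$ itself, rather than working in the braid group abstractly. The key preliminary observation is that for any exceptional pair $(X,Y)$ in $\D$, the double mutation $\mu_i^2$ fixes the pair if and only if $\Ext^\bullet(X,Y)=0$: when this $\Ext$ vanishes, the approximation triangle defining $\mu_i$ degenerates, so $\mu_i$ acts as the bare swap $(X,Y)\mapsto(Y,X)$, and then applying $\mu_i$ again swaps back (using that $\Ext^\bullet(Y,X)=0$ holds automatically for exceptional pairs by Lemma \ref{atmostone}); conversely, if the $\Ext$ does not vanish, the triangle is non-trivial, so $X^{\ast}\not\cong X$, and $\mu_i^2(X,Y)=(X^{\ast},Y')$ for some $Y'$ has its first coordinate genuinely altered.

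First I would apply $\mu_{i+1}\mu_{i+2}\cdots\mu_{j-1}$ to $\E$, with the rightmost factor applied first (as in Lemma \ref{mutationlemma}(c)). An easy induction, using that the mutation $(E_k,E_{k+1})\mapsto(E_{k+1},E_k^{\ast})$ leaves the original right-hand element intact as the new left-hand element, shows that at each successive step $E_j$ moves one position to the left without being altered. Hence the resulting sequence $\F$ agrees with $\E$ outside positions $i,\ldots,j$, has $E_i$ at position $i$ and $E_j$ at position $i+1$, and carries certain mutated objects at positions $i+2,\ldots,j$.

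Next I would apply $\mu_i^2$ to $\F$: this affects only positions $i$ and $i+1$, which carry the exceptional pair $(E_i,E_j)$. By the preliminary observation, this pair is fixed precisely when $\Ext^\bullet(E_i,E_j)=0$, that is, precisely when $(i,j)\in R_{\E}$. Finally I would apply $\mu_{j-1}^{-1}\cdots\mu_{i+1}^{-1}$. If $(i,j)\in R_{\E}$, then $\mu_i^2$ acted trivially, so the inverses simply undo the initial mutations by Lemma \ref{mutationlemma}(a), and the full composition stabilizes $\E$. If $(i,j)\notin R_{\E}$, then the object at position $i$ has been changed by $\mu_i^2$ to some $E_i^{\ast}\not\cong E_i$; since every $\mu_k^{-1}$ with $k\geq i+1$ operates only on positions $k$ and $k+1$, position $i$ is never touched by the remaining inverses, so the final sequence differs from $\E$ in that position.

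The main obstacle will be the ``non-identity'' half: one must verify that when the approximation is non-zero, the new object $E_i^{\ast}$ is really not isomorphic to $E_i$, so that the two sequences are genuinely distinct rather than accidentally equal. This follows from inspecting the defining triangle $E_i^{\ast}\to E_i\to Z\to E_i^{\ast}[1]$ with $Z$ a non-zero object of $\th(E_j)$, combined with Lemma \ref{atmostone}, but it is worth checking carefully since it is exactly what makes the iff sharp.
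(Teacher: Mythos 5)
Your proposal is correct and follows essentially the same route as the paper's proof: move $E_j$ leftward to sit beside $E_i$ via $\mu_{i+1}\cdots\mu_{j-1}$ (noting $E_j$ is carried along unchanged), test the pair $(E_i,E_j)$ with $\mu_i^2$, and observe that the trailing inverses never touch position $i$. The one point you flag as needing care — that $E_i^{\ast}\not\cong E_i$ when $\Ext^\bullet(E_i,E_j)\ne 0$ — can be closed most cleanly by noting that $(E_j,E_i^{\ast})$ is an exceptional pair, so $\Ext^\bullet(E_i^{\ast},E_j)=0$, whence $E_i^{\ast}\cong E_i$ would force $\Ext^\bullet(E_i,E_j)=0$; but the paper leaves this at the same level of detail you do.
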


\begin{proof} The effect of $\mu_{i+1} \dots \mu_{j-1}$ is to move $E_{j}$
to the left so it is adjacent on the right to $E_i$.  (This also modifies the 
elements it passes over.) We claim that
$\mu_i^2$ does not change $E$ if and only if $\Ext^\bullet(E_i,E_j)=0$.  In case
$\Ext^\bullet(E_i,E_j)=0$, the
remaining mutations $\mu^{-1}_{j-1}\dots 
\mu^{-1}_{i+1}$ undo the effect of the first mutations $\mu_{i+1}\dots \mu_{j-1}$, 
so the result is the identity. 

If $\Ext^\bullet(E_i,E_j) \neq 0$, then
the $i$-th element will be modified, and hence the composition $\mu^{-1}_{j-1} \dots \mu^{-1}_{i+1}(\mu_i)^2\mu_{i+1}\dots 
\mu_{j-1}$ is not in $\stab_{\E}$.   
\end{proof}

Denote by $\sigma_{\rev}$ the element of $B_{n}$ corresponding to $\mu_{\rev}$.  

From a basic lemma about group actions, we have that
$\stab_{\mu_{\rev}(E)}= \sigma_{\rev}\stab_E \sigma_{\rev}^{-1}$.  
To determine $\stab_{\mu_{\rev}}(\E)$, we need the following lemma.  (See
\cite{bri} for a different proof.)

\begin{lemma} In $B_{n}$, we have 
$\sigma_{\rev}\sigma_i\sigma_{\rev}^{-1}=\sigma_{n-i}$.
\end{lemma}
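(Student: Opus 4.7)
I would prove this identity by induction on $n$, handling two ranges of $i$ by exploiting the two expressions (\ref{order}) and (\ref{altorder}) for $\sigma_{\rev}^{(n)}$. The base case $n=2$ is immediate since $\sigma_{\rev}=\sigma_1$ and the only assertion is $\sigma_1\sigma_1\sigma_1^{-1}=\sigma_1$.

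For the inductive step, group the last block of (\ref{altorder}) to write $\sigma_{\rev}^{(n)} = \Delta_{n-1}\cdot c$, where $c=\sigma_{n-1}\sigma_{n-2}\cdots\sigma_1$ and $\Delta_{n-1}=\sigma_1(\sigma_2\sigma_1)\cdots(\sigma_{n-2}\cdots\sigma_1)$ involves only $\sigma_1,\ldots,\sigma_{n-2}$; this is precisely the corresponding $\sigma_{\rev}^{(n-1)}$ for the sub-braid group on these generators. Similarly, group the last block of (\ref{order}) to write $\sigma_{\rev}^{(n)} = \tilde\Delta_{n-1}\cdot c'$, where $c'=\sigma_1\sigma_2\cdots\sigma_{n-1}$ and $\tilde\Delta_{n-1}=\sigma_{n-1}(\sigma_{n-2}\sigma_{n-1})\cdots(\sigma_2\sigma_3\cdots\sigma_{n-1})$ involves only $\sigma_2,\ldots,\sigma_{n-1}$. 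After the reindexing $\tau_j:=\sigma_{j+1}$, $\tilde\Delta_{n-1}$ is the corresponding $\sigma_{\rev}^{(n-1)}$ on the $\tau_j$.

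For $1\le i\le n-2$ I use the second decomposition. A direct braid-relation calculation---push $\sigma_i$ leftward past the commuting letters $\sigma_{i+2},\ldots,\sigma_{n-1}$, then apply $\sigma_i\sigma_{i+1}\sigma_i=\sigma_{i+1}\sigma_i\sigma_{i+1}$---yields $c'\sigma_i=\sigma_{i+1}c'$, hence $c'\sigma_i(c')^{-1}=\sigma_{i+1}$. The induction hypothesis applied to $\tilde\Delta_{n-1}$ in the generators $\tau_j$ says it conjugates $\tau_j$ to $\tau_{n-1-j}$, i.e.\ $\sigma_{j+1}\mapsto\sigma_{n-j}$. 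Thus $\tilde\Delta_{n-1}\sigma_{i+1}\tilde\Delta_{n-1}^{-1}=\sigma_{n-i}$, and combining, $\sigma_{\rev}^{(n)}\sigma_i(\sigma_{\rev}^{(n)})^{-1}=\sigma_{n-i}$. For $i=n-1$ I use the first decomposition: since $\sigma_{n-1}$ commutes with $\sigma_1,\ldots,\sigma_{n-3}$, a single use of the braid relation $\sigma_{n-1}\sigma_{n-2}\sigma_{n-1}=\sigma_{n-2}\sigma_{n-1}\sigma_{n-2}$ gives $c\sigma_{n-1}c^{-1}=\sigma_{n-2}$, and the induction hypothesis on $\Delta_{n-1}$ then yields $\Delta_{n-1}\sigma_{n-2}\Delta_{n-1}^{-1}=\sigma_1=\sigma_{n-(n-1)}$, as required.

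The only real obstacle is the bookkeeping: verifying the two factorizations of $\sigma_{\rev}^{(n)}$ as $\Delta_{n-1}\cdot c$ and $\tilde\Delta_{n-1}\cdot c'$, and correctly restating the inductive hypothesis on $\tilde\Delta_{n-1}$ back in the original generators $\sigma_j$. Once these decompositions are in hand, each case of the induction reduces to a single commutation move together with a single braid relation, as above.
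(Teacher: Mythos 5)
Your proof is correct, and it takes a genuinely different route from the paper's. The paper uses the standard reduced-word lifting $w\mapsto\sigma_w$ from $S_n$ to $B_n$ (well-defined by Matsumoto's theorem), notes that $\sigma_{\rev}=\sigma_{w_0}$, and exploits the length-additive factorization $w_0 = s_{n-i}\cdot(w_0 s_i)$ to get $\sigma_{\rev}=\sigma_{n-i}\,\sigma_{w_0 s_i}$ and $\sigma_{w_0 s_i}\sigma_i=\sigma_{\rev}$; the conjugation identity then falls out in one line with no induction. You instead argue by induction on $n$, peeling off a Coxeter-like factor $c$ or $c'$ from the two factorizations (\ref{order}) and (\ref{altorder}) of $\sigma_{\rev}^{(n)}$, proving the basic conjugation identities $c'\sigma_i (c')^{-1}=\sigma_{i+1}$ ($i\le n-2$) and $c\sigma_{n-1}c^{-1}=\sigma_{n-2}$ by direct commutation and a single braid move, and then feeding the result into the inductive hypothesis on the remaining factor $\Delta_{n-1}$ or $\tilde\Delta_{n-1}$. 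Your argument is more elementary in that it uses only explicit braid manipulations rather than the lifting map, but it relies more heavily on the equality of the two presentations (\ref{order}) and (\ref{altorder}) of $\sigma_{\rev}$ (stated just before the lemma) and requires the additional bookkeeping of reindexing generators to apply the inductive hypothesis. Both arguments ultimately trace back to the same underlying fact that (\ref{order}) is a reduced expression for the longest element $w_0$; the paper makes that dependence explicit and short, while you hide it inside the structure of the induction. One small point worth noting in your write-up: all verifications (the two factorizations, the identities for $c$ and $c'$, and the transfer of the inductive hypothesis through the reindexing $\tau_j=\sigma_{j+1}$) check out as stated.
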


\begin{proof} Let $S_{n}$ be the symmetric group generated by the
  simple reflections $s_1, \dots, s_{n-1}$ and let $w_0$ be the longest element in 
$S_{n}$.  This is the 
permutation which takes $i$ to $n+1-i$ for all $i$.  For any $i$, we
can write $w_0= (w_0s_iw_0^{-1})(w_0s_i)$.  Note that
$w_0s_iw_0^{-1}= s_{n-i}$.  

For any $w\in S_{n}$, write $\sigma_w$ for the element of the braid group
$B_{n}$ obtained by taking any reduced word for $w$ and replacing each
occurrence of $s_i$ by $\sigma_i$ for all $i$.  This produces a
well-defined element of $B_{n}$ because any two reduced words for $w$ 
are related by braid relations, which also hold in $B_{n}$.  

Fix $i$, and write $u=w_0s_i$.  
We now have that $\sigma_{\rev}=\sigma_{w_0}=\sigma_{n-i}\sigma_u$.
So $\sigma_{\rev}\sigma_i\sigma_{\rev}^{-1}=\sigma_{n-i}\sigma_u\sigma_i\sigma_{\rev}^{-1}=\sigma_{n-i}\sigma_{\rev}\sigma_{\rev}^{-1}=\sigma_{n-i}$.  
\end{proof}

It follows that $(n-j, n-i) \in R_{\mu_{\rev}(\E)}$ if and only if $(i,j)\in R_{\E}$.
Hence we have proved the following, which is part (b) of our main theorem.

\begin{theorem}\label{partb}
The operation $\mu_{\rev}$ induces a bijection between
silting objects and $\Hom_{\leq 0}$-configurations.
\end{theorem}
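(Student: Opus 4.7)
The plan is to promote the bijection of Theorem \ref{parta} from exceptional sequences to equivalence classes, where two exceptional sequences are declared equivalent if they have the same multiset of indecomposable summands. By Lemma \ref{l:silt_exc}, silting objects correspond exactly to such equivalence classes on the silting side, and by Lemma \ref{hfour} the analogous statement holds on the $\Homneg$-configuration side. So it suffices to show that $\mu_{\rev}$ sends reorderings of any one silting exceptional sequence to reorderings of a common $\Homneg$-configuration exceptional sequence, and conversely.

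My strategy works entirely at the level of the braid group $B_n$ acting on exceptional sequences via $\sigma_i \mapsto \mu_i$. For each exceptional sequence $\E$, the set $R_\E$ records which pairs of entries can be brought adjacent (in either order) by a reordering to another exceptional sequence. The lemma preceding the theorem expresses membership in $R_\E$ as the condition that an explicit braid word $\mu^{-1}_{j-1}\cdots\mu^{-1}_{i+1}\mu_i^2\mu_{i+1}\cdots\mu_{j-1}$ lies in $\stab_\E$. Combining the standard group-action identity
\[ \stab_{\mu_{\rev}(\E)} = \sigma_{\rev}\,\stab_\E\,\sigma_{\rev}^{-1} \]
with the braid identity $\sigma_{\rev}\sigma_i\sigma_{\rev}^{-1}=\sigma_{n-i}$ from the previous lemma then forces the explicit symmetry displayed just before the theorem: $R_{\mu_{\rev}(\E)}$ is obtained from $R_\E$ by the index-reversal involution.

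Once this symmetry is in hand, the conclusion is transparent. Any reordering of $\E$ into another exceptional sequence on the same multiset is realized by an element of the subgroup of $B_n$ generated by those $\sigma_i$ with $(i,i+1)\in R_\E$, because each such $\mu_i$ merely swaps two adjacent summands without modifying any of them (the first case in the proof of Lemma \ref{mutationlemma}(b)). The symmetry identifies this subgroup with the corresponding subgroup for $\mu_{\rev}(\E)$ under $\sigma \mapsto \sigma_{\rev}\sigma\sigma_{\rev}^{-1}$, so it matches the orbit of $\E$ with the orbit of $\mu_{\rev}(\E)$. Hence $\mu_{\rev}$ descends to a well-defined map from silting objects to $\Homneg$-configurations, and applying the same reasoning to $\mu_{\rev}^{-1}$ (which exists by Theorem \ref{parta}) furnishes its inverse.

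I expect the main obstacle to be the bookkeeping step of verifying that the subgroup of pure transpositions really does act transitively on the set of exceptional orderings of a given multiset of summands. This amounts to checking that any sequence of adjacent swaps required to pass from one exceptional ordering to another is always licensed by $R_\E$, a combinatorial statement that follows from unwinding the definition of $R_\E$ and using that at each intermediate stage the pair being swapped remains $\Ext^{\bullet}$-orthogonal. No essentially new ideas beyond the two lemmas on $R_\E$ and on $\sigma_{\rev}$ should be needed.
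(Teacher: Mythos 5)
Your overall strategy matches the paper's: both pass to the braid group, introduce $R_\E$, invoke the stabilizer characterization of $R_\E$, use the identity $\sigma_{\rev}\sigma_i\sigma_{\rev}^{-1}=\sigma_{n-i}$ together with $\stab_{\mu_{\rev}(\E)}=\sigma_{\rev}\stab_\E\sigma_{\rev}^{-1}$, and conclude that $\mu_{\rev}$ transports the data controlling which reorderings are exceptional. So this is the same route, not a genuinely different one.

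However, there is a real slip in the final step. You claim that every exceptional reordering of $\E$ lies in the orbit of $\E$ under the \emph{fixed} subgroup $G_\E = \langle\sigma_i : (i,i+1)\in R_\E\rangle$. This is false in general, because the set of available adjacent transpositions changes after each swap. For a concrete counterexample, take $n=3$ with $R_\E=\{(1,2),(1,3)\}$ (so $\Ext^\bullet(E_2,E_3)\ne 0$). Then $(E_2,E_3,E_1)$ is an exceptional reordering of $\E=(E_1,E_2,E_3)$, reached as $\mu_2\mu_1(\E)$; but $(2,3)\notin R_\E$, so $G_\E=\langle\sigma_1\rangle$ and the $G_\E$-orbit of $\E$ is only $\{\E,\mu_1\E\}$, missing $(E_2,E_3,E_1)$. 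Consequently "the symmetry identifies this subgroup with the corresponding subgroup" does not by itself give that the orbit of reorderings is matched.

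The fix is to drop the fixed-subgroup formulation and argue by induction along a chain of single transposition mutations. Any two exceptional reorderings of $\E$ are connected by such a chain (the standard fact that linear extensions of a finite poset are connected by adjacent transpositions of incomparable consecutive elements — the "bookkeeping step" you correctly flag as needing attention, but whose resolution is via this poset fact rather than a subgroup orbit). At each step, if $(i,i+1)\in R_{\E}$ then $\mu_{\rev}(\mu_i\E)=\sigma_{n-i}\mu_{\rev}(\E)$, and $\sigma_{\rev}\mu_i^2\sigma_{\rev}^{-1}=\mu_{n-i}^2\in\stab_{\mu_{\rev}(\E)}$ shows $(n-i,n-i+1)\in R_{\mu_{\rev}(\E)}$, so the conjugated step is again a pure transposition. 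Iterating along the chain gives exactly the conclusion the paper draws from the reversal of $R$, and applying the same reasoning to $\mu_{\rev}^{-1}$ completes the bijection on equivalence classes. With this repair your argument is correct and coincides with the paper's.
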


\section{Specializing to $m$-cluster tilting objects and $m$-$\Hom_{\leq 0}$-configurations} \label{s:cluster_config}

In this section we prove part (c) of our main theorem.
We need to recall the following notions.
A full subcategory $\T$ of $\D$ is called {\em suspended} if
it satisfies the following:

\begin{itemize}
\item[(S1)] If $A \to B \to C \to A[1]$ is a triangle in $\D$ and $A,C$ are in $\T$, then $B$ is in $\T$.
\item[(S2)] If $A$ is in $\T$, then $A[1]$ is in $\T$.
\end{itemize} 

A suspended subcategory $\U$ is called a {\em torsion class} in \cite{br}
(or {\em aisle} in \cite{kv}) 
if the inclusion functor $\U \to \D$ has a right
adjoint. 
For a subcategory $\U$ of $\D$, we let ${\U}^{\perp} = \{X \in \D \mid \Hom(\U,X) = 0 \} $.
For a torsion class $\T$, let $\F = {\T}^{\perp}$ be the corresponding {\em torsion-free} class.
Recall that a torsion class in $\D$ is called {\em splitting} if
every indecomposable object in $\D$ is either torsion or torsion-free;
in other words, any indecomposable object which is not in the torsion class,
does not admit any morphisms from any object of the torsion class.  

We prove the following easy lemmas:

\begin{lemma} If $\E$ is an exceptional sequence contained in a splitting
torsion-free class $\pf$, then $\mu_{\rev}(\E)$ is also contained in 
$\pf$.
\end{lemma}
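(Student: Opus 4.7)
The plan is to reduce to the basic case: since $\mu_{\rev}$ is by definition a composition of the elementary mutations $\mu_i$, it will suffice to show that applying a single $\mu_i$ to an exceptional sequence all of whose entries lie in $\pf$ produces another exceptional sequence all of whose entries lie in $\pf$. The global claim then follows by iterating over the mutations making up $\mu_{\rev}$.

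So I would start from an exceptional sequence $\E = (E_1, \dots, E_n)$ with each $E_j \in \pf$, apply $\mu_i$ to replace $(E_i, E_{i+1})$ by $(E_{i+1}, E_i^{\ast})$, and focus on proving $E_i^{\ast} \in \pf$, since all other entries are unchanged. The relevant data is the defining triangle
$$E_i^{\ast} \to E_i \to E_{i+1}^r[p] \to E_i^{\ast}[1],$$
where the third term is concentrated in a single degree by Lemma \ref{atmostone}. If $r = 0$, then $E_i^{\ast} \cong E_i \in \pf$ and there is nothing to prove, so I may assume $r \geq 1$.

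The key step exploits the splitting hypothesis: since $E_i^{\ast}$ is indecomposable (being part of an exceptional sequence), it must lie either in $\T$ or in $\pf$, and I would rule out the first option by contradiction. Suppose $E_i^{\ast} \in \T$. Then $\Hom(E_i^{\ast}, E_i) = 0$ because $E_i \in \pf = \T^{\perp}$, so the morphism $E_i^{\ast} \to E_i$ in the triangle vanishes. Rotating the triangle to $E_i \to E_{i+1}^r[p] \to E_i^{\ast}[1] \to E_i[1]$, the new connecting morphism is the shift of $E_i^{\ast} \to E_i$, hence also zero, so the triangle splits and $E_{i+1}^r[p] \cong E_i \oplus E_i^{\ast}[1]$. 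By Krull--Schmidt, $E_i$ is then isomorphic to one of the summands $E_{i+1}[p]$; but this forces $\overline{E_i} = \overline{E_{i+1}}$, contradicting the fact that $(\overline{E_i}, \overline{E_{i+1}})$ is an exceptional pair in $\module H$.

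The main delicate point is the splitting step for the rotated triangle — making sure that the vanishing of $E_i^{\ast} \to E_i$ really does force the direct sum decomposition, and that Krull--Schmidt lets us extract $E_i$ as a summand. Once this is in hand the contradiction is immediate, so $E_i^{\ast} \in \pf$, completing the inductive step and hence the proof.
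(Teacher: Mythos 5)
Your proof is correct, and it takes a genuinely different route from the one the paper intends. The paper's one-line argument invokes the fact (implicit in the proof of Lemma~\ref{mutationlemma}(c)) that each term of $\mu_{\rev}(\E)$ is linked to a term of $\E$ by a chain of nonzero morphisms, and then pushes membership in $\pf$ backward along each chain using the splitting hypothesis. You instead localize the statement to a single right mutation $\mu_i$: assuming $E_i^{\ast}\in\T$ forces $\Hom(E_i^{\ast},E_i)=0$ because $E_i\in\pf=\T^{\perp}$, whence the approximation triangle splits, $E_{i+1}^r[p]\cong E_i\oplus E_i^{\ast}[1]$, and Krull--Schmidt produces the contradiction $E_i\cong E_{i+1}[p]$. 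What your argument buys is a cleaner, more self-contained proof that does not rely on the octahedral machinery behind Lemma~\ref{mutationlemma}(c), and it even proves the slightly stronger statement that every single right mutation preserves membership in a splitting torsion-free class. It is worth noting that this stronger statement is genuinely asymmetric: the same argument does not show that a single $\mu_i$ preserves a splitting torsion class $\T$, because the morphism in the triangle runs $E_i^{\ast}\to E_i$, from the candidate to the known member, and $\Hom(\T,\pf)=0$ gives no information in that direction; this is consistent with the companion lemma in the paper, which asserts that $\mu_{\rev}$ carries a splitting torsion class $\T$ into $\nu^{-1}(\T)$ rather than into $\T$ itself. The paper's chain-of-morphisms framing is chosen precisely so that both the $\pf$ and $\T$ cases can be handled by a uniform argument, while your approach is sharper for this particular lemma but does not transfer to the torsion-class one.
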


\begin{proof} This follows from the fact that each object in 
$\mu_\rev(\E)$ has a sequence of non-zero morphisms to an object in $\E$.
\end{proof}

\begin{lemma} If $\E$ is an exceptional sequence contained in a splitting 
torsion class $\pt$, then $\mu_{\rev}(\E)$ is contained in 
$\nu^{-1}(\pt)$.  
\end{lemma}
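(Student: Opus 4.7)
The plan is to establish a dual statement — that if $\E'\subseteq \T$ for a splitting torsion class $\T$, then $\mu_\rev^{-1}(\E')\subseteq\T$ as well — and then deduce the lemma from Proposition \ref{p:double_mu}. The starting observation for the dual statement is that each left mutation $\mu_i^{-1}$ produces a new object $E_{i+1}^{!}$ fitting in a triangle $Z\to E_{i+1}\to E_{i+1}^{!}\to Z[1]$ with $Z\in\th(E_i)$. Applying $\Hom(E_{i+1},-)$ to this triangle and using $\Hom(E_{i+1},E_i[k])=0$ for all $k$ (Lemma \ref{atmostone}), one obtains an isomorphism $\Hom(E_{i+1},E_{i+1})\simeq\Hom(E_{i+1},E_{i+1}^{!})$ sending the identity to the triangle morphism $E_{i+1}\to E_{i+1}^{!}$, so this morphism is nonzero.

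Next I would argue by induction on the number of left mutations appearing in $\mu_\rev^{-1}$. Assume inductively that the terms of the current exceptional sequence all lie in $\T$. The next mutation introduces an object $E_{i+1}^{!}$ admitting a nonzero morphism from $E_{i+1}\in\T$. By the splitting hypothesis, any indecomposable not in $\T$ lies in $\F=\T^\perp$ and therefore admits no nonzero morphism from $\T$; hence $E_{i+1}^{!}\in\T$. The other terms are unchanged, so the sequence remains in $\T$, closing the induction and proving the dual statement.

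To conclude, I invoke Proposition \ref{p:double_mu}, which gives $\mu_\rev\circ\mu_\rev=\nu^{-1}$ as operations on exceptional sequences, so $\mu_\rev(\E)=\mu_\rev^{-1}(\nu^{-1}(\E))$. Since $\nu^{-1}$ is an autoequivalence of $\D$, the image $\nu^{-1}(\T)$ is again a splitting torsion class (via a short Serre-duality calculation), and it contains $\nu^{-1}(\E)$. Applying the dual statement to $\nu^{-1}(\T)$ yields $\mu_\rev^{-1}(\nu^{-1}(\E))\subseteq\nu^{-1}(\T)$, which is the desired conclusion.

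The hard part will be making the induction precise: after each individual mutation one must confirm that the entire current sequence — not only the newly introduced term — still sits in $\T$, so that the splitting argument can be reapplied at the next step. The $\Hom$-calculation above isolates the nonvanishing of the relevant morphism into one triangle per mutation, which is what allows the induction to go through cleanly without needing to worry about chains of composed morphisms collapsing.
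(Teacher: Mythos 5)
Your proof is correct and, despite the framing in terms of a ``dual statement,'' it is in essence the same argument as the paper's. The paper notes that applying $\mu_{\rev}$ to $\mu_{\rev}(\E)$ yields $\nu^{-1}(\E)$, so that each element of $\mu_{\rev}(\E)$ receives a chain of nonzero morphisms from an element of $\nu^{-1}(\E)\subseteq\nu^{-1}(\pt)$, and then invokes the splitting property of $\nu^{-1}(\pt)$; your rewriting $\mu_{\rev}(\E)=\mu_{\rev}^{-1}(\nu^{-1}(\E))$ and tracking the left mutations $\mu_i^{-1}$ one at a time is the same chain of morphisms read in the opposite direction, since $\mu_i$ and $\mu_i^{-1}$ are mutually inverse. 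What you add is worthwhile: you make explicit the step the paper leaves implicit, namely that the triangle morphism $E_{i+1}\to E_{i+1}^{!}$ is nonzero (via the $\Hom(E_{i+1},-)$ long exact sequence and Lemma \ref{atmostone}), and you spell out the induction ensuring that after each single mutation the whole current sequence still sits in $\T$, which is exactly the point that makes the splitting argument reusable at the next step. One small remark: no Serre-duality calculation is actually needed to see that $\nu^{-1}(\T)$ is again a splitting torsion class; this holds simply because $\nu^{-1}$ is a triangle autoequivalence of $\D$, which preserves suspended subcategories, adjoints, indecomposability, and $\Hom$-vanishing.
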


\begin{proof} This follows from the fact that, applying $\mu_{\rev}$ to
$\mu_{\rev}(\E)$, we obtain $\nu^{-1}(\E)$ by Proposition \ref{p:double_mu}, 
which implies that there is a
sequence of non-zero morphisms to
every element in
$\mu_{\rev}(\E)$ from an element in $\nu^{-1}(\E)$. 
\end{proof}

By combining the above lemmas, we obtain that $\mu_{\rev}$, 
applied to an exceptional sequence in 
$\dmoneplus$, yields a sequence with elements in $\dmzero$. Recall that an 
$m$-$\Hom_{\leq}$-configuration is a $\Hom_{\leq}$-configuration contained in $\dmzero$,
and that an $m$-cluster tilting object is a silting object contained in $\dmoneplus$. 
Hence, in particular we have the following.

\begin{proposition}\label{movingleft}
Let $\E$ be an exceptional sequence which is an $m$-cluster tilting object. 
Then $\mu_{\rev}(\E)$ is an $m$-$\Hom_{\leq 0}$-configuration.
\end{proposition}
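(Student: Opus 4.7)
The plan is to deduce the proposition from the two immediately preceding lemmas together with Theorem \ref{parta}. Theorem \ref{parta} already ensures that $\mu_\rev(\E)$ is a $\Homneg$-configuration exceptional sequence, because $\E$ being an $m$-cluster tilting object is in particular silting. What remains is to verify that every indecomposable in $\mu_\rev(\E)$ lies in $\dmzero$.

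The strategy is to squeeze $\mu_\rev(\E)$ between a splitting torsion class and a splitting torsion-free class, each containing $\dmoneplus$. For the upper bound I would apply the first lemma with the splitting torsion-free class $\pf := \D^{\leq m}$ (its complementary splitting torsion class is $\D^{\geq m+1}$, and clearly $\dmoneplus \subseteq \D^{\leq m}$); this immediately gives $\mu_\rev(\E) \subseteq \D^{\leq m}$.

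The main obstacle is the lower bound. The naive choice $\pt := \D^{\geq 0}$ in the second lemma does not suffice, because $\nu^{-1}$ sends a non-injective module at degree $0$ to a non-projective module at degree $-1$, so $\nu^{-1}(\D^{\geq 0})$ is strictly larger than $\D^{\geq 0}$. My proposed fix is to take $\pt := \nu(\D^{\geq 0})$ instead. Since $\nu$ is a triangulated autoequivalence of $\D$, it transports the splitting torsion pair $(\D^{\geq 0}, \D^{\leq -1})$ to a new splitting torsion pair $(\nu(\D^{\geq 0}), \nu(\D^{\leq -1}))$; the identity $\nu(\D^{\geq 0})^{\perp} = \nu(\D^{\leq -1})$ is an immediate consequence of the adjunction $\Hom(\nu Y, X) \cong \Hom(Y, \nu^{-1} X)$. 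I would then check $\dmoneplus \subseteq \nu(\D^{\geq 0})$, equivalently $\nu^{-1}(\dmoneplus) \subseteq \D^{\geq 0}$, by a short case analysis on an indecomposable summand $X$ of $\dmoneplus$: an injective at degree $0$ is sent by $\nu^{-1}$ to the corresponding projective still at degree $0$; an $M[i]$ with $1 \leq i \leq m$ and $M$ injective is sent to $P_M[i]$ at degree $i \geq 0$; and an $M[i]$ with $1 \leq i \leq m$ and $M$ non-injective is sent to $(\tau^{-1}M)[i-1]$ at degree $i-1 \geq 0$.

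With this choice of $\pt$, the second lemma will yield $\mu_\rev(\E) \subseteq \nu^{-1}(\nu(\D^{\geq 0})) = \D^{\geq 0}$. Intersecting with the upper bound from the first lemma gives $\mu_\rev(\E) \subseteq \D^{\geq 0} \cap \D^{\leq m} = \dmzero$, and combined with the $\Homneg$-configuration property from Theorem \ref{parta} this shows $\mu_\rev(\E)$ is an $m$-$\Homneg$-configuration, as required.
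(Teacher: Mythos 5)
Your proof is correct and follows the same route the paper intends, which is to combine the two lemmas about splitting torsion(-free) classes; the paper's text is extremely terse here (it merely says ``by combining the above lemmas'') and does not spell out which torsion pairs to use, so you have done genuine work in filling that in. Your choice of $\pf=\D^{\leq m}$ for the upper bound and $\pt=\nu(\D^{\geq 0})$ for the lower bound is exactly right, and you have correctly diagnosed why the naive choice $\pt=\D^{\geq 0}$ fails (the non-injectives at degree $0$ get pushed to degree $-1$ by $\nu^{-1}$). Note that $\nu(\D^{\geq 0})$ is in fact precisely the aisle whose indecomposables are the injectives at degree $0$ together with everything at degree $\geq 1$, so it is the smallest torsion class containing $\dmoneplus$; this makes it the natural choice.
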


We aim to show that the converse also holds.
For this we need the following lemmas.

\begin{lemma}\label{splittor} 
Let $\E$ be an exceptional sequence contained in a 
torsion class $\pt$.  Then if $\mu_i$ is non-negative,
$\mu_i(\E)$ is also contained in $\pt$.  
\end{lemma}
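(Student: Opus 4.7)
The plan is to exploit directly the axioms (S1) and (S2) of a suspended subcategory applied to the approximation triangle defining the mutation. Recall that $\mu_i$ replaces the pair $(E_i,E_{i+1})$ by $(E_{i+1}, E_i^{\ast})$, where by definition of non-negativity we have a triangle of the form
$$E_{i+1}^r[j] \to E_i^{\ast} \to E_i \to E_{i+1}^r[j+1]$$
with $j \geq 0$. All entries of $\mu_i(\E)$ other than $E_i^{\ast}$ already lie in $\pt$ by hypothesis on $\E$, so the only thing to check is that $E_i^{\ast} \in \pt$.

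First I would observe that $E_{i+1} \in \pt$ implies $E_{i+1}^r[j] \in \pt$ for every $j \geq 0$, by iterated application of (S2) (together with the fact that $\pt$, being a torsion class, is closed under finite direct sums). This is where the non-negativity hypothesis is essential: a negative shift $E_{i+1}[j]$ with $j < 0$ would not be guaranteed to lie in $\pt$, since suspended subcategories are only assumed to be closed under positive shifts.

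Then I would apply (S1) to the approximation triangle, read as $A \to B \to C \to A[1]$ with $A = E_{i+1}^r[j]$, $B = E_i^{\ast}$, and $C = E_i$. Since $A \in \pt$ by the previous paragraph and $C \in \pt$ by hypothesis, (S1) yields $B = E_i^{\ast} \in \pt$, as required.

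There is no serious obstacle here; the lemma is essentially an unpacking of the definitions, with the crux being the observation that non-negativity of $\mu_i$ is exactly what is needed to place the third term of the defining triangle inside $\pt$ using only the ``positive'' closure property (S2). The real content of this technical statement will presumably be used in the subsequent argument, where one iterates such moves (via $\mu_{\rev}$) and needs to keep track of inclusion in $\dmzero$ or related suspended subcategories.
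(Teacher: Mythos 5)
Your proof is correct and matches the paper's argument: both apply axiom (S2) (together with closure under finite direct sums, which follows from (S1) applied to split triangles) to place the non-negatively shifted approximation term in $\pt$, and then apply (S1) to the rotated approximation triangle to conclude $E_i^{\ast} \in \pt$. You have spelled out the details slightly more fully than the paper's one-line proof; note also that the paper's proof writes $E_{i-1}$ where the definition of a non-negative mutation uses $E_{i+1}$ — your indexing follows the definition correctly.
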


\begin{proof} When we apply a non-negative $\mu_i$, we have
the approximation sequence:
$$ E_{i-1}^r[j]\rightarrow E_i^{\ast} \rightarrow E_i \rightarrow E_{i-1}^r[j+1]$$
with $j\geq 0$.  The left and right terms of this triangle are in $\pt$, so
the middle term is also.
\end{proof}

\begin{corollary}\label{cor1} If a $\Hom_{\leq 0}$-configuration exceptional
sequence $\E$ is contained in 
a torsion class $\pt$, so is $\mu_{\rev}(\E)$.  
\end{corollary}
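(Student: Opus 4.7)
The plan is to deduce this corollary directly by chaining together Lemma \ref{othermut} and Lemma \ref{splittor}. The idea is that $\mu_{\rev}$ is literally a product of individual mutations $\mu_i$, and each of those, when applied in the prescribed order starting from a $\Hom_{\leq 0}$-configuration exceptional sequence, turns out to be non-negative; meanwhile, non-negative mutations preserve containment in any torsion class.

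More explicitly, I would write $\mu_{\rev}$ as in equation (\ref{order}),
\[
\mu_{\rev}=\mu_{n-1}(\mu_{n-2}\mu_{n-1})\dots(\mu_1\dots \mu_{n-1}),
\]
so that $\mu_{\rev}(\E)$ is obtained from $\E$ by a specific finite sequence of single mutations $\mu_{i_1}, \mu_{i_2}, \dots, \mu_{i_N}$. By Lemma \ref{othermut}, applied to our starting $\Hom_{\leq 0}$-configuration exceptional sequence $\E$, each of these mutations is non-negative when it is performed (that is, the left-approximation triangle at that step has the form $E_{i-1}^r[j]\to E_i^{\ast}\to E_i\to E_{i-1}^r[j+1]$ with $j\geq 0$).

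Now I would proceed by induction on the step index $k$, with the inductive claim that the partial composition $\mu_{i_k}\cdots\mu_{i_1}(\E)$ lies in $\pt$. The base case $k=0$ is the hypothesis on $\E$. For the induction step, the $(k+1)$-st mutation $\mu_{i_{k+1}}$ is non-negative by Lemma \ref{othermut}, and it is applied to an exceptional sequence that lies in $\pt$ by the inductive hypothesis; Lemma \ref{splittor} then yields that the result is still in $\pt$. After $N$ steps we conclude that $\mu_{\rev}(\E)$ lies in $\pt$.

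There is no real obstacle here; the only thing to be careful about is that Lemma \ref{othermut} characterizes the non-negativity of the mutations globally, along the whole $\mu_{\rev}$ process, rather than just at the initial step, which is precisely what makes the inductive step above go through without needing to re-verify the $\Hom_{\leq 0}$-configuration property of the intermediate sequences. Note also that Lemma \ref{splittor} does not require the torsion class to be splitting, so the corollary follows for arbitrary torsion classes $\pt$.
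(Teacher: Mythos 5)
Your proof is correct and is essentially the paper's own argument, just spelled out with an explicit induction: the paper likewise cites Lemma \ref{othermut} to see that every mutation in $\mu_{\rev}$ is non-negative and then applies Lemma \ref{splittor} step by step.
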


\begin{proof} By Lemma \ref{othermut}, we can calculate $\mu_{\rev}(\E)$
using only non-negative mutations.  The claim now follows from Lemma \ref{splittor}.
\end{proof}

\begin{lemma} \label{splittf} If $\E$ is an exceptional sequence contained in a 
torsion-free class $\pf$, and $\mu_i^{-1}$ is a negative
mutation, then $\mu_i^{-1}(\E)$ is also contained in $\pf$.  
\end{lemma}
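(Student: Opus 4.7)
The plan is to mimic the proof of Lemma~\ref{splittor}, working on the ``other side'' of the exchange triangle, and exploit the natural duality between properties of $\pt$ and those of $\pf$.

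First, I would extract the two closure properties of $\pf$ that I need. Since $\pt$ satisfies (S1) and (S2), applying $\Hom(T,-)$ (for $T\in\pt$) to a triangle $A\to B\to C\to A[1]$ with $A,C\in\pf$ gives a long exact sequence whose outer terms vanish, so $B\in\pf$. Hence $\pf$ is closed under extensions in the triangulated sense. Similarly, for $A\in\pf$ and $T\in\pt$ we have $\Hom(T,A[-1])=\Hom(T[1],A)=0$ using (S2), so $\pf$ is closed under the inverse shift $[-1]$.

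Next, I would write down the approximation triangle for the negative mutation $\mu_i^{-1}$ acting on the pair $(E_i,E_{i+1})$. By definition of ``negative'' (and by Lemma~\ref{atmostone}, which confines the approximation to a single degree), this triangle has the form
$$ E_i^{\,r}[j]\to E_{i+1}\to E_{i+1}^{!}\to E_i^{\,r}[j+1]$$
with $j<0$, and hence $j+1\leq 0$. Rotating, we get
$$ E_{i+1}\to E_{i+1}^{!}\to E_i^{\,r}[j+1]\to E_{i+1}[1].$$
Since $E_i\in\pf$, iterated use of the closure under $[-1]$ gives $E_i[j+1]\in\pf$, and since $\pf$ is defined by a $\Hom$-vanishing condition it is closed under finite direct sums, so $E_i^{\,r}[j+1]\in\pf$. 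Together with $E_{i+1}\in\pf$, the extension-closure of $\pf$ forces $E_{i+1}^{!}\in\pf$. As the remaining entries of $\E$ are unchanged by $\mu_i^{-1}$, we conclude that $\mu_i^{-1}(\E)$ is contained in $\pf$.

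There is no real obstacle here: the argument is essentially dual to that of Lemma~\ref{splittor}, with the role of the torsion class $\pt$ (which is closed under $[+1]$ and extensions) replaced by the torsion-free class $\pf$ (closed under $[-1]$ and extensions). The only thing worth flagging is that one must verify the shift index: negativity of $\mu_i^{-1}$ provides $j+1\leq 0$, which is exactly what is needed so that the shift-closure of $\pf$ in the negative direction applies.
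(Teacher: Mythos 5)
Your proposal is correct and follows essentially the same line as the paper's own proof: write down the exchange triangle for $\mu_i^{-1}$, rotate it so that the new object $E_{i+1}^{!}$ sits in the middle with terms of $\pf$ on either side, and invoke the closure of $\pf$ under extensions and under nonpositive shifts. The paper's version is much terser (it simply states the rotated triangle and says the middle term must lie in $\pf$), whereas you spell out the needed closure properties of $\pf$ and explicitly track that the negativity $j<0$ yields $j+1\le 0$, which is exactly where shift-closure is used; this is a helpful amplification but not a different argument.
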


\begin{proof} The proof is similar to that of Lemma \ref{splittor}.
The approximation is of the form
$$E_{i-1}\rightarrow E_{i-1}^{\ast} \rightarrow E_{i}^r[j]
\rightarrow E_{i-1}[1]$$
and $j\leq 0$.  Again, the left and right terms of the triangle are in $\pf$, 
hence the middle is also.
\end{proof}

\begin{corollary} \label{cor2} If $\E$ is a $\Hom_{\leq 0}$-configuration 
exceptional sequence which is contained in a torsion-free
class $\pf$, then $\mu_{\rev}^{-1}(\E)$ is contained in $\pf$.
\end{corollary}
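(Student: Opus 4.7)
The plan is to mirror the proof of Corollary \ref{cor1}, with Lemma \ref{othermut} replaced by Lemma \ref{goodmut} and Lemma \ref{splittor} replaced by Lemma \ref{splittf}. The key insight is that while Corollary \ref{cor1} exploited the fact that $\mu_{\rev}$ applied to a $\Homneg$-configuration exceptional sequence proceeds through non-negative mutations, here I want the dual statement that $\mu_{\rev}^{-1}$ applied to a $\Homneg$-configuration exceptional sequence proceeds through negative inverse mutations. Pulling the problem back to the silting side via the bijection of Theorem \ref{partb} is a clean way to obtain negativity at every intermediate step.

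Concretely, I would first invoke Theorem \ref{partb} to obtain the unique silting exceptional sequence $\Y$ such that $\mu_{\rev}(\Y) = \E$; the corollary then reduces to showing $\Y \in \pf$. Fix an expression $\mu_{\rev} = \mu_{i_1}\mu_{i_2}\cdots\mu_{i_N}$, for instance the one in (\ref{order}), and define $\Y_0 = \Y$ and $\Y_k = \mu_{i_{N-k+1}}(\Y_{k-1})$, so that $\Y_N = \E$. The partial transforms produced while applying $\mu_{\rev}^{-1} = \mu_{i_N}^{-1}\mu_{i_{N-1}}^{-1}\cdots\mu_{i_1}^{-1}$ to $\E$ are then exactly $\Y_{N-1},\Y_{N-2},\ldots,\Y_0$. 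By Lemma \ref{goodmut}, each forward mutation $\mu_{i_{N-k+1}}$ used in the passage from $\Y$ to $\E$ is negative; hence, by the observation recorded right after the definition of negativity, the inverse $\mu_{i_{N-k+1}}^{-1}$ applied to $\Y_k$ is itself negative. A downward induction on $k$ now closes the argument: the base case $\Y_N = \E \in \pf$ is the hypothesis, and the inductive step is exactly Lemma \ref{splittf}. Specializing to $k = 0$ gives $\mu_{\rev}^{-1}(\E) = \Y \in \pf$.

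There is no substantial obstacle; the only step requiring care is to match each inverse mutation applied while computing $\mu_{\rev}^{-1}(\E)$ with the corresponding forward mutation from the trajectory of $\Y$ under $\mu_{\rev}$, so that negativity inherited from Lemma \ref{goodmut} legitimately triggers Lemma \ref{splittf} at every stage. This pairing is immediate from the definitions of $\mu_{\rev}$ and $\mu_{\rev}^{-1}$ as word inverses, so the argument is essentially bookkeeping.
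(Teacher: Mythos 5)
Your argument is correct and matches the paper's, which compresses the same reasoning into a one-line appeal to Lemma \ref{goodmut}: since $\mu_{\rev}^{-1}(\E)$ is a silting exceptional sequence, the forward mutations taking it to $\E$ are all negative, so the inverse mutations computing $\mu_{\rev}^{-1}(\E)$ are all negative and Lemma \ref{splittf} applies stepwise. One small slip: to know that $\mu_{\rev}^{-1}(\E)$ is a silting exceptional sequence you should cite Theorem \ref{parta} (the bijection at the level of exceptional sequences) rather than Theorem \ref{partb} (which is at the level of objects).
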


\begin{proof} We know that $\mu_{\rev}^{-1}$ can be expressed as a 
product of negative mutations by Lemma \ref{goodmut}.
\end{proof}

\begin{proposition}\label{movingright} 
Let $\E$ be an
$m$-$\Hom_{\leq}$-configuration exceptional sequence. Then $\mu_{\rev}^{-1}(\E)$ is
an $m$-cluster tilting object.
\end{proposition}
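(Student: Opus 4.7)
By Theorem \ref{parta}, $\mu_{\rev}^{-1}(\E)$ is already known to be a silting exceptional sequence, so what remains is to show that its summands lie in $\dmoneplus$.

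The plan is to mirror the proof of Proposition \ref{movingleft}, applying Corollaries \ref{cor1} and \ref{cor2} to two natural classes: the torsion class $\pt = \dzero$ and the torsion-free class $\pf = \dm$. (The former is a torsion class because the inclusion $\dzero \hookrightarrow \D$ admits the truncation functor as a right adjoint; the latter is a torsion-free class because $\dm = (\D^{\geq m+1})^{\perp}$.) Since $\E$ is an $m$-$\Hom_{\leq 0}$-configuration, $\E \subset \dmzero = \pt \cap \pf$, so both corollaries apply.

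Corollary \ref{cor2} gives directly $\mu_{\rev}^{-1}(\E) \subset \dm$. For the other containment, Corollary \ref{cor1} gives $\mu_{\rev}(\E) \subset \dzero$; combining this with the identity $\mu_{\rev}^{-1} = \nu \circ \mu_{\rev}$, which follows from $\mu_{\rev}^{2} = \nu^{-1}$ (Proposition \ref{p:double_mu}) together with the fact that the autoequivalence $\nu$ commutes with mutation, yields $\mu_{\rev}^{-1}(\E) \subset \nu(\dzero)$. Therefore $\mu_{\rev}^{-1}(\E) \subset \nu(\dzero) \cap \dm$.

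The one remaining and only slightly delicate step, which I expect to be the main obstacle, is the explicit identification $\nu(\dzero) \cap \dm = \dmoneplus$. Using that $\nu$ sends an indecomposable projective $P$ to the corresponding indecomposable injective $I$ (both in degree $0$), and sends a non-projective indecomposable $M$ in degree $i$ to $\tau M$ in degree $i+1$, one sees that $\nu(\dzero)$ consists exactly of the indecomposables that are injective in degree $0$ together with arbitrary indecomposable modules in degrees $\geq 1$. Intersecting with $\dm$ trims off everything above degree $m$, leaving precisely the indecomposables in $\dmoneplus$. This completes the verification and hence the proof.
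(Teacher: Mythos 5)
Your proof is correct and follows essentially the same path as the paper's: use Corollary \ref{cor2} to get $\mu_{\rev}^{-1}(\E)\subset\dm$, and Corollary \ref{cor1} together with Proposition \ref{p:double_mu} to get $\mu_{\rev}^{-1}(\E)\subset\nu(\dzero)$, then intersect. Two small notes: the identity $\mu_{\rev}^{-1}(\E)=\nu\mu_{\rev}(\E)$ follows from Proposition \ref{p:double_mu} simply by substituting $\Y=\mu_{\rev}^{-1}(\E)$, without needing to invoke that $\nu$ commutes with mutation; and your explicit verification that $\nu(\dzero)\cap\dm=\dmoneplus$ spells out a step the paper leaves implicit, which is a worthwhile addition.
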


\begin{proof} 
By Proposition \ref{p:double_mu} we have that
$\mu_{\rev}(\E)=\nu^{-1}\mu_{\rev}^{-1}(\E)$ and hence 
$\mu_{\rev}^{-1}(\E) = \nu \mu_{\rev}(\E)$. By Corollary \ref {cor1} we have that
$\mu_{\rev}(\E)$ is contained in $\dzero$. Hence $\mu_\rev^{-1}(\E)$ is contained in $\nu(\dzero)$. 
We also know
that $\mu_\rev^{-1}(\E)$ is contained in $\dm$ by Corollary \ref{cor2}.
This completes the proof.  
\end{proof}

Summarizing, we obtain part (c) of Theorem \ref{maintheorem}.

\begin{theorem}
The product of mutations $\mu_{\rev}$ defines a
bijection between $m$-cluster-tilting objects and $m$-$\Hom_{\leq 0}$-configurations.
\end{theorem}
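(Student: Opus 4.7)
The plan is straightforward: combine the already-established object-level bijection of Theorem \ref{partb} with the two propositions (\ref{movingleft} and \ref{movingright}) that govern how $\mu_{\rev}$ interacts with the degree constraints defining the $m$-classes. In particular, I want to argue that the bijection $\mu_{\rev}\colon\{\text{silting objects}\}\to\{\Homneg\text{-configurations}\}$ restricts, on both sides, to a bijection between the appropriate $m$-subclasses. Recall that $m$-cluster tilting objects are exactly the silting objects sitting in $\dmoneplus$, and $m$-$\Homneg$-configurations are exactly the $\Homneg$-configurations sitting in $\dmzero$, so the task reduces to verifying that $\mu_{\rev}$ maps the former subclass into the latter and its inverse maps the latter into the former.

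For the forward direction, I would take an $m$-cluster tilting object $T$ and, by Lemma \ref{l:silt_exc}, choose an exceptional sequence $\E$ of its indecomposable summands; since $T\in\dmoneplus$, so is $\E$. Proposition \ref{movingleft} then gives that $\mu_{\rev}(\E)$ is an $m$-$\Homneg$-configuration exceptional sequence, i.e.\ an exceptional sequence lying in $\dmzero$ whose underlying object is a $\Homneg$-configuration. By Theorem \ref{partb}, the object underlying $\mu_{\rev}(\E)$ depends only on $T$ and not on the choice of $\E$, so we obtain a well-defined map at the object level sending $m$-cluster tilting objects to $m$-$\Homneg$-configurations.

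For the reverse direction, I would do the symmetric argument: given an $m$-$\Homneg$-configuration $X$, pick any $\Homneg$-configuration exceptional sequence $\E$ for $X$ (which lies in $\dmzero$ by assumption), and apply Proposition \ref{movingright} to conclude that $\mu_{\rev}^{-1}(\E)$ is an exceptional sequence in $\dmoneplus$ whose underlying object is an $m$-cluster tilting object. Since $\mu_{\rev}$ and $\mu_{\rev}^{-1}$ are mutually inverse on exceptional sequences and Theorem \ref{partb} already promotes $\mu_{\rev}$ to an inverse bijection on objects, these two restrictions are mutually inverse bijections between the $m$-subclasses.

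There is essentially no hard step remaining: the technical content was already absorbed into Theorem \ref{partb} (well-definedness of $\mu_{\rev}$ at the object level, via the analysis of stabilizers and the $R_{\E}$ relation) and into Propositions \ref{movingleft}--\ref{movingright} (preservation of the degree windows $\dmoneplus$ and $\dmzero$, proved via the splitting torsion/torsion-free class arguments together with Corollaries \ref{cor1} and \ref{cor2}). The only thing I have to be careful about is the bookkeeping between exceptional sequences and the objects they represent, and this is handled by choosing any exceptional-sequence lift on each side and invoking Theorem \ref{partb} to guarantee the result is independent of that choice.
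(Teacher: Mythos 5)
Your proposal is correct and takes essentially the same route as the paper: the paper's own proof simply states that the result is a direct consequence of Propositions \ref{movingright} and \ref{movingleft} together with the bijections from Theorems \ref{parta} and \ref{partb}, and your argument is a careful unpacking of exactly that logic (lifting to exceptional sequences, applying the two propositions to restrict the degree windows, and using Theorem \ref{partb} to pass back to well-defined maps at the object level).
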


\begin{proof}
This is a direct consequence of Propositions \ref{movingright} and \ref{movingleft}, using the
already established bijections from Theorems \ref{parta} and \ref{partb}.
\end{proof}

\section{A combinatorial interpretation: $m$-noncrossing partitions}\label{s:combi}

In this section, we give our desired combinatorial interpretation of part (c) of 
Theorem \ref{maintheorem}.  The main task of this section is to construct, for an arbitrary connected hereditary artin algebra $H$, 
a bijection between $m$-$\Homneg$-configurations and 
$m$-noncrossing partitions in the sense of \cite{arm} for the reflection
group $W$ corresponding to $H$.  

The set of $m$-clusters is only defined in 
the case that $H$ is of finite type; in this case, they are known to 
be in bijection with the $m$-cluster tilting objects \cite{t,z}.  Thus,
once we have accomplished the main task of this section, we will have 
obtained a bijection between $m$-clusters and $m$-noncrossing partitions
for $H$ of finite type (or equivalently, for $W$ any finite crystallographic
reflection group).  A description of the resulting bijection, in purely
Coxeter-theoretic terms, has already been
presented, without proof, in \cite{brt-fpsac}.

\subsection{Weyl groups and noncrossing partitions}
We define the Weyl group $W$ associated to $H$ following \cite{r}.  
Let $k$ be the centre of 
$H$ (which is a field since we have assumed that $H$ is connected).  
Number the simple objects of $H$ in such a way that 
$(S_1,\dots,S_n)$ is an exceptional sequence.  The Grothendieck group 
of $H$, denoted $K_0(H)$, is a free abelian group generated by the classes
$[S_i]$.  

For $i<j$, define 
\begin{align*}
\Delta_{ij}&=-\dim_{\End(S_i)}(\Ext^1(S_i,S_j))\\
\Delta_{ji}&=-\dim_{\End(S_j)}(\Ext^1(S_i,S_j))
\end{align*}
Write $d_i$ for the $k$-dimension of $\End(S_i)$.  Note that 
$d_i\Delta_{ij}=d_j\Delta_{ji}$.  
Now define a 
symmetric, bilinear form on $K_0(H)$ by $([S_i],[S_j])=d_i\Delta_{ij}$ for
$i\ne j$, and $([S_i],[S_i])=2d_i$.

For $x$ in $K_0(H)$, with $(x,x)\ne 0$, 
define $t_x$, the reflection along $x$, by:

$$t_x(v)=v - \frac{2(v,x)}{(x,x)} x$$

We now have the following lemma:
\begin{lemma} \label{formlemma}
\begin{enumerate}\item[(a)] For $A,B$ modules, we have  
$([A],[B])= \dim_k\Hom(A,B)+\dim_k\Hom(B,A)-\dim_k\Ext^1(A,B) -
\dim_k\Ext^1(B,A).$ 
\item[(b)]
If $(A,B)$ is an exceptional sequence in $\D$, 
and $(B,A^*)$ is the result of mutating it, then:
$$[A^*]= t_{[B]}[A].$$
\end{enumerate}
\end{lemma}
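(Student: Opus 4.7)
For part (a), the plan is to observe that both sides are symmetric bilinear forms on $K_0(H)$ and then check equality on the basis of simple classes. The left side is symmetric bilinear by construction. For the right side, I would introduce the Euler form $\langle M,N\rangle := \dim_k\Hom(M,N) - \dim_k\Ext^1(M,N)$; because $H$ is hereditary there are no higher Ext's, so $\langle\cdot,\cdot\rangle$ is additive on short exact sequences in each variable and descends to a bilinear form on $K_0(H)$. The right side then equals $\langle[A],[B]\rangle + \langle[B],[A]\rangle$, which is manifestly symmetric bilinear. On the basis $\{[S_i]\}$: for $i<j$ the exceptionality of $(S_1,\dots,S_n)$ together with $i\ne j$ kills every term except $-\dim_k\Ext^1(S_i,S_j) = d_i\Delta_{ij} = ([S_i],[S_j])$, while for $i=j$ one obtains $2\dim_k\End(S_i)=2d_i$, matching the definition of the form.

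For (b), I would extract $[A^*]$ directly from the mutation triangle $A^* \to A \to Z \to A^*[1]$, where $Z$ is the minimal left $\th(B)$-approximation of $A$. By Lemma \ref{atmostone}, $\Ext^i(A,B)$ is nonzero for at most one integer $p$, so $Z \simeq B^r[p]$ with $r := \dim_{\End(B)}\Ext^p(A,B)$. Passing to $K_0(H)$ via additivity on triangles and $[M[1]]=-[M]$ yields
\[ [A^*] = [A] - (-1)^p r\,[B]. \]
It then remains to show $(-1)^p r = 2([A],[B])/([B],[B])$. To evaluate the form, I would first extend (a) to $\D$ by bilinearity: writing each indecomposable as a shift of a module and using $\Ext^i_{\D}(M[u],N[v]) = \Ext^{v-u+i}_H(M,N)$, one checks
\[ ([C],[D]) = \sum_{i\in\mathbb Z}(-1)^i\bigl(\dim_k\Ext^i(C,D) + \dim_k\Ext^i(D,C)\bigr) \]
for all $C,D\in\D$. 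Exceptionality of $(A,B)$ kills $\Ext^\bullet(B,A)$, and only $\Ext^p(A,B)$ survives, with $k$-dimension $r\dim_k\End(B)$; hence $([A],[B]) = (-1)^p r \dim_k\End(B)$. Exceptionality of $B$ similarly gives $([B],[B]) = 2\dim_k\End(B)$, and the ratio equals $(-1)^p r$, finishing the proof.

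The only delicate points are the sign conventions when passing to the derived category (handled uniformly by $[M[u]] = (-1)^u[M]$) and the recognition that the approximation $Z$ is concentrated in a single degree. The latter is precisely the content of Lemma \ref{atmostone}, so once it is invoked the remainder is mechanical bookkeeping; I would not expect any serious obstacle beyond keeping track of signs.
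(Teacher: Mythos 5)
Your proof is correct, and for part (a) it takes a genuinely different (more self-contained) route than the paper, which simply cites Ringel for this identity. You observe that both sides descend to symmetric bilinear forms on $K_0(H)$ --- the left side by construction, the right side because it is the symmetrization of the Euler form, which descends to $K_0(H)$ precisely because $H$ is hereditary --- and then you match the two on the basis $\{[S_i]\}$, using that the simples form an exceptional sequence (so $\Ext^1(S_i,S_i)=0$, $\Hom(S_i,S_j)=0$ for $i\ne j$, and $\Ext^1(S_j,S_i)=0$ for $j>i$) together with $\dim_k V = \dim_k\End(S_i)\cdot\dim_{\End(S_i)}V$. This buys you a complete elementary argument at the cost of a short extra computation. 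For part (b), your argument is essentially the paper's: read $[A^*]$ off the approximation triangle and match against the reflection formula by extending (a) to $\D$. The one presentational difference is that you carry the shift $p$ and the sign $(-1)^p$ throughout, whereas the paper normalizes to $p=0$ without loss of generality (replacing $B$ by $B[p]$ changes neither $\th(B)$ nor $t_{[B]}$, since $t_{-x}=t_x$) and then separates the cases $\Hom(A,B)\ne 0$ and $f=0$. Both are sound; your version has more uniform sign bookkeeping, the paper's is a little shorter. Two small things worth spelling out in your write-up: the identification $r=\dim_{\End(B)}\Ext^p(A,B)$ requires the observation that $\End(B)$ is a division ring so that the minimal left $\th(B)$-approximation is given by an $\End(B)$-basis of $\Hom(A,B[p])$; and the degenerate case $\Ext^\bullet(A,B)=0$ (so no $p$ is singled out) should be noted explicitly, though there $Z=0$, $r=0$, $([A],[B])=0$ and both sides reduce to $[A]$.
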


\begin{proof} (a) See \cite[p. 279]{rin1}.

(b) Let $(A,B)$ be an exceptional sequence in $\D$.  Consider the triangle
$A^*\rightarrow A \rightarrow B^r[p]$, where $f:A\rightarrow B^r[p]$ is a 
minimal left $\th(B)$-approximation.  Note that we know that 
$\Hom(A,B[j])=0$ for all but at most one $j$.  Without loss of generality,
assume $p=0$.

Assume that $f:A\rightarrow B^r$ is non-zero.  We have that 
$K_0(H)\isom K_0(\D)$, and the above triangle gives $[A^*]=[A]-
r[B]$.  Since $B$ is exceptional, and hence $\End(B)$ is a division
ring, it follows directly that $f=(f_1,\dots,f_r)$ where 
$\{f_1,\dots,f_r\}$ is an $\End(B)$-basis for $\Hom(A,B)$.  Hence, 
$r=\dim_{\End(B)}\Hom(A,B)$.  Since 
$$t_{[B]}[A] = [A]-\frac{2([A],[B])}{([B],[B])} [B],$$
it suffices to show that $r= 2([A],[B])/([B],[B])$.  We have that
$([A],[B])=\dim_k\Hom(A,B)$ by extending (a) to $\D$.  (We use here
that $\Ext^i(B,A)=0$ since $(A,B)$ is an exceptional sequence,
and $\Ext^1(A,B)=0$ since $\Hom(A,B)\ne 0$.) 
By (a), we similarly get that $([B],[B])=
2\dim_k(\Hom(B,B))$.  Hence we have:
$$\frac{2([A],[B])}{([B],[B])} = \frac{2\dim_k\Hom(A,B)}{2\dim_k\Hom(B,B)}
= \dim_{\End(B)}\Hom(A,B)=r,$$
and we are done in this case.

If $f=0$, so that $r=0$, then $[A^*]=[A]$, and $([A],[B])=0$, so 
$t_{[B]}([A])=[A]$, as desired.  
\end{proof}

We now define $s_i=t_{[S_i]}$ for $1\leq i \leq n$, and let $W$ be the group
 generated by the set $\{s_1,\dots,s_n\}$; it is a Weyl group.  
By definition, we say that
an element of $W$ is a {\em reflection} if it is the conjugate of some $s_i$. 
We denote the set of all reflections in $W$ by $T$.  

It follows directly from Lemma \ref{formlemma} that if $(E_i,E_{i+1})$ and $(E_{i+1},E_i^*)$ are related
by mutation, then 
\begin{equation}\label{lengthtwo}
t_{[E_i]}t_{[E_{i+1}]}=t_{[E_{i+1}]}t_{[E_i^*]}.\end{equation}  
Since the 
reflections corresponding to the simple objects are all in $W$, it follows
from (\ref{lengthtwo}) that $t_{[E]}$ is in $W$ for each exceptional module $E$.
 It also follows
from (\ref{lengthtwo}) that the product of the reflections corresponding to any
exceptional sequence is the Coxeter element $c$ (see \cite{it}).  

Conversely, we have the following result from \cite{is}:

\begin{theorem}\label{is} If $c=t_1\dots t_n$ in $W$, with $t_i\in T$,
then each $t_i$ must be of the form $t_{[E_i]}$ for some 
exceptional module $E_i$, where $(E_1,\dots,E_n)$ forms an
exceptional sequence.   
\end{theorem}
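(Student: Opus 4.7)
The plan is to combine two transitivity statements. On the algebraic side, Proposition \ref{oldstuff}(f) says that mutations act transitively on complete exceptional sequences in $\mod H$. On the combinatorial side, the Hurwitz action of the braid group $B_n$ is transitive on minimal reflection factorizations of the Coxeter element $c$ in $W$. The bridge is equation (\ref{lengthtwo}), which shows that a single mutation $\mu_i$ of an exceptional sequence $(E_1,\dots,E_n)$ corresponds, under $E \mapsto t_{[E]}$, to the $i$th Hurwitz generator acting on the tuple $(t_{[E_1]},\dots,t_{[E_n]})$: the adjacent pair $(t_{[E_i]}, t_{[E_{i+1}]})$ is replaced by $(t_{[E_{i+1}]},\, t_{[E_{i+1}]}^{-1} t_{[E_i]} t_{[E_{i+1}]})$. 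Consequently the assignment $(E_1,\dots,E_n) \mapsto (t_{[E_1]},\dots,t_{[E_n]})$ is $B_n$-equivariant from complete exceptional sequences to reduced reflection factorizations of $c$.

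First I would record the base case: the exceptional sequence $(S_1,\dots,S_n)$ of simples gives the reference factorization $c = t_{[S_1]} \cdots t_{[S_n]} = s_1 \cdots s_n$, which simultaneously defines $c$ and shows $\ell_T(c) \leq n$; hence any factorization $c = t_1 \cdots t_n$ with $t_i \in T$ is automatically reduced. By Hurwitz transitivity there exists $\sigma \in B_n$ with $\sigma \cdot (s_1,\dots,s_n) = (t_1,\dots,t_n)$. Applying the corresponding word of mutations to $(S_1,\dots,S_n)$ produces a complete exceptional sequence $(E_1,\dots,E_n)$ whose reflection tuple, by iterated use of (\ref{lengthtwo}), is exactly $(t_1,\dots,t_n)$, as required.

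The main obstacle is Hurwitz transitivity itself, which is a nontrivial combinatorial input (due in the finite crystallographic case to Bessis). A more self-contained representation-theoretic alternative would be to induct on $n$: show first that in any reduced factorization $c = t_1 \cdots t_n$ the leftmost reflection $t_1$ must be of the form $t_{[E_1]}$ for some exceptional module $E_1$ (using that every real root of $W$ is the dimension vector of an exceptional $H$-module, together with a positivity/length argument that forces $t_1$ to be such a reflection), then pass to the perpendicular hereditary subcategory $E_1^\perp$, in which $t_2 \cdots t_n$ becomes a minimal reflection factorization of the Coxeter element of the rank-$(n-1)$ residual Weyl group, and apply the inductive hypothesis to build the tail $(E_2,\dots,E_n)$. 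Either route reduces the theorem to a single transitivity statement rather than a direct analysis of arbitrary reflection factorizations.
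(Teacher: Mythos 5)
The paper does not prove this statement: it is quoted verbatim from \cite{is} (Igusa--Schiffler), so there is no ``paper's own proof'' to compare against. Your first route is the standard ``two transitive braid actions plus an equivariant map'' argument, and the equivariance step is correct: by (\ref{lengthtwo}), $t_{[E_i^*]}=t_{[E_{i+1}]}^{-1}t_{[E_i]}t_{[E_{i+1}]}$, so $(E_1,\dots,E_n)\mapsto(t_{[E_1]},\dots,t_{[E_n]})$ intertwines $\mu_i$ with the $i$th Hurwitz generator, and Proposition \ref{oldstuff}(f) gives transitivity on the source. The one thing to be careful about is the status of the Hurwitz-transitivity input for the Coxeter element. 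The theorem is stated (and used, e.g.\ in Theorem \ref{conftonc}) for an \emph{arbitrary} connected hereditary artin algebra, so $W$ may be affine or of wild type. For finite $W$ the appeal to Bessis is fine, but for general crystallographic $W$ the representation-theoretic proof of Hurwitz transitivity on reduced $T$-factorizations of $c$ is precisely Igusa--Schiffler's theorem (they prove the surjectivity directly and deduce Hurwitz transitivity as a corollary), so citing Hurwitz transitivity here would be circular unless you invoke a later purely Coxeter-theoretic proof (Baumeister--Dyer--Stump--Wegener). Also a small slip: you write ``$\ell_T(c)\le n$, hence any length-$n$ factorization is reduced''---you need $\ell_T(c)=n$ (which is true, but not a consequence of the inequality).

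Your second, inductive route is closer in spirit to the actual proof in \cite{is}, but the parenthetical justification contains a genuine error: it is \emph{not} true in general that every real root of $W$ is the dimension vector of an exceptional $H$-module. Only the real \emph{Schur} roots have this property, and outside Dynkin type these form a proper subset of the positive real roots. The substantive content of the Igusa--Schiffler theorem is precisely to show that any reflection occurring at the left of a reduced $T$-factorization of $c$ corresponds to a real Schur root; a generic ``positivity/length argument'' does not establish this, and once you have it the passage to $E_1^\perp$ and induction is routine. So the second route, as sketched, assumes the hard step rather than proving it.
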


Now we give the (purely Coxeter-theoretic) 
definition of an $m$-noncrossing partition.  
First of all,
define a function $\ell_T:W\rightarrow \mathbb N$, where $\ell_T(w)$ is
the length of the shortest expression for $w$ as a product of reflections.
(Note that this is not the classical length function on $W$, which is
the minimum length of an expression for $w$ as a product of 
{\it simple} reflections.)  We note that $\ell_T(c)=n$.  

We say that $(u_1,\dots,u_r)$, an $r$-tuple of elements of $W$, 
is a {\em $T$-reduced expression} for $u_1\dots u_r$
if 
$\ell_T(u_1)+\dots+\ell_T(u_r)=\ell_T(u_1\dots u_r)$.  We can now follow
Armstrong \cite{arm} in defining
the {\em$m$-noncrossing partitions} for $W$ to consist of the set of 
$T$-reduced expressions for $c$ with $m+1$ terms.  

Now we define the bijection.  Let $(u_1,\dots ,u_{m+1})$ be a $T$-reduced
expression for $c$.  By Theorem \ref{is}, pick an exceptional sequence
$E_1,\dots,E_n$ such that the first $\ell_T(u_1)$ terms correspond to
some factorization of $u_1$ into reflections, and similarly for the 
next $\ell_T(u_2)$ terms, and so on.  
For each $i$ with $1\leq i \leq m+1$, we then
have an exceptional sequence $\mathcal E_i$.  Write 
$\mathcal C_i$ for the minimal abelian subcategory containing 
$\mathcal E_i$.  Let $F_i$ be the sum of the simples of $\mathcal C_i$.  
Set $\phi(u_1,\dots,u_{m+1})=\bigoplus F_i[m+1-i]$.  

\begin{theorem} \label{conftonc} The above map $\phi$ from $T$-reduced expressions of $c$ to
objects in $\D$ is a bijection from $m$-noncrossing partitions to
$m$-$\Homneg$-configurations.  \end{theorem}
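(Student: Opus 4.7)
The plan is to show that $\phi$ is well-defined, lands in the set of $m$-$\Homneg$-configurations, and admits an explicit inverse. The main external input is the Ingalls--Thomas correspondence \cite{it}: to each $u \leq_T c$ in $W$ one assigns the wide subcategory $\mathcal C(u) \subset \mod H$ generated by any exceptional sequence of reflections factorizing $u$, and $\mathcal C(u)$ depends only on $u$; moreover $\mathcal C(u)$ is equivalent to $\mod H_u$ for some hereditary $H_u$ of rank $\ell_T(u)$, and has exactly $\ell_T(u)$ simples. Applied to each $u_i$ this makes $\mathcal C_i$ and $F_i$ well-defined, and $T$-reducedness gives $\sum_i \ell_T(u_i) = n$, which is (H1).

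For the forward direction, start from a complete exceptional sequence $(E_1,\dots,E_n)$ whose $i$-th block factorizes $u_i$, and mutate within each block (preserving the global exceptional property) to replace that block by the exceptional sequence of simples of $\mathcal C_i$. The resulting complete exceptional sequence in $\mod H$ yields $\Hom(F_k,F_j) = \Ext^1(F_k,F_j) = 0$ whenever $k > j$. Translating through the shifts via $\Hom_{\D}(A[a],B[b]) = \Ext^{b-a}(A,B)$ then gives (H2) and (H3); containment in $\dmzero$ is immediate from the shifts $m+1-i \in \{0,\dots,m\}$. For (H4), observe that $\Ext^1_{\D}(F_k[m+1-k],F_l[m+1-l]) = \Ext^{1+k-l}(F_k,F_l)$ can be nonzero only for $l = k$ (internal $\Ext^1$ inside $\mathcal C_k$) or $l = k+1$ (given by $\Hom(F_k,F_{k+1})$ in $\mod H$); hence every $\Ext^1$-arrow among summands either stays within a block or advances by exactly one block, so any cycle must live inside a single $\mathcal C_k$, where none exists by acyclicity of the quiver of $H_k$.

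For the inverse, decompose an $m$-$\Homneg$-configuration $X$ by shift as $X = \bigoplus_{i=1}^{m+1} F_i[m+1-i]$ with $F_i \in \mod H$. By Lemma \ref{hfour} together with (H2), the summands of each $F_i$ form a Hom-orthogonal complete exceptional sequence in the wide subcategory $\mathcal C_i$ they generate, and are therefore precisely its simples. Define $u_i$ as the product of the corresponding reflections in exceptional-sequence order; by \cite{it}, $u_i$ depends only on $\mathcal C_i$ and $\ell_T(u_i)$ equals the number of summands of $F_i$. The combined content of (H2) and (H3) yields $\Hom(F_k,F_j) = \Ext^1(F_k,F_j) = 0$ for $k > j$ in $\mod H$, so the concatenation of within-block exceptional sequences in the order $\mathcal C_1,\dots,\mathcal C_{m+1}$ is a complete exceptional sequence in $\mod H$; hence $u_1 \cdots u_{m+1} = c$, and $\sum_i \ell_T(u_i) = n$ makes this factorization $T$-reduced.

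The main obstacle is the structural identification used in the inverse construction: that a Hom-orthogonal complete exceptional sequence in a hereditary wide subcategory coincides with its set of simples. I expect this to follow by induction on the rank of $\mathcal C$, using the orthogonal decomposition $\mathcal C = \mathcal B \vee \langle T_r \rangle$ for $T_r$ the last entry of the sequence, noting that no simple subobject of $T_r$ in $\mathcal C$ can coincide with any earlier $T_j$ by Hom-orthogonality, and applying the inductive hypothesis to $\mathcal B = \langle T_1,\dots,T_{r-1}\rangle$. Granted this identification, the two constructions invert each other by direct inspection: $\phi$ passes from a $T$-reduced factorization through its wide-subcategory decomposition to the sum of shifted simples, while the inverse reads the same data off the shift filtration of $X$ and recovers the Coxeter factorization via \cite{it}.
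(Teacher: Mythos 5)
Your argument is correct and follows essentially the same route as the paper: for the forward direction, mutate each block to the simples of $\mathcal C_i$ and read off (H1)--(H4) from the resulting complete exceptional sequence, and for the inverse, read off the simples per shift and invoke the Ingalls--Thomas correspondence for well-definedness and $T$-reducedness. The ``main obstacle'' you flag is also left unproved in the paper (it writes that the summands ``are obviously the simple objects''); the cleanest justification is direct rather than inductive: if $0\ne A \subsetneq T_i$ with $A$ filtered by $T_1,\dots,T_r$, then the bottom layer of a filtration of $A$ gives a nonzero map $T_j\to T_i$, so $j=i$ by Hom-orthogonality and $A=T_i$ since $\End(T_i)$ is a division ring; hence each $T_i$ is simple in $\mathcal C$, and conversely any simple of $\mathcal C$ receives a monomorphism from some $T_j$ and so equals it. One small remark: for (H4) the paper uses the slicker observation (stated after Lemma \ref{hfour}) that existence of \emph{any} complete exceptional ordering of the summands already forces (H4), which replaces your block-by-block acyclicity argument, though both are valid.
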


\begin{proof} First, we show that if $(u_1,\dots,u_{m+1})$ is a 
$T$-reduced expression for $c$, then $\bigoplus F_i[m+1-i]=\phi(u_1,\dots,u_{m+1})$
is an $m$-$\Homneg$-configuration.  
By definition, $\bigoplus F_i[m+1-i]$ is contained in
$\mathcal D_{\geq 0}^{\leq m}$.  We check the four conditions in the 
definition of a $\Homneg$-configuration.  (H1) is immediate.  
It is possible
to transform each sequence $\mathcal E_i$ into (an ordering of) 
the summands of $F_i$ by mutations, thanks to the transitivity of the action
of mutations within $\mathcal C_i$.  (H4) follows, and the form of
this exceptional sequence guarantees (H2) and (H3).

Next, we show that any $m$-$\Homneg$-configuration arises in this way.  
Take $X$ to be an $m$-$\Homneg$-configuration and order it into an
exceptional sequence in such a way that the objects in 
$\mod H[m]$ come first, then those in $\mod H[m-1]$, etc.  This was shown
to be possible in the proof of Lemma \ref{hfour}.  

Now, for $1\leq i \leq m+1$, define $\mathcal C_i$ to be the 
subcategory of $\mod H$ consisting of modules admitting a filtration
by modules corresponding to the summands in $X$ of degree $m+1-i$.  This is
the minimal abelian subcategory of $\mod H$ containing these summands
of $X$, and the summands of $X$ are obviously the simple objects in this
subcategory.  We can therefore define $u_i$ by taking the product of
these summands of $X$, ordered as in the exceptional sequence, and we
obtain a $T$-reduced expression for $c$.  
\end{proof}

\subsection{Combinatorics of positive Fuss-Catalan numbers}
When $H$ is of finite type, corresponding to a finite 
crystallographic group $W$, there is a variant of the Fuss-Catalan
number called the {\it positive Fuss-Catalan number}, denoted
$C^+_m(W)$.  By definition, $C^+_m(W)=|C_{-m-1}(W)|$.  

It is known that the number of 
$m$-cluster tilting objects contained in $\dmone$ is  
$C^+_m(W)$, see \cite{fr}.

Write
$\dmzerominus$ for the full subcategory of
$\dmzero$ additively generated by the indecomposable
objects of $\dmzero$ other than the summands of $H$.  
The following is an immediate corollary of Theorem \ref{maintheorem}.

\begin{corollary} \label{positivebij}
\begin{enumerate}\item[(a)]There is a bijection between silting objects contained
in $\dmone$ and $m$-$\Homneg$-configurations contained in 
$\dmzerominus$ given by 
$\mu_{\rev}$.  
\item[(b)] If $H$ is of Dynkin type with corresponding
crystallographic reflection group $W$, then the number of 
$m$-$\Homneg$-configurations contained in $\dmzerominus$
is $C^+_m(W)$.  
\end{enumerate} 
\end{corollary}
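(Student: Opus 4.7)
The plan is to deduce (b) immediately from (a) and \cite{fr}: the latter gives that the number of $m$-cluster tilting objects contained in $\dmone$ equals $C^+_m(W)$, and the bijection in (a) transports this count to $m$-$\Homneg$-configurations contained in $\dmzerominus$.

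For (a), the key observation is that the only indecomposable objects of $\dmoneplus$ not lying in $\dmone$ are the injective $H$-modules in degree $0$, and symmetrically the only indecomposable objects of $\dmzero$ not lying in $\dmzerominus$ are the projective $H$-modules in degree $0$. Since $\mu_{\rev}$ already gives a bijection between $m$-cluster tilting objects and $m$-$\Homneg$-configurations by Theorem \ref{maintheorem}(c), it suffices to show that a silting object $T$ in $\dmoneplus$ has an indecomposable injective summand if and only if $\mu_{\rev}(T)$ has an indecomposable projective summand. The bridge is the Nakayama functor $\nu$: indecomposable injectives and indecomposable projectives in degree $0$ are exchanged via $\nu(P)=I$.

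For the forward implication, suppose $T$ has an injective summand. I would pick an injective summand $I^{(1)}$ of $T$ admitting no nonzero morphism from any other injective summand of $T$; such $I^{(1)}$ exists because the direct sum of the injective summands of $T$ is rigid, so its endomorphism quiver is acyclic by \cite{hr}. Any non-injective summand $Y$ of $T$ lies in $\mod H[k]$ with $k\geq 1$, which forces $\Hom(Y,I^{(1)})=0$ on degree grounds, while the silting property gives $\Ext^1(Y,I^{(1)})=0$ for every summand $Y$. Hence $I^{(1)}$ can be placed at position $1$ of a complete exceptional sequence of $T$ whose remaining terms are ordered via Lemma \ref{l:silt_exc}. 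Applying $\mu_{\rev}$ in the presentation (\ref{altorder}), the leading subproduct $\mu_{n-1}\dots\mu_1$ moves position $1$ to position $n$ as $\nu^{-1}$ of that entry, by Lemma \ref{mutationlemma}(c); since all subsequent subproducts omit $\mu_{n-1}$, the last entry of $\mu_{\rev}(T)$ remains $\nu^{-1}(I^{(1)})$, which is an indecomposable projective summand of $\mu_{\rev}(T)$ in degree $0$.

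The reverse implication is analogous, using presentation (\ref{order}) together with Proposition \ref{p:double_mu}. Given a projective summand $P$ of $X=\mu_{\rev}(T)$, condition (H2) together with the projectivity of $P$ ensure that $\Hom(P,Y)=\Ext^1(P,Y)=0$ for every other summand $Y$ of $X$ (using that $X$ lies in non-negative degrees), so $P$ can be placed at position $n$ of a complete exceptional-sequence ordering of $X$. Then $\mu_{\rev}$ in presentation (\ref{order}) transports the last position to position $1$ unchanged, so $P$ appears as the first summand of $\mu_{\rev}(X)=\nu^{-1}(T)$ by Proposition \ref{p:double_mu}, and therefore $\nu(P)$ is an indecomposable injective summand of $T$. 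The main obstacle will be the forward direction: the injective summand must be chosen carefully so that it extends to a complete exceptional sequence at position $1$, whereas in the reverse direction any projective summand can be placed last thanks to (H2).
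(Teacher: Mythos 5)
Your argument is correct in outline, but it takes a considerably more hands-on route than the paper, which treats the corollary as immediate from the torsion-class machinery of Section~\ref{s:cluster_config}. The implicit argument there is: $\dmone = \D^{\geq 1}\cap\D_{\leq m}$, where $\D^{\geq 1}$ is a splitting torsion class and $\D_{\leq m}$ a splitting torsion-free class; by the two unnumbered lemmas at the start of Section~\ref{s:cluster_config}, $\mu_{\rev}$ carries an exceptional sequence in $\dmone$ into $\nu^{-1}(\D^{\geq 1})\cap\D_{\leq m}=\dmzerominus$, and the converse follows from Corollaries~\ref{cor1} and~\ref{cor2} exactly as in Propositions~\ref{movingleft} and~\ref{movingright}. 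Your explicit tracking --- showing that a degree-$0$ injective placed first in a silting exceptional sequence travels to the last slot of $\mu_{\rev}(T)$ as a degree-$0$ projective, and conversely --- is a genuinely different proof; it has the merit of pinpointing exactly where the excluded summands go, but it re-derives by hand a special case of what the torsion-class lemmas give uniformly.

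Two of your $\Hom$-vanishing steps need tightening, because you conflate $\Hom$-groups in $\D$ with $\Hom$-groups of the underlying modules, and it is the latter that the definition of exceptional sequence requires. To put $I^{(1)}$ at position $1$ you need $(\overline{I^{(1)}},\overline Y)$ exceptional in $\mod H$ for every other summand $Y$, i.e.\ $\Hom_{\mod H}(\overline Y, I^{(1)})=0$; the degree argument only gives $\Hom_\D(Y,I^{(1)})=0$, which is a different group. For $Y=M[k]$ with $k\geq 1$ one has $\Hom_{\mod H}(M,I^{(1)})=\Ext^k_\D(Y,I^{(1)})$, so what you need is the silting condition in degree $k$, not only $\Ext^1$. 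Dually, putting $P$ at position $n$ requires $\Hom_{\mod H}(P,\overline Y)=0$; for $Y=M[k]$ with $k\geq 1$, $\Hom_{\mod H}(P,M)=\Ext^{-k}_\D(P,Y)$, which is killed by condition (H3), not by (H2) or projectivity of $P$ as you state. With these corrections (and noting that $\Ext^1_{\mod H}(\cdot,I^{(1)})$ and $\Ext^1_{\mod H}(P,\cdot)$ vanish by injectivity and projectivity) your argument closes, and part~(b) then follows from part~(a) together with \cite{fr}, exactly as you and the paper both observe.
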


It is possible to give a Coxeter-theoretic description of
the subset of $m$-noncrossing partitions which correspond, 
under the bijection of Theorem \ref{conftonc}, 
to
the $m$-$\Homneg$-configurations contained in $\dmzerominus$.  
See \cite{brt-fpsac} for more details.

\section{The link between $\Homneg$-configurations and Riedtmann's
combinatorial configurations}

In this section we show that our $\Homneg$-configurations contained in
$\donezerominus$ are related to the combinatorial configurations 
introduced by Riedtmann in connection with her work on selfinjective
algebras of finite representation type. Note that an alternative and
independent approach to this, dealing with the Dynkin case, is given by Simoes \cite{sim}.  She also gives a bijection from combinatorial configurations
to a subset of the $1$-noncrossing partitions, and thus to the positive 
clusters (in the sense of the previous section).

\subsection{Complements of tilting modules and cluster-tilting objects}
In this subsection we recall some basic facts about complements
of tilting modules in $\mod H$ and cluster tilting objects in the
associated cluster category.  For more on complements of tilting modules,
see \cite{hu,rs1,u,chu}; for more on complements in cluster categories, see 
\cite{bmrrt}.  

Suppose that $T=\bigoplus_{i=1}^n T_i$ is a tilting object in $\mod H$.
Write $\overline T$ for $\bigoplus _{j\ne i} T_j$.   

We say that an indecomposable object $X$ in  $\mod H$ is a complement to $\overline T$ if $X\oplus \overline T$
is tilting.  If $\overline T$ is not sincere, then $T_i$ is its only
complement; otherwise, it has exactly two complements up to isomorphism, 
$T_i$ and one other
one, $T_i'$.  We say that $T_i'\oplus \overline T$ is the result of 
mutating $T$ at $T_i$.  

\begin{lemma} \label{trich}
Exactly one of the following three possibilities occurs:
\begin{enumerate}
\item [(a)]{\bf $T_i$ has no replacement.}  This occurs if and only if
$\overline T$ is not sincere.  
\item [(b)]{\bf $T_i$ admits a monomorphism to a module in $\add \overline T$.}
In this case, let $T_i \rightarrow B$ be the minimal left 
$\add \overline T$-approximation
to $T_i$.  Then there is a short exact sequence:
$$0 \rightarrow T_i\rightarrow B \rightarrow T_i' \rightarrow 0.$$
\item [(c)]{\bf $T_i$ admits a epimorphism from a module in 
$\add \overline T$.}  In this case, let $B\rightarrow T_i$ be the
minimal right $\add \overline T$-approximation to $T_i$.  Then 
there is a short exact sequence 
$$0 \rightarrow T_i' \rightarrow B \rightarrow T_i \rightarrow 0.$$
\end{enumerate}
\end{lemma}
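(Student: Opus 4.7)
The plan is to reduce Lemma~\ref{trich} to the classical theory of tilting mutation for hereditary algebras as developed in the references \cite{hu,rs1,u,chu} cited just before the statement. The key input is the Happel--Unger dichotomy: an almost complete tilting module $\overline{T}$ over a hereditary algebra admits either one or two indecomposable complements up to isomorphism, with two occurring if and only if $\overline{T}$ is sincere. This immediately yields (a).

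For (b) and (c), I would assume $\overline{T}$ is sincere and let $T_i'$ denote the unique complement not isomorphic to $T_i$. Form the minimal left $\add \overline{T}$-approximation $f : T_i \to B$ and the minimal right $\add \overline{T}$-approximation $g : B' \to T_i$. The argument then splits according to whether (b$'$) $f$ is injective, or (c$'$) $g$ is surjective; sincerity is invoked to show that at least one of these must hold, while the uniqueness from (a) rules out the trivial coincidence of both. In case (b$'$), complete $f$ to a short exact sequence $0 \to T_i \to B \to C \to 0$ with $C := \coker f$. Using that $T_i \oplus \overline{T}$ is rigid and that $f$ is a left $\add \overline{T}$-approximation, an application of $\Hom(-, \overline{T})$ together with Wakamatsu's lemma gives $\Ext^1(C, \overline{T}) = 0$; a further application of $\Hom(-, C)$, combined with $\Ext^1(\overline{T}, C) = 0$ (which follows from the hereditary property and the same short exact sequence), yields $\Ext^1(C, C) = 0$. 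A count of indecomposable summands in $K_0(H)$ then forces $C$ to be indecomposable and non-isomorphic to any summand of $\overline{T}$, so $C \oplus \overline{T}$ is tilting and hence $C \simeq T_i'$ by (a). Case (c$'$) is entirely dual.

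The main obstacle I expect is establishing the dichotomy in the previous paragraph: that when $\overline{T}$ is sincere, precisely one of ``$f$ is a monomorphism'' and ``$g$ is an epimorphism'' occurs. Exclusivity is the easier half, following from the uniqueness in (a) once one checks that each case genuinely produces a complement distinct from $T_i$. The harder half is exhaustiveness, which requires extracting from sincerity of $\overline{T}$ the existence of a non-trivial morphism between $T_i$ and $\add \overline{T}$ oriented in one of the two ways, together with a careful analysis of $\ker f$ and $\coker g$ to see that a non-injective $f$ combined with a non-surjective $g$ is incompatible with $\overline{T}$ being sincere.
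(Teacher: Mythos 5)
The paper states Lemma~\ref{trich} without proof, as a recollection of the classical theory of complements of almost complete tilting modules with pointers to \cite{hu,rs1,u,chu}, so your intent of reducing to that theory is correct, and part (a) is indeed the Happel--Unger result. However, three of the steps you sketch for (b), (c), and the trichotomy do not go through as written.

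The ``count of indecomposable summands in $K_0(H)$'' is not enough to show that $C=\coker f$ is indecomposable: a partial tilting module has at most $n$ pairwise non-isomorphic indecomposable summands, but that does not exclude $C$ having a nonzero direct summand $C_2$ already lying in $\add\overline T$. One needs the minimality of $f$ here: if $C=C_1\oplus C_2$ with $0\ne C_2\in\add\overline T$, then $\Ext^1(C_2,\ker(B\to C_2))=0$ shows $B\twoheadrightarrow C_2$ splits, so $f$ factors through a proper direct summand of $B$, contradicting minimality. Your exclusivity argument (``following from the uniqueness in (a)'') also does not work: if both (b$'$) and (c$'$) held, both would produce the same $T_i'$, and uniqueness yields no contradiction. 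The correct argument is via $\Ext^1$: the sequence $0\to T_i\to B\to T_i'\to 0$ is nonsplit since $T_i\notin\add\overline T$, giving $\Ext^1(T_i',T_i)\ne 0$, and it exhibits $T_i'$ as a quotient of $B\in\add\overline T\subseteq\Fac(T_i\oplus\overline T)$, so $\Ext^1(T_i\oplus\overline T,T_i')=0$; a nonsplit sequence $0\to T_i'\to B'\to T_i\to 0$ would then give $\Ext^1(T_i,T_i')\ne 0$, a contradiction. Finally, the exhaustiveness you flag as the main obstacle is exactly the content of the Riedtmann--Schofield/Happel--Unger exchange sequence theorem: for sincere $\overline T$ the two complements $X,Y$ sit in a canonical short exact sequence $0\to X\to E\to Y\to 0$ with $X\to E$ the minimal left and $E\to Y$ the minimal right $\add\overline T$-approximation, so whichever of $X,Y$ the module $T_i$ is, one of your cases (b$'$), (c$'$) holds. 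You should invoke that theorem directly rather than attempt to rederive it from sincerity.
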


We also think of $\mod H$ as embedded
inside the cluster category associated to $H$.  
A tilting object in $\mod H$ is thereby identified with a (1-)cluster tilting object in the 
cluster category.  
In the cluster category, there is 
always exactly one way to replace $T_i$ by some other indecomposable
object while preserving
the property of being a cluster tilting object.  If there is a replacement
for $T_i$ in $\mod H$, that replacement is also a replacement in 
the cluster category; otherwise, the replacement for $T_i$ is of the form
$P[1]$, where $P$ is indecomposable projective.  

\subsection{Torsion classes arising from partitions of exceptional sequences}  This subsection is mainly devoted to the proof of
Lemma \ref{ttf}, which says that if a complete exceptional sequence
in $\mod H$ is divided into two parts, $(E_1,\dots,E_r)$ and 
$(E_{r+1},\dots,E_n)$, for some $0<r<n$, 
and the objects from the second part are
used to generate a torsion class, then the corresponding torsion-free
class is generated (in a suitable sense) by the objects from the first part 
of the exceptional sequence.  

Let $T$ be a tilting module, $\mathcal T=\Fac T$ the torsion class
generated by $T$, and 
$\mathcal F=\Sub \tau T$ the corresponding torsion-free class.

Some summand $U$ of $T$ (typically not indecomposable) is minimal among
modules such that $\Fac U=\mathcal T$.  We refer to $U$ as the minimal
generator of $\mathcal T$.  Similarly, there is a minimal cogenerator of 
$\mathcal F$.  

We have the following lemma, based on an idea from \cite{it}.

\begin{lemma}\label{altgen} Let $T_i$ be an indecomposable summand of $T$.  
Then $T_i$ is a summand of the minimal generator of $\mathcal T$ if and only if
$\tau T_i$ is not a summand of the minimal cogenerator of 
$\mathcal F$.  (By convention, if $\tau T_i=0$, then
we do not consider it a summand of the minimal cogenerator of $\mathcal F$.)
\end{lemma}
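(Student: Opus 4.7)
My plan is to reduce the biconditional to a cleaner equivalence and then analyze it using Lemma \ref{trich} together with Auslander-Reiten duality. First I would identify both sides: by minimality, $T_i$ is a summand of the minimal generator of $\mathcal T$ iff $T_i \notin \Fac \overline T$, and likewise $\tau T_i$ is a summand of the minimal cogenerator of $\mathcal F$ iff $\tau T_i \notin \Sub \tau \overline T$ (with $\tau T_i = 0$ conventionally excluded). The biconditional then reduces to
\[ T_i \in \Fac \overline T \iff \tau T_i \notin \Sub \tau \overline T. \]
The projective case is immediate: an indecomposable projective summand of basic $T$ cannot lie in $\Fac \overline T$, and $\tau T_i = 0$ is excluded by convention.

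For non-projective $T_i$ I would invoke Lemma \ref{trich}: $T_i \in \Fac \overline T$ occurs precisely in case (c), which comes equipped with a short exact sequence $0 \to T_i' \to B \to T_i \to 0$ with $B \in \add \overline T$, while cases (a) and (b) yield $T_i \notin \Fac \overline T$. In case (b), the defining sequence $0 \to T_i \to B \to T_i' \to 0$ already embeds $T_i$ into $\add \overline T$, and passing through $\tau$ (with care for projective summands) transports this to an embedding $\tau T_i \hookrightarrow \tau B \in \add \tau \overline T$, giving $\tau T_i \in \Sub \tau \overline T$. Case (a), where $\overline T$ is not sincere at some simple $S$ in the support of $T_i$, is handled similarly using Auslander-Reiten theory on the subcategory avoiding $S$.

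For case (c), I argue by contradiction. Assume $\tau T_i \in \Sub \tau \overline T$, so there is a monomorphism $g : \tau T_i \hookrightarrow (\tau \overline T)^k$. The Auslander-Reiten formulas, together with the rigidity $\Ext^1(\overline T, T_i) = 0$ of the tilting module $T$, produce tight constraints: in particular
\[ \Hom(T_i, \tau \overline T) \cong D\Ext^1(\overline T, T_i) = 0, \]
and by the stable equivalence induced by $\tau$, $\Hom(\tau T_i, \tau \overline T)$ is governed by $\Hom(T_i, \overline T)$ modulo appropriate corrections. Combining $g$ with the short exact sequence from case (c) and passing to the derived category, I then derive a contradiction with either the rigidity of $T$ or the universal property of the minimal right $\add \overline T$-approximation $B \to T_i$.

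The main obstacle is this contradiction argument in case (c): Auslander-Reiten duality is a vector-space isomorphism and does not straightforwardly respect monomorphisms, so a careful analysis of the interaction between the hypothetical mono $g$ and the approximation sequence is required. An additional subtlety is the appearance of projective summands of $B$ or $T_i'$, which can make the naive application of $\tau$ to the exact sequence fail at the level of modules; this is best handled by working in the derived category, where the corresponding triangle survives intact, or by splitting off projective components before translating.
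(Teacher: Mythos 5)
Your reduction to the contrapositive $T_i\in\Fac\overline T\iff\tau T_i\notin\Sub\tau\overline T$ is correct, and the projective case is fine. Case~(b) is also essentially right: for a hereditary algebra, $\tau=D\Ext^1(-,H)$ is left exact on short exact sequences, so $0\to T_i\to B\to T'_i\to 0$ gives a monomorphism $\tau T_i\hookrightarrow\tau B\in\add\tau\overline T$ without extra fuss, even if $B$ has projective summands. But the two places you flag as incomplete are genuine gaps, and the route you sketch for case~(c) is not the right one.

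In case~(c) you want to rule out a monomorphism $g\colon\tau T_i\hookrightarrow(\tau\overline T)^k$, and you try to get a contradiction from the AR-formula computation $\Hom(T_i,\tau\overline T)\cong D\Ext^1(\overline T,T_i)=0$. That identity constrains maps out of $T_i$, not out of $\tau T_i$, so it says nothing about $g$; the stable-Hom bookkeeping you allude to does not turn it into a statement about $\Hom(\tau T_i,\tau\overline T)$, which is typically nonzero. The missing idea is much more direct: apply $\tau$ to the exchange sequence $0\to T'_i\to B\to T_i\to 0$ itself. When this stays short exact you obtain an epimorphism $\tau B\twoheadrightarrow\tau T_i$ with $\tau B\in\add\tau\overline T$, so $\tau T_i\in\Fac\tau\overline T$, i.e.\ $\tau T_i$ lands in case~(c) of Lemma~\ref{trich} for the tilting module $\tau T'_i\oplus\tau\overline T$. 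Since that trichotomy is exclusive, $\tau T_i$ is not in case~(b), which is exactly $\tau T_i\notin\Sub\tau\overline T$. No contradiction with rigidity or approximation properties is needed. When $T'_i$ is projective, $\tau T_i$ has no complement in $\mod H$, so $\tau\overline T$ is not sincere and one is in case~(a), again not case~(b).

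The piece of machinery that makes all of this run smoothly, and that your proposal lacks, is the embedding of $\mod H$ into the cluster category. There $\tau$ is a genuine autoequivalence, the complement $T'_i$ of $\overline T$ always exists (as a module or a shifted projective), and the complement of $\tau\overline T$ is automatically $\tau T'_i$ because $\tau$ commutes with cluster mutation. This is what lets one transfer the trichotomy from $T$ to $\tau T$ and what disposes of the projective/injective edge cases you worry about in both case~(b) and case~(c). The same mechanism also supplies a real argument for your case~(a), which you currently only gesture at: if $\overline T$ is not sincere, the cluster-theoretic complement of $T_i$ is $P[1]$, so $\tau T'_i=\nu P$ is an injective module, and the exchange sequence for $\tau T_i$ must have $\tau T_i$ on the left (an injective indecomposable cannot be a proper submodule of something in $\add\tau\overline T$ without splitting off), giving the required monomorphism $\tau T_i\hookrightarrow\add\tau\overline T$.
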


\begin{proof} 
If $T_i$ is projective, then it must be a summand of the 
minimal generator for $\mathcal T$,
and then $\tau T_i$ is zero, so (by convention) 
it is not a summand of the minimal generator
for $\mathcal F$.  
We may therefore assume that $T_i$ is not projective.  

For the rest of the proof, we embed $\mod H$ into the corresponding
cluster category.  Note that $\tau$ is an autoequivalence on the 
cluster category.

Let $T'_i$ be the result of mutating $T$ at $T_i$ in
the cluster category.  
Since $\tau$ is an autoequivalence, the effect of mutating $\tau T$ at $\tau T_i$ is to replace $\tau T_i$ by $\tau T_i'$.  Write $\overline T$ for 
$\bigoplus_{j\ne i} T_j$.  

Suppose now that $T_i$ is a summand of the minimal generator
for $\mathcal T$.  
Then
there is no epimorphism from $\add \overline T$ to $T_i$,
so either 
there is a short exact
sequence in the module category
$$0\rightarrow T_i \rightarrow B \rightarrow T_i'\rightarrow 0,$$
where $B$ is in $\add \overline T$, or else
$T_i'$ is a shifted projective.  

In the former case, 
applying $\tau$ to the above sequence shows that $\tau T_i$ is not a
summand of the minimal cogenerator of $\mathcal F$, since 
$\tau T_i$ injects into $\tau B\in \add \tau \overline T$.  
In the latter case, $\tau T_i'$ is injective, so the exchange sequence
in $\mod H$ again has the same form ($\tau T_i$ is on the left, and 
therefore injects into an object of $\add \tau \overline T$, so is
not a summand of the minimal cogenerator of $\mathcal F$).  

Next suppose that $T_i$ is not a summand of the minimal generator for 
$\mathcal T$.  So there is an epimorphism from some $B $ in $\add \overline T$
to $T_i$, and thus we have a short exact sequence in $\mod H$ of
the form
$$0\rightarrow T_i' \rightarrow B \rightarrow T_i \rightarrow 0.$$
Therefore either $\tau$ applied to the above sequence in $\mod H$ is still a
short exact sequence, or else $T_i'$ is projective, and hence 
$\tau T_i'$ is a shifted projective.  In the first case, the exchange sequence
for $\tau T_i$ has $\tau T_i$ on the right; in particular, $\tau T_i$ does
not admit a monomorphism to any $B'$ in  $\add \tau \overline T$.  Thus
$\tau T_i$ is a summand of the minimal cogenerator of $\mathcal F$.  In the
second case, $\tau T_i$ has no complement in $\mod H$, so $\tau \overline T$
is not sincere and thus $\tau T_i$ is again a summand of the minimal cogenerator
for $\mathcal F$.  
\end{proof}

A subcategory of $\mod H$ is called \emph{exact abelian} if it is abelian
with respect to the exact structure inherited from $\mod H$.  
If $(E_1,\dots,E_r)$ is an exceptional sequence in $\mod H$, it naturally
determines an exact abelian and extension-closed subcategory of $\mod
H$, the smallest such subcategory of $\mod H$ containing $E_1,\dots,E_r$.  
This
subcategory is a module category for a hereditary algebra $H'$ with
$r$ simples \cite{r}.  If $(E_1,\dots,E_n)$ is a complete exceptional 
sequence, then the minimal exact abelian and extension-closed subcategory 
of $\mod H$ containing $E_1,\dots,E_r$ can also be described as the
full subcategory of $\mod H$ consisting of all $Z$ such that 
$\Hom(E_i,Z)=0=\Ext^1(E_i,Z)=0$ for all $r+1\leq i \leq n$.  

\begin{lemma}\label{ttf}
Let $(E_1,\dots,E_n)$ be a complete exceptional sequence in $\mod H$.  
Let $\mathcal B$ be the exact abelian extension-closed 
subcategory generated by 
$E_1,\dots,E_r$, with $0< r <n$, 
and let $\mathcal C$ be the exact abelian 
extension-closed subcategory 
generated by $E_{r+1},\dots,E_{n}$.  Let $\mathcal T=\Fac \mathcal C$, and
$\mathcal G=\Sub \mathcal B$.  Then $(\mathcal T,\mathcal G)$ forms a 
torsion pair.  
\end{lemma}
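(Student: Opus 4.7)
The plan is to verify the two axioms defining a torsion pair for $(\T, \mathcal G) = (\Fac \C, \Sub \mathcal B)$: first, $\Hom(\T, \mathcal G) = 0$; second, that every $M \in \mod H$ admits a short exact sequence $0 \to tM \to M \to fM \to 0$ with $tM \in \T$ and $fM \in \mathcal G$.

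The key preliminary fact is the double orthogonality $\Hom(\C, \mathcal B) = 0 = \Ext^1(\C, \mathcal B)$. On the generators this is exactly the exceptional sequence property $\Hom(E_j, E_i) = 0 = \Ext^1(E_j, E_i)$ for $j > r \geq i$; one propagates to arbitrary pairs $(C,B) \in \C \times \mathcal B$ by two nested inductions on the construction of objects from their generators via kernels, cokernels, and extensions, using long exact sequences of $\Hom$ together with the hereditary vanishing $\Ext^{\geq 2} = 0$. The first axiom is then immediate: if $C \twoheadrightarrow M$ in $\Fac \C$ and $N \hookrightarrow B$ in $\Sub \mathcal B$, then $\Hom(M,N) \hookrightarrow \Hom(M,B) \hookrightarrow \Hom(C,B) = 0$.

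For the second axiom, let $tM$ be the trace of $\C$ in $M$, i.e.\ the sum of images of all morphisms $C \to M$ with $C \in \C$. Then $tM \in \Fac \C = \T$ by construction, and $fM := M/tM$ satisfies $\Hom(\C, fM) = 0$ by standard properties of the trace. The crucial remaining step is to show that any $N$ with $\Hom(\C, N) = 0$ lies in $\Sub \mathcal B$. For this I would invoke the Schofield/Geigle--Lenzing theorem on perpendicular categories of exceptional sequences: the right perpendicular $\C^\perp = \{X : \Hom(\C, X) = \Ext^1(\C, X) = 0\}$ is an exact abelian, extension-closed subcategory of $\mod H$ equivalent to modules over a hereditary algebra of rank $r$, and the inclusion $\C^\perp \hookrightarrow \mod H$ admits a left adjoint (reflection functor) $L$. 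Combined with $\mathcal B \subseteq \C^\perp$ from the orthogonality above, the rank count forces $\mathcal B = \C^\perp$. For $N$ with $\Hom(\C, N) = 0$ the unit $\eta_N : N \to L(N)$ is a monomorphism --- the reflection only has to universally extend to kill $\Ext^1(\C, N)$, without having to quotient by any trace --- so $N \hookrightarrow L(N) \in \C^\perp = \mathcal B$, exhibiting $N \in \Sub \mathcal B$.

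The main obstacle is this last paragraph: both the identification $\mathcal B = \C^\perp$ and the monomorphism property of $\eta_N$ rely on perpendicular-category machinery external to the paper's self-contained development. The rank count is essential (without it one only has the inclusion $\mathcal B \subseteq \C^\perp$, which is insufficient), and the monomorphism property of $\eta_N$ requires the reflection into $\C^\perp$ to decompose as ``quotient by $\C$-trace'' followed by ``universal $\C$-extension,'' so that the first step is trivial whenever the $\C$-trace vanishes, as it does for our $N$.
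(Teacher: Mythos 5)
Your approach (reflect $N$ into $\C^\perp = \mathcal B$ via the left adjoint $L$ and show the unit $\eta_N$ is mono) is a genuinely different route from the paper's, which splits into the cases where $\T$ is or is not generated by a tilting module and uses the mutation/complement machinery from Lemma~\ref{altgen}. Your plan is attractive because it is uniform and sidesteps the case split. However, the key step is not adequately justified, and the stated justification is actually incorrect in general.

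The problem is the asserted two-step description of $L$ as ``quotient by $\C$-trace, then universal $\C$-extension.'' That description is valid for the perpendicular of a \emph{single} exceptional object $E$ (where $\End(E)$ is a division ring, so a minimal universal extension $0 \to N \to N' \to E^d \to 0$ simultaneously kills $\Ext^1(E,-)$ and preserves $\Hom(E,N')=0$). But $\C$ is generated by several exceptionals, and a single ``universal $\C$-extension'' by the $\Ext$-projective generator $P$ of $\C$ need not land in $P^\perp$: since $\End(P)$ is not a division ring, the map $\Hom(P,P_0)\to \Ext^1(P,N)$ from a universal extension sequence $0 \to N \to N' \to P_0 \to 0$ can be surjective without being injective, so one can have $\Hom(P,N')\neq 0$ even when $\Hom(P,N)=0$. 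Thus the quotient step is \emph{not} automatically trivial, and your asserted decomposition of $L$ does not hold as stated. Iterating universal extensions over the $E_i$ ($i>r$) in the naive way does not resolve this either, because $\Hom(E_i,E_j)$ for $i<j$ among the generators need not vanish, so each extension step can reintroduce nonzero $\Hom$ from an earlier generator.

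What does salvage the strategy is a more careful iteration: replace the generators $E_{r+1},\dots,E_n$ by the \emph{simple} objects $S'_{r+1},\dots,S'_n$ of the module category $\C$, ordered as an exceptional sequence (so $\Ext^1(S'_j,S'_i)=0$ for $j>i$), and then do universal extensions against $S'_n,S'_{n-1},\dots,S'_{r+1}$ in \emph{reverse} exceptional order. At each stage Schur's lemma gives $\Hom(S'_i,S'_j)=0$ for $i\neq j$, which keeps the accumulated $\Hom$-vanishing intact, and the chosen ordering keeps the already-achieved $\Ext^1$-vanishing intact; each step is then a genuine universal extension and hence a monomorphism. This is a real additional idea that your proposal does not supply. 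You flag the issue as ``the main obstacle,'' which is honest, but the decomposition of $L$ you rely on is not a standard citable fact and is false as stated, so the proof as written has a gap at exactly the step you identified as crucial. The identification $\mathcal B = \C^\perp$, by contrast, is fine and is in fact stated by the paper just before the lemma, so the rank-count detour is unnecessary.
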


\begin{proof} 
Since $\mathcal C$ is closed under extensions, it is straightforward to
see that $\mathcal T$ is also closed under extensions, and hence that it
is a torsion class.    
Let $\mathcal F$ be the 
torsion-free class corresponding to $\mathcal T$.  Clearly
$\mathcal G$ is a full subcategory of $\mathcal F$.  
Suppose first that $\mathcal T$
is generated by a tilting object $T=\bigoplus T_i$, so we can apply
Lemma \ref{altgen}.  Let $P$ be the minimal generator of $\mathcal T$.
This consists of the direct sum of the indecomposable 
$\Ext$-projectives of $\mathcal C$. (Note that 
$\mathcal C$ is again a module category.) 
Let $T_i$ be a summand of $T$ which is not
a summand of the minimal generator of $\mathcal T$.  
Since $\tau T_i$ is in $\mathcal F$, we know
that $\Hom(P,\tau T_i)=0$.  Let $P_j$ be an indecomposable 
summand of $P$.  We want to show
that $\Hom(T_i,P_j)=0$.  
Morphisms between indecomposable summands of a tilting object
are epimorphisms or monomorphisms \cite{hr}.  
Since $P_j$ is by assumption a summand of the
minimal generator of $\mathcal T$, it cannot admit an epimorphism from $T_i$.
Since $T_i$ admits an epimorphism from $\overline T$, it cannot also admit
a monomorphism into $P_j$ (by Lemma \ref{trich}).  
Therefore, $\Hom(T_i,P)=0$, and hence
$\Ext^1(P,\tau T_i)\isom D\underline{\Hom}(T_i,P)=0$.
Using the remarks before the statement of the lemma,
  we conclude that $\tau T_i$ lies in $\mathcal B$.  By Lemma \ref{altgen} we conclude
that all the indecomposable summands of the minimal cogenerator of 
$\mathcal F$ lie in $\mathcal B$, and therefore in $\mathcal G$.   
So
$\mathcal F=\mathcal G$, as desired.  

Suppose now that $\mathcal T$ is not generated by a tilting module.  
It is still generated by the direct sum of the indecomposable 
non-isomorphic $\Ext$-projectives of 
$\mathcal C$, which we denote by $T$.  
Let $I_1,\dots,I_s$ be the indecomposable injectives such that 
$\Hom(T,I_i)=0$.  These are objects of $\mathcal B$.  Suitably
ordered, $(I_1,\dots,I_s)$ form an exceptional sequence in $\mathcal 
B$; we can therefore extend this sequence to a complete exceptional sequence
in $\mathcal B$, which we denote by
$(I_1,\dots,I_s,F_1,\dots,F_{r-s})$ .  Note that this sequence can
be further extended to a complete 
exceptional sequence in $\mod H$ by appending
$(E_{r+1},\dots,E_n)$.  

Consider the category $\mathcal M$ with objects
$\{M\mid \Hom (M,I_i)=0$ for $1\leq i \leq s\}$.  This is a module category
for some hereditary algebra $H'$ with $n-s$ simples.  $\mathcal T$ is
a torsion class for $\mod H'$, and $T$ is tilting in $\mod H'$.  We can
therefore apply the previous case to conclude that 
the torsion-free class in $\mod H'$ associated to $\mathcal T$ is 
cogenerated by $\mathcal C'$, the smallest exact abelian extension-closed 
subcategory of $\mod H'$
containing $F_1,\dots,F_{r-s}$.   Now if $Z$ is any object in 
$\mod H$, we want to show that there is an exact sequence
$$ 0 \rightarrow K \rightarrow Z \rightarrow Z/K\rightarrow 0$$
with $Z/K$ in $\mathcal G$ and $K$ in $\mathcal T$.  If we can do this,
then that shows that $\mathcal G$ is ``big enough'', that is to say,
it coincides with $\mathcal F$.   

To do this, let $N$ be the maximal quotient of $Z$ which is a subobject
of $\add \oplus_{i=1}^s  I_i$, and let the kernel be $Z'$.  So $Z'$ admits
no non-zero morphisms to $\oplus_{i=1}^s I_i$; in other words, $Z'$ is in 
$\mod H'$.  So $Z'$ has a maximal torsion submodule $K$, and $Z'/K$ is
in the torsion-free class associated to $\mathcal T$ in $\mod H'$.  
It follows that $Z/K$ is in $\mathcal G$, and we are done.
\end{proof}

\subsection{Riedtmann's combinatorial configurations}

Define the autoequivalence $F=[-2]\tau^{-1}$ of $\D$.

A collection $\mathcal I$ 
of indecomposable objects in $\D$ is called a (Riedtmann)
combinatorial configuration if it satisfies the following two
properties:
\begin{itemize}
\item For $X$ and $Y$ non-isomorphic objects in $\mathcal I$, we have 
$\Hom(X,Y)=0$.  
\item For any nonzero $Z$ in $\D$, there is some $X\in \mathcal I$ such that
$\Hom(X,Z)\ne 0$.  
\end{itemize}
Note that Riedtmann only considers combinatorial configurations for path
algebras of type ADE, but the above definition does not require that
restriction.  

A combinatorial configuration is called {\it periodic} if it satisfies
the additional property that (in our notation) for any $X\in \mathcal I$,
we have $F^i(X)\in \mathcal I$ for all $i$.  
Riedtmann showed that if $H$ is a
path algebra of type $A$ or $D$, then any combinatorial configuration is 
periodic \cite{Rie1,Rie2}.  

\begin{theorem}\label{eightfour} If $T$ is a $\Homneg$-configuration contained in 
$\donezerominus$, then the set of indecomposable summands of $F^i(T)$ for all $i$ 
is a periodic combinatorial configuration
in the sense of Riedtmann. \end{theorem}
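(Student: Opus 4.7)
The plan is to verify that $\mathcal I := \{F^i X_j : i \in \mathbb Z,\ 1 \leq j \leq n\}$, where $T = X_1 \oplus \cdots \oplus X_n$, is both a Riedtmann combinatorial configuration and $F$-periodic. Periodicity is immediate from the definition of $\mathcal I$, so the two substantive tasks are (i) showing $\Hom(X, Y) = 0$ for non-isomorphic $X, Y \in \mathcal I$, and (ii) showing that every nonzero $Z \in \D$ admits a nonzero morphism from some member of $\mathcal I$.

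For (i), by $F$-equivariance I would reduce to checking $\Hom(F^k X_i, X_j) = 0$ for all $k \geq 0$ with $(k, i) \neq (0, j)$. The case $k = 0$ is exactly (H2). Writing $F = \nu^{-1}[-1]$, so that $\nu F = [-1]$, a single application of Serre duality yields
\[
\Hom(F^k X_i, X_j) \ \cong\ D\Hom(X_j,\, F^{k-1}X_i[-1]).
\]
For $k = 1$ the right-hand side is $D\Ext^{-1}(X_j, X_i)$, which vanishes by (H3). For $k \geq 2$, since each application of $\nu^{-1}$ either preserves the degree (at an injective) or lowers it by one, and $X_i$ lies in $\mod H[0] \cup \mod H[1]$, the object $F^{k-1}X_i[-1]$ sits in $\mod H[c]$ with $c \leq -1$. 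As $X_j$ lies in $\mod H[0]$ or $\mod H[1]$, heredity of $H$ forces this Hom to vanish.

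For (ii), the key input is that $T$, being the sum of a complete exceptional sequence by Lemma \ref{hfour}, is a generator of $\D$, as discussed before Lemma \ref{silting_generates}. Given a nonzero indecomposable $Z = N[s_0]$, the idea is first to apply a suitable power of $F^{\pm 1}$ to bring $F^{-i}Z$ into low degree $d \in \{0, 1, 2\}$; each application of $F^{-1}$ shifts the degree by $+2$ generically, or by $+1$ when a projective appears in the relevant $\tau$-orbit (and symmetrically for $F$). Letting $M$ be the underlying module of $F^{-i}Z$, the generator property applied to $M$ produces some $j$ with $\Hom_H(\overline{X_j}, M) \neq 0$ or $\Ext^1_H(\overline{X_j}, M) \neq 0$. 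Matching the achieved $d$ with $a_j \in \{0, 1\}$ so that $d - a_j \in \{0, 1\}$ then gives $\Hom(X_j, F^{-i}Z) \neq 0$, equivalently $\Hom(F^i X_j, Z) \neq 0$.

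The main obstacle I expect is the matching step in the generation argument. When $Z$ is supported on a regular indecomposable in a non-Dynkin $H$, the $F$-orbit of $Z$ never crosses the projective/injective boundary, so the parity of the reachable degree $d$ is rigidly pinned by $s_0$. One must then show, for every module in the $\tau$-orbit of $N$, that some $j$ whose $a_j$ is of matching parity produces a nonzero morphism or extension; this is the delicate piece that seemingly requires invoking (H4) and the exceptional-sequence structure of $T$ more deeply. For $H$ of Dynkin type the issue evaporates because every $\tau$-orbit eventually reaches a projective or injective, restoring freedom in the parity of $d$.
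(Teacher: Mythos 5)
Your part (i) is essentially the paper's own argument, only with the degree bookkeeping made explicit: (H2) handles $k=0$, Serre duality plus (H3) handles $k=1$, and for $|k|\geq 2$ the target is too far out of degree range. Nothing to object to there.

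Your part (ii) has the gap you yourself flag, and it is a genuine one. What the argument actually needs is a dichotomy of the following form: every indecomposable $H$-module $M$ either receives a nonzero morphism from some degree-zero summand $E_j$ of $T$, or admits a nonzero morphism to the underlying module $E_i$ of some degree-one summand $E_i[1]$ of $T$ (which by Serre duality gives a nonzero morphism $\nu^{-1}E_i = F(E_i[1]) \to M$). The parity-matching strategy cannot manufacture this dichotomy: as you note, for $M$ regular over non-Dynkin $H$ the $F$-orbit of $M[s_0]$ visits each degree only with a $\tau$-shift rigidly tied to it, so you cannot tune the degree and the $\tau$-translate independently, and the generator property of $T$ only gives $\Hom(X_j, M[k]) \neq 0$ for some uncontrolled $k$, not for the $k$ you need.

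The paper supplies exactly the missing dichotomy via Lemma~\ref{ttf}: if $(E_1,\dots,E_n)$ is a complete exceptional sequence and $\mathcal B$, $\mathcal C$ are the exact abelian extension-closed subcategories generated by $(E_1,\dots,E_r)$ and $(E_{r+1},\dots,E_n)$ respectively, then $(\Fac\mathcal C, \Sub\mathcal B)$ is a torsion pair in $\module H$. Taking $E_{r+1},\dots,E_n$ to be the degree-zero summands and $E_1,\dots,E_r$ the underlying modules of the degree-one summands (ordered as in Lemma~\ref{hfour}) gives precisely the needed statement. This lemma carries the real weight of the generation half and is proved with nontrivial tilting-theoretic input (Lemma~\ref{altgen}). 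Note also that even with Lemma~\ref{ttf} available, the paper must treat objects $X$ in degree one separately: it constructs the auxiliary $\Homneg$-configuration $\bigl(\bigoplus_{i\leq r}E_i[1]\bigr)\oplus\bigl(\bigoplus_{i>r}F^{-1}E_i\bigr)$, checks (H1)--(H4) for it, and then reapplies the degree-zero argument to $X[-1]$. Your sketch does not anticipate this extra step either.
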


\begin{proof}
To verify the $\Hom$-vanishing condition in the definition of 
a combinatorial configuration, it suffices to verify, for 
any non-isomorphic indecomposable summands $A,B$ of $T$, that 
$\Hom(A,F^i(B))=0$.  It is clear that $\Hom(A,F^i(B))$ is zero unless $i=0$ or
$i=-1$.  If $i=0$, the vanishing follows directly from the definition
of a $\Homneg$-configuration.  For $i=-1$, observe that
$\Hom(A,F^{-1} B)\isom D \Ext^{-1}(B,A)=0$.  

Let $\hat T=\bigoplus_{i} F^i(T)$.  
Now we consider the property that for each $X$ in $\D$, we have that 
$\Hom(\hat T,X)\ne 0$.  
We may assume that $X$ is indecomposable.
We can clearly 
assume that $X\in \donezerominus$.   

Let $E_1[1],\dots,E_r[1]$ be the indecomposable summands of $T$ in degree 1, and let
$E_{r+1},\dots,E_n$ be the indecomposable summands of $T$ in degree 0, ordered so
that $(E_1,\dots,E_n)$ forms an exceptional sequence in $\mod H$. 

Assume first
that $X$ is in degree 0.
Let $\mathcal B$ be the smallest exact abelian extension-closed
subcategory containing 
$E_1,\dots,E_r$.  This is the category of objects of $\mod H$ filtered by 
$\{E_1,\dots,E_r\}$. 
Similarly, let $\mathcal C$ be the smallest exact abelian extension-closed
subcategory containing 
$E_{r+1},\dots,E_n$.  

Let $\mathcal T=\Fac \mathcal C$, and $\mathcal F=\Sub \mathcal B$.  By
Lemma \ref{ttf}, $(\mathcal T,\mathcal F)$ is a torsion pair.  

If $X$ has non-zero torsion, then we have shown that $X$ admits a 
non-zero morphism from some object in $\mathcal T$, and therefore from some
object of $\mathcal C$, so $X$ admits a non-zero morphism
from some 
$E_i$ with $r+1\leq i \leq n$.  Since this $E_i$ is a summand of $\hat T$, we
are done with this case.  

Now suppose that $X$ has no torsion, which is to say, 
it is torsion-free.  
$X$ therefore admits a monomorphism into some object of 
$\mathcal B$, 
and 
thus a non-zero morphism to some $E_i$ with $1\leq i \leq r$.  Hence there is 
a non-zero morphism
from $\nu^{-1}(E_i)$ to $X$.  
But $\nu^{-1}(E_i)=F(E_i[1])$, which is a summand of 
$\hat T$, and we are done.  

Now consider the case that $X$ lies in degree 1.
Let $Z=(\bigoplus_{i=1}^r E_i[1]) \oplus 
(\bigoplus_{i=r+1}^n F^{-1}E_i)$.  We claim that $Z$ is a $\Homneg$-configuration contained in $\D_{(\leq 2)-}^{(\geq 1)}$ (by which we mean
$\D_{(\leq 2)}^{(\geq 1)}$ with $DH[2]$ removed).  
(H1) is clear.  For (H2), the nontrivial requirement is to show 
$\Hom(E_i[1],F^{-1}E_j)=0$
with $i\leq r$ and $j>r$.  Now $\Hom(E_i[1],F^{-1}E_j)\simeq 
D\Ext^{-1}(E_j,E_i[1])=0$.  For (H3), the nontrivial requirement is to
show that $\Ext^{-1}(E_i[1],F^{-1}E_j)=0$ for $i\leq r$ and $j>r$, 
and we see that 
$\Ext^{-1}(E_i[1],F^{-1}E_j)\simeq \Hom(E_j,E_i[1])=0$.  
For (H4), observe that, by Lemma \ref{mutationlemma}(c), $(\mu_{1}\dots\mu_{n-1})^{n-r}$ transforms 
$(E_1[1],\dots,E_r[1],E_{r+1},\dots,E_n)$ to $(\nu E_{r+1},\dots,
\nu E_n,E_1[1],\dots,E_r[1])$.  Up to some shifts of degrees, the terms
in this exceptional sequence coincide with the summands of $Z$, which
implies (H4).  

Now apply the argument from the case that $X$ is in degree zero to $X[-1]$ and 
the $\Homneg$-configuration $Z[-1]$.  
\end{proof}

\begin{theorem}\label{eightfive} If $\mathcal I$ is a periodic combinatorial configuration, 
and $H$ is of finite type, then the objects of $\mathcal I$ lying
inside $\donezerominus$ form a $\Homneg$-configuration.  \end{theorem}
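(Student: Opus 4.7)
The plan is to verify the four axioms (H1)--(H4) for $T := \mathcal{I} \cap \donezerominus$. Condition (H2) is immediate from the Hom-vanishing axiom of a combinatorial configuration. For (H3), I would first observe, using that $H$ is hereditary together with a degree comparison, that the only case in which $\Ext^{s}(X,Y)$ with $X,Y\in T$ and $s<0$ can fail to vanish is $s=-1$ with $X=A\in\mod H$ non-projective and $Y=B[1]$, reducing the target to $\Hom(A,B)$. Setting $FY=\tau^{-1}(B)[-1]$, which lies in $\mathcal{I}$ by $F$-periodicity, I would uniformly check via Serre duality that $\Hom(FY,X)\simeq D\Hom(A,B)$. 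Since $FY\ne X$ (distinguished either by degree in $\D$, when $B$ is non-injective, or by projectivity in $\mod H$, when $B=I_{j}$ and $FY=P_{j}$), the pairwise Hom-vanishing of $\mathcal{I}$ yields $\Hom(A,B)=0$.

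For (H1), I would show that $\donezerominus$ is a fundamental domain for the $F$-action on indecomposables of $\D$. Using the explicit formulas $F(M[d])=\tau^{-1}(M)[d-2]$ when $M$ is non-injective and $F(I[d])=P[d-1]$ when $I$ is injective (with $P$ the corresponding projective), the degrees along any $F$-orbit form a strictly decreasing sequence with consecutive jumps of $1$ or $2$. A short case analysis then shows each orbit meets $\donezerominus$ in exactly one object: the delicate case is that an orbit hits both degrees $0$ and $1$ precisely when the degree-$0$ representative is projective (and hence excluded), leaving the degree-$1$ representative as the unique one. Combined with the classical fact from Riedtmann's theory \cite{Rie1,Rie2} that a periodic combinatorial configuration in the Dynkin case has exactly $n$ $F$-orbits, this gives $|T|=n$.

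The main obstacle is (H4). A degree-monotonicity argument along a hypothetical $\Ext^{1}$-cycle in $T$ would force all its members to lie in a single degree; writing $A_{1},\dots,A_{r}\in\mod H$ for the underlying modules, they are pairwise Hom-disjoint exceptional modules with $\Ext^{1}(A_{i},A_{i+1})\ne 0$ cyclically. The symmetrized Euler form therefore satisfies $([A_{i}],[A_{i+1}])\le -1$ for all $i$ modulo $r$, while $([A_{i}],[A_{i}])=2\dim_{k}\End(A_{i})>0$. Since $H$ is of finite representation type, this form is positive definite on $K_{0}(H)$, so the sub-Gram matrix of $[A_{1}],\dots,[A_{r}]$ must be positive semi-definite. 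Comparing with the Cartan matrix of an affine-type $\tilde A_{r-1}$ subsystem (itself positive semi-definite with a null vector having strictly positive entries), one concludes that either the sub-Gram matrix fails to be positive semi-definite (contradicting the ambient positive-definiteness) or it admits a positive integer relation $\sum_{i}c_{i}[A_{i}]=0$, contradicting positivity of the dimension vectors $[A_{i}]$. Either way no such cycle exists, completing (H4).
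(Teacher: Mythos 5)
Your verifications of (H2), (H3) and (H4) are sound.  For (H3) you carry out essentially the same Serre-duality computation as the paper, which records it as $\Ext^{-1}(A,B)\isom D\Hom(F(B),A)$ for summands $A,B$ of $T$; your reduction to a map $A\to B$ with $A$ in degree $0$ and $B[1]$ in degree $1$ is the same statement unwound.  For (H4) you actually supply an argument where the paper only asserts the conclusion in the Dynkin case: the degree-monotonicity reduction to a single degree, followed by positive-definiteness of the symmetrized Euler form and a comparison with the $\tilde A_{r-1}$ Gram matrix, is a correct way to rule out a cycle.  (For the non-simply-laced case you should add the remark that $\dim_k\Ext^1(A_i,A_{i+1})$ is a nonzero module over both $\End(A_i)$ and $\End(A_{i+1})$, so $([A_i],[A_{i+1}])\leq -\max(d_i,d_{i+1})$ with $d_i=\dim_k\End(A_i)$; this keeps the quantity $\sum_{i,j}([A_i],[A_j])$ nonpositive, which is what the argument needs.)

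The gap is in (H1).  Your observation that $\donezerominus$ is a fundamental domain for the $F$-action on indecomposables is correct, but the count $|T|=n$ cannot be obtained by citing an ``$n$-orbit'' fact from \cite{Rie1,Rie2}: those references treat types $A$ and $D$ only, whereas the theorem is for all Dynkin types; and the claim that a periodic combinatorial configuration has exactly $n$ $F$-orbits is, in substance, a consequence of the very statement being proved here (together with Theorem~\ref{eightfour}), so invoking it is circular.  The paper instead derives (H1) from the remaining axioms.  The covering axiom of a combinatorial configuration gives, for each $Z\in\module H[1]$, some $Y\in\mathcal I$ with $\Hom(Y,Z)\neq 0$; a degree comparison, using that $\Ext^1(P,-)$ vanishes on $\module H$ for $P$ projective, forces $Y\in\donezerominus$, hence $Y$ is a summand of $T$.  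Thus $T$ is a generator of $\D$, which forces at least $n$ nonisomorphic indecomposable summands.  Conversely, (H2)--(H4) and the proof of Lemma~\ref{hfour} order the summands of $T$ into an exceptional sequence, so there are at most $n$.  You should replace the orbit count with this two-sided argument.
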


\begin{proof} Let $X$ be the direct sum of the objects of $\mathcal I$
lying inside $\donezerominus$.
We show first of all that $X$ has at least $n$ non-isomorphic
indecomposable summands.  
By the definition of combinatorial configuration, any object in
$\mod H[1]$ admits a non-zero morphism from some object in 
$\mathcal I$. By degree considerations, such an object must
be a summand of $X$.  Thus $X$ generates $\D$, and therefore contains
at least $n$ non-isomorphic indecomposable summands. 

It follows from the definition of combinatorial configuration that 
$\Hom(X,X)$ has as basis the identity maps on the 
indecomposable summands of $X$.  If $A,B$ are two non-isomorphic
indecomposable summands of $X$, we have that $\Ext^{-1}(A,B)\isom
D\Hom(F(B),A)$ is zero, since $F(B)\in \mathcal I$.  Further, $\Ext^t(A,B)=0$
for $t<-1$ because $X$ is contained in $\donezerominus$.  
Since $H$ is of
Dynkin type, the summands of $X$ are exceptional and also
(H4) holds.  It follows that the summands of 
$X$ can be ordered into an exceptional sequence, which means that there
are at most $n$ of them, so there are exactly $n$, and $X$ is a
$\Homneg$-configuration.  
\end{proof}

Note that silting objects in $\D^{\geq 1}_{\leq 1}$ naturally correspond
to tilting $H$-modules.  
Combining Theorems \ref{eightfour} and \ref{eightfive} with Corollary 
\ref{positivebij}, we obtain the following corollary.

\begin{corollary} Assume that the hereditary algebra $H$ is of Dynkin type. 
Then there is a natural bijection between the tilting $H$-modules and
the periodic combinatorial configurations. \end{corollary}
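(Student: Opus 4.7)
The plan is to chain the bijections already established with the correspondence coming from Theorems \ref{eightfour} and \ref{eightfive}. Applying Corollary \ref{positivebij}(a) with $m=1$ gives a bijection, via $\mu_\rev$, between silting objects contained in $\D^{\geq 1}_{\leq 1}$ and $1$-$\Homneg$-configurations contained in $\donezerominus$. As noted in the paragraph preceding the corollary, silting objects in $\D^{\geq 1}_{\leq 1}$ are precisely the tilting $H$-modules. It therefore remains to set up a bijection between $1$-$\Homneg$-configurations in $\donezerominus$ and periodic combinatorial configurations.

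For this, I would define $\Phi(T) = \{F^i(X) : X \text{ an indecomposable summand of } T, \; i\in \mathbb Z\}$ for $T$ a $1$-$\Homneg$-configuration in $\donezerominus$; by Theorem \ref{eightfour} this is a periodic combinatorial configuration. Conversely, set $\Psi(\mathcal I)$ to be the direct sum of the indecomposable objects in $\mathcal I \cap \donezerominus$; by Theorem \ref{eightfive} this is a $1$-$\Homneg$-configuration in $\donezerominus$.

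The main task is to verify that $\Phi$ and $\Psi$ are mutually inverse. The direction $\Psi \circ \Phi = \mathrm{id}$ is essentially a degree check: any indecomposable summand $X$ of a $1$-$\Homneg$-configuration in $\donezerominus$ is either a non-projective module in degree $0$ or a module in degree $1$, so $F^i(X) = \tau^{-i}\overline X \,[\deg X - 2i]$ lies in degree $-2i$ or $1-2i$ respectively, which falls into $\{0,1\}$ only when $i=0$. Hence $\bigcup_i F^i(T) \cap \donezerominus$ recovers exactly the summands of $T$.

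The reverse identity $\Phi \circ \Psi = \mathrm{id}$ is where I expect the main obstacle to lie. Since $\mathcal I$ is $F$-stable, the inclusion $\Phi\Psi(\mathcal I) \subseteq \mathcal I$ is immediate, so the content is showing every $F$-orbit on indecomposables in $\D$ meets $\donezerominus$. In Dynkin type this can be extracted from the identity $\tau^h = [-2]$ (with $h$ the Coxeter number), which forces $F^h = [2-2h]$ and makes each $F$-orbit combinatorially transparent; within an orbit one can locate a unique representative whose degree lies in $\{0,1\}$ and which is not among the summands of $H$ when placed in degree $0$. Alternatively, since $H$ is Dynkin the set of periodic combinatorial configurations is finite, so the injectivity of $\Phi$ established in the previous paragraph combines with a cardinality count (both sides equal the number of tilting $H$-modules, by the already-known bijection with $\donezerominus$ configurations) to force the bijection without confronting the orbit analysis directly.
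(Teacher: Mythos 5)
Your high-level plan matches the paper exactly: chain Corollary \ref{positivebij}(a) at $m=1$ with Theorems \ref{eightfour} and \ref{eightfive}, using the identification of silting objects in $\D^{\geq 1}_{\leq 1}$ with tilting modules. You then go beyond the paper's (very terse) proof by attempting to verify that $\Phi$ and $\Psi$ are mutually inverse, which is the right instinct since the paper does not make this explicit.

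However, there are two problems in the verification. First, in the direction $\Psi\Phi=\mathrm{id}$, the identity $F^i(X)=\tau^{-i}\overline{X}[\deg X - 2i]$ is not valid in general: $\tau^{\pm 1}$ shifts degree when applied to (shifts of) projectives and injectives, so $\deg F^i(X)$ is not simply $\deg X - 2i$. (For example, if $X=I[1]$ with $I$ injective, then $F(X)=\nu^{-1}(I)\in\mod H$, which is in degree $0$, not degree $-1$; it is excluded from $\donezerominus$ only because it is projective.) This direction is more cleanly handled by counting: Theorem \ref{eightfive} says $\Psi\Phi(T)$ is a $\Homneg$-configuration, hence has exactly $n$ indecomposable summands, and the $n$ summands of $T$ are among them, forcing $\Psi\Phi(T)=T$. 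Second, and more seriously, the direction $\Phi\Psi=\mathrm{id}$: the cardinality argument you fall back on is circular. You know the number of $1$-$\Homneg$-configurations in $\donezerominus$ equals the number of tilting modules, but the number of periodic combinatorial configurations is exactly what is being determined; $\Psi\Phi=\mathrm{id}$ only gives an inequality, not equality. The orbit analysis via $\tau^h=[-2]$ could be pushed through but is not straightforward as stated, for the same degree-shifting reason as above. A cleaner argument: you already have $\Phi\Psi(\mathcal I)\subseteq\mathcal I$ by $F$-stability. If the inclusion were proper, pick $Y\in\mathcal I\setminus\Phi\Psi(\mathcal I)$; since $\Phi\Psi(\mathcal I)$ is a combinatorial configuration (Theorem \ref{eightfour}), there is $X\in\Phi\Psi(\mathcal I)$ with $\Hom(X,Y)\neq 0$, but $X,Y$ are non-isomorphic objects of $\mathcal I$, contradicting the first defining property of $\mathcal I$. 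This argument uses no Dynkin-specific orbit geometry and no cardinality input, and closes the gap cleanly.
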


A bijection between the tilting $H$-modules and the periodic 
combinatorial configurations was constructed in type ADE in \cite{BLR}.

\section{Torsion classes in the derived category}

Both silting objects and torsion classes play an important role in this paper. 
Here we point out that there is a close relationship between these
concepts.  

For an object $M$ in $\D$ we can define (as in \cite{kv})
the subcategory 
$$A(M)=\{X \in \D \mid \Ext^i(M,X)=0 \textrm { for } i \geq 1 \}.$$  
In this section we prove that $A(M)$
is preserved under application of $\mu_{\rev}$.

\begin{lemma} If $M$ is silting, $A(M)$ is a torsion class.  
\end{lemma}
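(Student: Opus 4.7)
The plan is to verify the two defining properties from Section 9 (via Section 6): that $A(M)$ is suspended (closed under $[1]$ and under extensions), and that the inclusion $A(M)\hookrightarrow \D$ admits a right adjoint.

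The suspension axioms are easy and I would dispatch them first. Closure under $[1]$: if $X\in A(M)$, then for every $i\geq 1$, $\Ext^i(M,X[1])=\Ext^{i+1}(M,X)=0$ because $i+1\geq 2\geq 1$. Closure under extensions: given a triangle $A\to B\to C\to A[1]$ with $A,C\in A(M)$, the long exact sequence obtained by applying $\Hom(M,-)$ yields, in each degree $i\geq 1$,
$$\Ext^i(M,A)\to \Ext^i(M,B)\to \Ext^i(M,C),$$
with outer terms zero by hypothesis, so $\Ext^i(M,B)=0$ as required.

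The main obstacle is producing the right adjoint. The plan is to construct, for each $X\in\D$, a triangle $a(X)\to X\to Y\to a(X)[1]$ with $a(X)\in A(M)$ and such that $\Hom_\D(Z,Y)=0$ for every $Z\in A(M)$, so that $a(X)\to X$ serves as the counit. The key finiteness input is that since $H$ is hereditary and $\D=D^b(H)$, the groups $\Ext^i(M,X)$ are finite dimensional and nonzero for only finitely many $i\in\mathbb Z$. I would then inductively kill the positive Ext groups of $X$ against $M$: letting $i_0$ be the largest $i\geq 1$ with $\Ext^{i_0}(M,X)\neq 0$, pick $M^a[-i_0]\xrightarrow{u} X$ corresponding to a basis of $\Hom(M,X[i_0])$, and form the triangle $M^a[-i_0]\to X\to X'\to M^a[-i_0+1]$. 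Applying $\Hom(M,-)$ and using that $\Ext^j(M,M)=0$ for $j\geq 1$ (as $M$ is silting) shows that $\Ext^{i_0}(M,X')=0$, while all strictly higher Ext groups remain zero. Iterating this procedure at most $i_0$ times produces $X_r\to X$ with $X_r\in A(M)$; the composite map is the candidate for $a(X)\to X$.

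To finish, I would verify the universal property: the cone of $a(X)\to X$ is built from finitely many shifts $M[j]$ with $j<0$, hence for any $Z\in A(M)$, applying $\Hom(Z,-)$ to the cone and using $\Hom(Z,M[j])$ vanishing in the relevant range would give $\Hom(Z,X)\xrightarrow{\sim}\Hom(Z,a(X))$. I expect the trickiest part to be this last step: ensuring that the inductive construction is functorial and that the morphism $a(X)\to X$ is indeed terminal among morphisms from $A(M)$ to $X$; this is essentially the aisle criterion of Keller--Vossieck \cite{kv}, and in fact once one shows $A(M)$ is suspended and every $X$ admits such an approximation triangle, their criterion delivers the right adjoint automatically.
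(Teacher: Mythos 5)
Your verification that $A(M)$ is suspended is fine, and the idea of constructing the right adjoint directly is a reasonable alternative to what the paper does. However, the construction you propose does not produce the object you need, and the universal-property check relies on a vanishing statement that is false for general silting objects.

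First, the direction. Your iterated triangles $M^a[-i_0]\to X\to X'\to M^a[-i_0+1]$ produce a chain of morphisms $X\to X'\to\cdots\to X_r$ \emph{out of} $X$, with $X_r\in A(M)$. The triangle you end up with is $C\to X\to X_r\to C[1]$, where $C$ is built (by iterated extensions) out of the objects $M^{a_j}[-i_j]$ with $-i_j\le -1$. This is not the torsion triangle for the aisle $A(M)$: for a torsion class $\T$ in the sense of Keller--Vossieck (and of this paper), the counit of the right adjoint goes $a(X)\to X$ with $a(X)\in\T$, i.e.\ the torsion part maps \emph{into} $X$, and the cone lies in $\T^\perp=\{Y:\Hom(\T,Y)=0\}$. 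In your decomposition the object lying in $A(M)$ sits on the wrong side. One can see concretely that $X_r$ is not the torsion part: take $H=kA_2$ ($1\to 2$), $M=H$, and $X=S_1[-1]$; then $a(X)=\tau^{\leq 0}X=0$, whereas your construction gives $X_r=S_2\neq 0$, with $X\to S_2$ the connecting map.

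Second, the universal property. You want $\Hom(Z,X)\cong\Hom(Z,a(X))$ for $Z\in A(M)$, and you propose to get this from ``$\Hom(Z,M[j])$ vanishing'' for $j<0$. But membership of $Z$ in $A(M)$ only controls $\Ext^i(M,Z)=\Hom(M,Z[i])$, not $\Hom(Z,M[j])$. Indeed $M[j]$ is typically \emph{not} in $A(M)^\perp$ for $j<0$, because taking $Z=M$ gives $\Hom(M,M[j])=\Ext^j(M,M)$, which can be nonzero when $j<0$. (For instance, with $H=kA_2$ and the silting object $M=S_1\oplus S_2[2]$ one has $\Ext^{-1}(M,M)\cong\Ext^1(S_1,S_2)\ne 0$; the relations $\Ext^{\geq 1}(M,M)=0$ say nothing about $\Ext^{<0}(M,M)$.) So the claimed vanishing simply fails, and the last step collapses.

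For comparison, the paper avoids these pitfalls by not trying to construct the adjoint from scratch: it invokes the result (from Assem--Souto Salorio--Trepode, and Keller--Vossieck in Dynkin type) that the smallest suspended subcategory $U(M)$ containing a partial silting object is already a torsion class, and then shows $A(M)=U(M)$ by a short argument using the torsion decomposition with respect to $U(M)$ together with the fact that a silting object is a generator. If you wanted to keep a hands-on construction, the right move is dual to yours: build $a(X)$ \emph{up} from below by approximations $M^{a}[j]\to(\text{cocone})$ with $j\geq 0$, so that $a(X)$ lands in $U(M)$ by construction; but even then you would only obtain $a(X)\in U(M)$, and you would still need the identification $A(M)=U(M)$ to conclude.
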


\begin{proof}
By \cite[Cor. 3.2]{ast} (see \cite{kv} in the Dynkin case) the smallest suspended subcategory $U(M)$ containing $M$ is a torsion class.
We claim that $A(M) = U(M)$. Since $A(M)$ is clearly suspended, we need only to show $A(M) \subset U(M)$.
Assume $X$ is in $A(M)$. Since $U(M)$ is a torsion class, there is (see \cite{ast, br})  
a triangle 
$$U \to X \to Z \to U[1]$$ 
in $\D$ with $U$ in $U(M)$ and with $Z$ in ${U(M)}^{\perp}$. Since $U[1]$ is in $U(M) \subseteq A(M)$, and $A(M)$ is suspended,
we also have that $Z$ is in $A(M)$. 

By Lemma \ref{silting_generates} we have that $M$ is a generator. Since $Z$ is in ${U(M)}^{\perp}$ and $M[i]$ is in $U(M)$ for 
$i \geq 0$, we have that $\Hom_{\D}(M, Z[i]) = 0$ for $i \leq 0$. On the other hand, since $Z$ is in $A(M)$ we have
by definition that $\Hom_{\D}(M, Z[i]) = 0$ for $i>0$. Hence $Z= 0$, and $X \simeq U$ is in $U(M)$.
\end{proof}

The following can be found in \cite{ast}.

\begin{proposition} If $A$ is a torsion class which is $A(Y)$ for some silting object $Y$, then
$Y$ can be recovered as the $\Ext$-projectives of $A$.  
\end{proposition}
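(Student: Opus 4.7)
The plan is to show the two inclusions: (i) every indecomposable summand of $Y$ is $\Ext$-projective in $A(Y)$, and (ii) every indecomposable $\Ext$-projective of $A(Y)$ is isomorphic to a summand of $Y$.  Direction (i) is immediate from the definitions.  If $Y_i$ is a summand of $Y$, then $\Ext^j(Y,Y_i)=0$ for $j\geq 1$ since $Y$ is silting, so $Y_i \in A(Y)$; and $\Ext^j(Y_i,X)=0$ for all $X\in A(Y)$ and $j\geq 1$ is just a special case of the condition cutting out $A(Y)$.

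For (ii), let $P$ be an indecomposable $\Ext$-projective of $A(Y)$.  Since $\Hom_\D(Y,P)$ is finite-dimensional, I choose a right $\add Y$-approximation $f\colon Y^s \to P$ by taking $f$ to evaluate an $\End_\D(Y)^{\op}$-generating family of morphisms $Y\to P$, and complete it to a triangle
$$K \to Y^s \xrightarrow{f} P \to K[1].$$
The key intermediate claim is that $K \in A(Y)$.  Applying $\Hom_\D(Y,-)$ to the triangle, the silting hypothesis gives $\Hom_\D(Y,Y^s[j])=0$ for $j\geq 1$, the assumption $P\in A(Y)$ gives $\Hom_\D(Y,P[j])=0$ for $j\geq 1$, and the approximation property of $f$ forces $\Hom_\D(Y,Y^s)\to \Hom_\D(Y,P)$ to be surjective; together, these imply $\Hom_\D(Y,K[j])=0$ for every $j\geq 1$.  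Now $\Ext$-projectivity of $P$ in $A(Y)$ gives $\Ext^1(P,K)=0$, so the connecting morphism $P\to K[1]$ vanishes, the triangle splits, and $Y^s \simeq P\oplus K$.  Hence $P \in \add Y$, and since $Y$ is basic, $P$ is actually a summand of $Y$.

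Combining (i) and (ii), and again using that $Y$ is basic (so its indecomposable summands are pairwise non-isomorphic), the direct sum of (isomorphism classes of) indecomposable $\Ext$-projectives of $A(Y)$ is precisely $Y$, which is the required recovery.

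The single nontrivial step is showing $K\in A(Y)$; everything else is either a definitional check or formal splitting. The one technical prerequisite is the existence of the right $\add Y$-approximation, which reduces to finite dimensionality of $\Hom_\D(Y,P)$; this is available because $Y$ and $P$ are bounded complexes of finite-dimensional modules over the finite-dimensional hereditary algebra $H$.
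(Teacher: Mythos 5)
Your proof is correct. The paper does not include its own argument for this proposition, but instead cites it from Assem--Souto Salorio--Trepode (\cite{ast}); your argument is the standard one that would appear there: take a right $\add Y$-approximation $f\colon Y^s\to P$, complete to a triangle, observe that the cocone $K$ lies in $A(Y)$ because $\Hom(Y,Y^s[j])=0$ and $\Hom(Y,P[j])=0$ for $j\geq 1$ (silting and $P\in A(Y)$) and because the approximation property kills the obstruction in degree $1$, and then invoke $\Ext$-projectivity of $P$ to split the triangle and conclude $P\in\add Y$. One small remark: in the degenerate case $\Hom_\D(Y,P)=0$ one gets $Y^s=0$, and the splitting $0\simeq P\oplus K$ forces $P=0$, contradicting indecomposability; so your argument implicitly shows that every indecomposable $\Ext$-projective of $A(Y)$ receives a nonzero map from $Y$, which is consistent (and indeed necessary, since $Y$ is a generator). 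All the required finiteness is available because $H$ is a finite-dimensional hereditary algebra, so $\Hom_\D(Y,P)$ is a finitely generated $\End_\D(Y)^{\op}$-module and the approximation exists.
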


From this we obtain the following direct consequence.

\begin{corollary} The map $Y \mapsto A(Y)$ is an injection from silting objects to
torsion classes. 
\end{corollary}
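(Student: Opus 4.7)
The plan is to read off this corollary as an immediate formal consequence of the preceding proposition. Suppose $Y_1$ and $Y_2$ are silting objects in $\D$ with $A(Y_1)=A(Y_2)$, and call this common subcategory $A$. By the lemma proved just above, $A$ is genuinely a torsion class, and by hypothesis it is of the form $A(Y)$ for a silting object (namely for $Y=Y_1$, and also for $Y=Y_2$). Therefore the proposition applies in both cases, and both $Y_1$ and $Y_2$ must coincide with the basic object whose indecomposable summands are the indecomposable $\Ext$-projectives of $A$. Since silting objects are taken to be basic by the definition given in Section~2, this forces $Y_1\simeq Y_2$, proving injectivity.

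The only step worth checking carefully is that the recovery of $Y$ from $A(Y)$ in the proposition really does produce a uniquely determined basic object: the $\Ext$-projectives of $A$ form an additive subcategory, and taking one representative from each isomorphism class of indecomposable $\Ext$-projectives yields a basic object that agrees with $Y$ up to isomorphism (again because $Y$ itself is basic). Thus no genuine obstacle arises; the content of the corollary is entirely absorbed into the statement of the proposition.
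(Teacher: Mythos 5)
Your argument is exactly the one the paper intends: the corollary is stated as a direct consequence of the preceding proposition, and your proof simply spells out why recovery of $Y$ as the $\Ext$-projectives of $A(Y)$ forces injectivity, noting that silting objects are basic. This matches the paper's approach, and your additional remark about uniqueness of the basic object built from indecomposable $\Ext$-projectives is the right point to check.
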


The following shows that the torsion class associated to an exceptional 
sequence is not affected by negative mutations.  

\begin{proposition}\label{preserve}
If $\mu_i$ is a negative mutation for $Y$, then
$A(Y)=A(\mu_i(Y))$.
\end{proposition}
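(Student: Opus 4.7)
The plan is to exploit the mutation triangle defining $Y_i^*$ together with the negativity of $\mu_i$ to show that, modulo the already-shared summand $Y_{i+1}$, the Ext-vanishing condition against $Y_i$ is equivalent to the Ext-vanishing condition against $Y_i^*$. Since the exceptional sequences $Y$ and $\mu_i(Y)$ differ only in that $Y_i$ is replaced by $Y_i^*$ (and $Y_{i+1}$ is common to both), it suffices to prove: for any $X \in \D$ with $\Ext^\ell(Y_{i+1},X)=0$ for all $\ell \geq 1$, one has $\Ext^k(Y_i,X)=0$ for all $k\geq 1$ if and only if $\Ext^k(Y_i^*,X)=0$ for all $k\geq 1$.

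The first step is to record the defining triangle of a negative mutation, namely
$$Y_{i+1}^r[j] \to Y_i^* \to Y_i \to Y_{i+1}^r[j+1]$$
with $j<0$. Apply $\Hom_{\D}(-,X)$ to obtain the long exact sequence
$$\cdots\to \Ext^{k-1}(Y_{i+1}^r[j],X)\to \Ext^k(Y_i,X)\to \Ext^k(Y_i^*,X)\to \Ext^k(Y_{i+1}^r[j],X)\to\cdots$$

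The second step is the Ext-shift computation: $\Ext^s(Y_{i+1}^r[j],X)\isom \Ext^{s-j}(Y_{i+1},X)^r$. Since $j\leq -1$, for $s=k-1\geq 0$ we have $s-j\geq 1-j\geq 1$, and for $s=k\geq 1$ we have $s-j\geq 2$. Hence, under the assumption $\Ext^{\ell}(Y_{i+1},X)=0$ for $\ell\geq 1$, both flanking terms of the long exact sequence vanish. This yields an isomorphism $\Ext^k(Y_i,X)\isom \Ext^k(Y_i^*,X)$ for every $k\geq 1$, which is exactly what was needed. Combined with the fact that the remaining indecomposable summands of $Y$ and $\mu_i(Y)$ coincide, this gives $A(Y)=A(\mu_i(Y))$.

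The argument is essentially a one-step long exact sequence chase, so there is no real obstacle; the only place where care is required is keeping track of the shift parameter $j$ and verifying that negativity ($j<0$) is exactly what forces the relevant $\Ext^{s-j}$ indices to lie in the strictly positive range where the hypothesis on $Y_{i+1}$ applies. Had $j$ been non-negative, one of the flanking terms could involve $\Ext^0(Y_{i+1},X)$ or a lower shift, which need not vanish, so the conclusion would genuinely fail; this is precisely why the statement is restricted to negative mutations.
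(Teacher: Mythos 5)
Your proof is correct and follows exactly the same approach as the paper: rotate the mutation triangle $Y_{i+1}^r[j]\to Y_i^*\to Y_i\to Y_{i+1}^r[j+1]$, apply $\Hom(-,X)$, and use the negativity $j<0$ to kill the flanking terms involving $Y_{i+1}$. The paper merely states that the result follows from this long exact sequence, whereas you spell out the shift computation; the content is the same.
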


\begin{proof} 
Consider an approximation triangle
$$Y_{i+1}^r[j] \rightarrow Y_i^{\ast} \rightarrow Y_i \rightarrow Y_{i+1}^r[j+1]
$$
with $j$ negative. 
The result follows from the long exact sequence obtained by applying $\Hom(\ ,X)$ to this triangle.
\end{proof}

Hence the correspondence described in Section \ref{s:silt_hom} preserves the torsion classes.

\begin{corollary} If $Y=(Y_1,\dots,Y_n)$ is silting, then 
$A(\mu_{\rev}(Y))=A(Y)$.  
\end{corollary}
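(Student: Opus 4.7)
The plan is to deduce this directly from Proposition \ref{preserve} combined with Lemma \ref{goodmut}. Writing $\mu_{\rev}$ in the factored form (\ref{order}),
\[
\mu_{\rev}=\mu_{n-1}(\mu_{n-2}\mu_{n-1})\cdots (\mu_1\cdots \mu_{n-1}),
\]
we obtain a finite chain of exceptional sequences
\[
Y=Y^{(0)} \;\longrightarrow\; Y^{(1)} \;\longrightarrow\; \cdots \;\longrightarrow\; Y^{(N)}=\mu_{\rev}(Y),
\]
each obtained from the previous one by a single mutation $\mu_{i_k}$. I would like to show that $A(Y^{(k)})=A(Y^{(k+1)})$ for every $k$, so that the equality $A(Y)=A(\mu_{\rev}(Y))$ follows by telescoping.

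For each step, I need to verify the hypothesis of Proposition \ref{preserve}, namely that $\mu_{i_k}$ is a negative mutation when applied to $Y^{(k)}$. This is precisely the content of Lemma \ref{goodmut}: since $Y$ is silting, applying $\mu_{\rev}$ to it in the prescribed order (\ref{order}) produces only negative mutations at every stage. (The proof of Lemma \ref{goodmut} uses Lemma \ref{mutationlemma}(e) to observe that after the first inner round $\mu_1\cdots\mu_{n-1}$, the intermediate exceptional sequence on the right-hand side is again silting within the relevant thick subcategory, so that the inductive argument can proceed.)

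Granting negativity at every step, Proposition \ref{preserve} gives $A(Y^{(k)})=A(\mu_{i_k}(Y^{(k)}))=A(Y^{(k+1)})$ for each $k$, and composing these identities yields $A(Y)=A(\mu_{\rev}(Y))$.

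The only point that requires any care is interpretational: the objects $A(-)$ in Proposition \ref{preserve} are defined on the direct sum of the terms of an exceptional sequence, and a single mutation $\mu_{i_k}$ replaces exactly one summand $Y_{i_k}^{(k)}$ by its mutated partner $(Y_{i_k}^{(k)})^{\ast}$. The triangle realizing this replacement is (by definition of a negative mutation) of the form $Y_{i_k+1}^{(k),r}[j]\to (Y_{i_k}^{(k)})^{\ast}\to Y_{i_k}^{(k)}\to Y_{i_k+1}^{(k),r}[j+1]$ with $j<0$, which is precisely the triangle used in the proof of Proposition \ref{preserve}. There is no further obstacle.
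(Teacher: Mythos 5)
Your proof is correct and takes essentially the same approach as the paper, which states this as an immediate corollary of Proposition \ref{preserve} together with Lemma \ref{goodmut}: each mutation in the product $\mu_{\rev}$ (in the order \eqref{order}) applied to a silting exceptional sequence is negative, and negative mutations preserve $A(-)$, so the equality follows by telescoping.
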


\section*{Acknowledgements}

The third author would like to thank Drew Armstrong, Chris Brav, and
David Speyer
for helpful conversations.  We also thank the referee for his helpful
suggestions.  Much of the work on this paper was done during several
visits by the third author to NTNU.  He would like to thank his co-authors
and the members of the Institutt for matematiske fag for their
hospitality.

\end{document}